\documentclass[a4paper,oneside,10pt]{article}%
\usepackage{amsfonts}
\usepackage{amsmath,amssymb}
\usepackage{amsmath}
\usepackage{amssymb}
\usepackage{graphicx}
\usepackage{hyperref}
\usepackage{color}%
\setcounter{MaxMatrixCols}{30}
\providecommand{\U}[1]{\protect\rule{.1in}{.1in}}
\parskip=5pt
\textheight21cm
\parskip=5pt
\setlength{\textwidth}{165mm}
\setlength{\textheight}{220mm}
\headsep=15pt \topmargin=-5mm \oddsidemargin=-0.36cm
\evensidemargin=-0.36cm \raggedbottom
\newtheorem{theorem}{Theorem}[section]

\newtheorem{definition}[theorem]{Definition}
\newtheorem{example}[theorem]{Example}
\newtheorem{lemma}[theorem]{Lemma}

\newtheorem{proposition}[theorem]{Proposition}
\newtheorem{remark}[theorem]{Remark}
\numberwithin{equation}{section}

\newenvironment{proof}[1][Proof]{\noindent \textbf{#1.} }{\ \rule{0.5em}{0.5em}}

\def\b{\big}
\def\B{\Big}
\begin{document}

\title{Relationship between maximum principle and dynamic programming principle for
recursive optimal control problem of stochastic evolution equations}
\author{Ying Hu\thanks{Institut de Recherche Math\'ematique de Rennes, Universit\'e
Rennes 1, 35042 Rennes Cedex, France. ying.hu@univ-rennes1.fr. This author's
research is partially supported by Lebesgue Center of Mathematics
``Investissements d'avenir" Program (No. ANR-11-LABX-0020-01), by ANR CAESARS
(No. ANR-15-CE05-0024) and by ANR MFG (No. ANR-16-CE40-0015-01).}
\and Guomin Liu\thanks{School of Mathematical Sciences, Nankai University, Tianjin,
China. gmliu@nankai.edu.cn. Research supported by National Natural Science
Foundation of China (No. 12201315 and No. 12571479).}
\and Shanjian Tang\thanks{School of Mathematical Sciences, Fudan University,
Shanghai, China. sjtang@fudan.edu.cn. Research supported by National Key R\&D
Program of China (No. 2018YFA0703900) and National Natural Science Foundation
of China (No. 11631004 and No. 12031009).}}
\date{}
\maketitle

\begin{abstract}
This paper investigates the relationship between the maximum principle (MP)
and the dynamic programming principle (DPP) for the recursive optimal control
problem of stochastic evolution equations, allowing for nonconvex control domains and nonsmooth value functions. Using the notion of conditionally expected operator-valued backward stochastic integral equations, we establish the connection between the first- and
second-order adjoint processes in MP and the generalized derivatives of the value
function in DPP. Under certain additional assumptions, we also show that  the value function  is  $C^{1,1}$-regular. Furthermore,  the smooth case and  several applications illustrating our results are provided.

\medskip\noindent\textbf{Keywords. } Stochastic evolution equations, nonconvex
control domain, recursive optimal control, maximum principle, dynamic
programming principle. \smallskip

\noindent\textbf{AMS 2020 Subject Classifications.} 93E20, 60H15, 49K27.

\end{abstract}

\vspace{-10pt}

\section{Introduction}

Pontryagin's maximum principle (MP) and Bellman's dynamic programming
principle (DPP) are two fundamental approaches in  optimal control. While they are often studied separately, 
it is crucial
to understand their relationship, particularly the connection between the
adjoint processes in MP and the value function in DPP, which play central
roles in their respective theories.

The relationship between the MP and the DPP for controlled ordinary
differential equations was  given by Pontryagin et al.~\cite{PBGM62}, assuming that the value function is
continuously differentiable. By employing the notion of viscosity solutions, subsequent works extended this connection to nonsmooth value functions; see Barron and Jensen \cite{BJ86},
Clarke and Vinter \cite{CV87} and Zhou \cite{Zhou90}.  Cannarsa and
Frankowska \cite{CF92,CF96} and Cernea and Frankowska \cite{CF05} later obtained the corresponding results for control systems governed by partial differential equations.

In the stochastic setting, Peng \cite{Pe90} derived a general MP for controlled stochastic differential equations with possibly nonconvex control domains by introducing a second-order adjoint process governed by a matrix-valued backward stochastic differential equation (BSDE). The corresponding
relationship was obtained in the smooth case by Bensoussan \cite{Ben82} and
in the nonsmooth case by Zhou \cite{Zhou90-2,Zhou91}. For infinite
dimensional stochastic systems  with nonconvex control domain,
\cite{DM13,FHT13,LZ14} studied the MP, and Chen and L\"{u} \cite{CL23}
recently established the connection between the MP and the DPP for stochastic
evolution equations (SEEs), and Stannat and Wessels \cite{SW24} investigated the
connection for semilinear SPDEs.

The main objective of this paper is to establish the connection between the MP
and the DPP for the following controlled SEEs:
\begin{equation}
	\left\{
	\begin{aligned}
		dX(t)= &\,\, \big[A(t)X(t)+a(t,X(t),u(t))\big]dt + \big[B(t)X(t)+b(t,X(t),u(t))\big]dw(t),\quad t\in[0,T], \\
		X(0)= &\,\, x.
	\end{aligned}
	\right.
	\label{eq0-1}
\end{equation}
where $w(\cdot)$ is a  Brownian motion, $A$ and $B$ are unbounded
linear operators, $a$ and $b$ are nonlinear functions, and $u(\cdot)$ is a control
process taking values in a given metric space. The diffusion coefficient $b$ 
depends on the control variable, the control domain is not necessarily convex,
and the value function is not assumed to be smooth. The value of the cost functional at an admissible control $u$  is
defined by
\[
J(x;u(\cdot)):=Y(0),
\]
where $Y(\cdot)$ is the recursive utility subject to a BSDE:
\begin{equation}
Y(t)=h(X(T))+\int_{t}^{T}k(s,X(s),Y(s),Z(s),u(s))dr-\int_{t}^{T}%
Z(s)dw({s}),\text{\quad}t\in\lbrack 0,T]. \label{eq0-2}
\end{equation}
The notion of a recursive utility in continuous time was introduced by Duffie
and Epstein \cite{DE92} and further generalized by Peng \cite{Pe93} and El
Karoui et al.~\cite{KPQ97}. Stochastic recursive optimal control
problems have found important applications in mathematical economics,
mathematical finance and engineering (see, e.g., El Karoui et al. 
\cite{KPQ97}). When $k$ is invariant with $(y,z)$, we have by taking expectation on
both sides of (\ref{eq0-2}) that
\[
J(x;u(\cdot))=\mathbb{E}\big[h(X(T))+\int_{0}^{T}k(t,X(t),u(t))dt\big],
\]
and thus reduce  the problem to the conventional stochastic optimal control
problem studied in \cite{DM13,FHT13,LZ14,CL23,SW24}.

As for the stochastic recursive optimal control problems for finite
dimensional systems, Peng \cite{Pe93} first derived a local MP when the
control domain is convex. Recently, Hu \cite{Hu-17} obtained a general MP for
the stochastic recursive optimal control problem, solving a long-standing
open problem proposed by Peng \cite{Pe98}. On the
other hand, DPP for recursive control systems and the associated generalized Hamilton-Jacobi-Bellman (HJB) equations were developed in Peng \cite{Pe92} (see also \cite{YPFW97}). Concerning the connection between the MP and the DPP, Shi \cite{Shi10} and Shi and Yu \cite{SY13} investigated the local
case with convex control domain and smooth value function; while Nie et al.~\cite{NSW16} extended this to the local case with a convex control domain
in the viscosity solution framework. The general case when the domain of
the control is nonconvex was addressed by Nie et al.~\cite{NSW17}.

Recently,  the last two authors \cite{LT21} established the MP for the recursive
optimal control problem (\ref{eq0-2}) of SEEs (\ref{eq0-1}), where the control domain is a general metric
space (not necessarily convex). The present paper is a sequel, 
focusing on the connection between the MP and the DPP for the  recursive
control system (\ref{eq0-1}) and (\ref{eq0-2}). 
Our analysis builds on the notion of conditionally expected operator-valued backward stochastic integral equations (BSIEs) introduced in \cite{LT21}, which characterize the second-order adjoint processes in the MP for infinite-dimensional systems. A key component of our approach is the derivation of a  new It\^{o}'s formula for the second-order BSIE and the variational equations associated with perturbations of the initial state and time (see Theorems \ref{Myth3-7}, \ref{diff-space}, \ref{rel-time}), which plays a crucial role in establishing the  connection between MP and DPP.
Under additional
assumptions, we also obtain by a probabilistic argument the $C^{1,1}$-regularity of
the value function $V$, which offers further insights
on the first-order derivatives of $V$ and extends the  result  for conventional optimal control problems obtained in \cite{FSW23}; see Remarks \ref{Rm3-16} and     \ref{rm4-1}.

The rest of this paper is organized as follows. In Section 2, we recall the MP and derive the DPP
for recursive control problem of SEEs. In Section 3, we present the relationship between the MP and the  DPP  in the nonsmooth case. Section 4 is devoted to  the
$C^{1,1}$-regularity of the value function. We discuss the special smooth case and present two applications of results
in Section 4. In the appendix, some
technical results are proved.
\section{Preliminaries and formulation of the problem}

In this section, we first introduce the concept of conditionally
expected operator-valued BSIEs, which serve  as the second-adjoint equations for optimally
controlled SEEs. We then recall the MP  for the recursive optimal control problem of SEEs  and present the DPP.

\subsection{Conditional expected operator-valued BSIEs}
Consider a probability space $(\Omega,\mathcal{F},\mathbb{P})$ with a filtration $\mathbb{F}:=\{\mathcal{F}_{t}\}_{0\leq t\leq T}$ satisfying the usual
conditions, for fixed $T>0.$ 
We denote by $\Vert\cdot\Vert_{X}$ the norm on a Banach space $X$.
The space of all bounded linear operators from $X$ to another Banach space $Y$
is denoted by $\mathfrak{L}(X;Y)$, equipped with the operator norm, and we
write $\mathfrak{L}(X)$ when $X=Y$. Let $H$ be a separable
Hilbert space with inner product $\langle\cdot,\cdot\rangle$. 
We identify
$\mathfrak{L}(H;\mathbb{R})$ with $H.$ The adjoint of an operator $M$ is
denoted by $M^{\ast}$, $I_{d}$ denotes the identity operator on $H.$  

Denote by $\mathfrak{L}_{2}(H\times H;X)$ the  space of all bounded bilinear operators from $H\times H$ to $X$, equipped with the operator norm.
Recall that
\[
\mathfrak{L}_{2}(H\times H;X)=\mathfrak{L}(H;\mathfrak{L}(H;X))
\]
by identifying $\tilde{\varphi}\in\mathfrak{L}_{2}(H\times H;X)$ with
$\varphi\in\mathfrak{L}(H;\mathfrak{L}(H;X))$ through
$
\tilde{\varphi}(u,v):=\varphi(u)v,$ for $(u,v)\in H\times H,
$
In particular, we have
\[
\mathfrak{L}_{2}(H\times H):=\mathfrak{L}_{2}(H\times H;\mathbb{R}%
)=\mathfrak{L}(H;\mathfrak{L}(H;\mathbb{R}))=\mathfrak{L}(H;H)=\mathfrak{L}%
(H).
\]

For a sub-$\sigma$-algebra $\mathcal{G}$ of $\mathcal{F}$ and $\alpha
\geq1,$ we denote by $L^{\alpha}(\mathcal{G},H)$ the space of $H$-valued
$\mathcal{G}$-measurable mapping $y$ with norm $\Vert y\Vert_{L^{\alpha
}(\mathcal{G},H)}=\{{\mathbb{E}}[\Vert y\Vert_{H}^{\alpha}]\}^{\frac
{1}{\alpha}}$. Define $L_{\mathbb{F}}^{\alpha}(0,T;H)$ (resp. $L_{\mathbb{F}%
}^{2,\alpha}(0,T;H),$ $L_{\mathbb{F}}^{1,\alpha}(0,T;H)$) as the space of
$H$-valued progressively measurable processes $y(\cdot)$ with norm $\Vert
y\Vert_{L_{\mathbb{F}}^{\alpha}(0,T;H)}=\{{\mathbb{E}}[\int_{0}%
^{T}\Vert y(t)\Vert_{H}^{\alpha}{d}t]\}^{\frac{1}{\alpha}}$ (resp. $\Vert
y\Vert_{L_{\mathbb{F}}^{2,\alpha}(0,T;H)}=\{{\mathbb{E}}[(\int_{0}%
^{T}\Vert y(t)\Vert_{H}^{2}{d}t)^{\frac{\alpha}{2}}]\}^{\frac{1}{\alpha}}$,
$\Vert y\Vert_{L_{\mathbb{F}}^{1,\alpha}(0,T;H)}=\{{\mathbb{E}}%
[(\int_{0}^{T}\Vert y(t)\Vert_{H}{d}t)^{\alpha}]\}^{\frac{1}{\alpha}}$). We
write $L^{\alpha}(\mathcal{G})$, $L_{\mathbb{F}}^{\alpha}(0,T)$,
$L_{\mathbb{F}}^{2,\alpha}(0,T)$ and $L_{\mathbb{F}}^{1,\alpha}(0,T)$ for
$L^{\alpha}(\mathcal{G},\mathbb{R})$, $L_{\mathbb{F}}^{\alpha}(0,T;\mathbb{R}%
)$, $L_{\mathbb{F}}^{2,\alpha}(0,T;\mathbb{R})$ and $L_{\mathbb{F}}%
^{1,\alpha}(0,T;\mathbb{R})$, respectively. Note that
\begin{equation}\label{Myeq1-19}
	\mathfrak{L}_{2}(H\times
	H;L^{1}(\mathcal{G}))=\mathfrak{L}(H;\mathfrak{L}(H;L^{1}(\mathcal{G}))).
\end{equation}

An operator-valued random variable $Z:\Omega\rightarrow\mathfrak{L}(H)$ is called weakly $\mathcal{G}%
$-measurable if for each $(u,v)\in H\times H,$ $\langle Zu,v\rangle
:\Omega\rightarrow\mathbb{R}$ is $\mathcal{G}$-measurable. An operator-valued process
$Y:\Omega\times\lbrack0,T]\rightarrow\mathfrak{L}(H)$ is said to be weakly
progressively measurable if for each $(u,v)\in H\times H,$ the process
$\langle Yu,v\rangle:\Omega\times\lbrack0,T]\rightarrow$ $\mathbb{R}$ is
progressively measurable.
We denote by $L_{w}$ the weak $\sigma$-algebra on $\mathfrak{L}(H)$ generated by
all the sets in the form of
\[
\big\{z\in\mathfrak{L}(H):\langle zu,v\rangle\in A\big\},\quad u,v\in
H,\ A\in\mathcal{B}(\mathbb{R}).
\] 

We denote by $L_{w}^{\alpha}(\mathcal{G},\mathfrak{L}(H))$ the space of
$\mathfrak{L}(H)$-valued weakly $\mathcal{G}$-measurable mapping $F$ with norm
$\Vert F\Vert_{L_{w}^{\alpha}(\mathcal{G},\mathfrak{L}(H))}%
=\{{\mathbb{E}}[\Vert F\Vert_{\mathfrak{L}(H)}^{\alpha}]\}^{\frac
{1}{\alpha}}$. We define $L_{\mathbb{F},w}^{\alpha}(0,T;\mathfrak{L}(H))$
(resp. $L_{\mathbb{F},w}^{2,\alpha}(0,T;\mathfrak{L}(H))$) as the space of
$\mathfrak{L}(H)$-valued weakly progressively measurable processes $F(\cdot)$
with norm $\Vert F\Vert_{L_{\mathbb{F},w}^{\alpha}(0,T;\mathfrak{L}%
(H))}=\{{\mathbb{E}}[\int_{0}^{T}\Vert F(t)\Vert_{\mathfrak{L}%
(H)}^{\alpha}{d}t]\}^{\frac{1}{\alpha}}$ (resp. $\Vert F\Vert_{L_{\mathbb{F}%
,w}^{2,\alpha}(0,T;\mathfrak{L}(H))}=\{{\mathbb{E}}[(\int_{0}^{T}\Vert
F(t)\Vert_{\mathfrak{L}(H)}^{2}{d}t)^{\frac{\alpha}{2}}]\}^{\frac{1}{\alpha}}$).

For $Y\in\mathfrak{L}_{2}(H\times H;L^{1}(\mathcal{F})),$  its
conditional expectation $\mathbb{E}[Y|\mathcal{G}]$ with respect to
$\mathcal{G}$ is a weakly $\mathcal{G}$-measurable mapping $Z:\Omega\rightarrow\mathfrak{L}(H)$ satisfying
\[
\langle Zu,v\rangle=\mathbb{E}[Y(u,v)|\mathcal{G}],\quad P\text{-a.s., }\forall(u,v)\in
H\times H. \label{Myeq2-15}%
\]

\begin{theorem} [{\cite[Theorem 2.1]{LT21}}]
\label{Myth2-17} For $Y\in\mathfrak{L}_{2}(H\times H;L^{1}(\mathcal{F}))$, there exists $\mathbb{E}[Y|\mathcal{G}]\in L_{w}^{1}(\mathcal{G}%
,\mathfrak{L}(H))$ if and only if the mapping $(u,v)\longmapsto
\mathbb{E}[Y(u,v)|\mathcal{G}]\in\mathfrak{L}_{2}(H\times H;L^{1}%
(\mathcal{G}))$ satisfies the domination condition
\[
\big|\mathbb{E}[Y(u,v)|\mathcal{G}]\big|\leq g\Vert u\Vert_{H}\Vert v\Vert
_{H},\quad P\text{-a.s.},\ \forall(u,v)\in H\times H, \label{Myeq2-29}%
\]
for some $0\leq g\in L^{1}(\mathcal{G})$. Moreover, $\mathbb{E}[Y|\mathcal{G}%
]$ is unique (up to $P$-a.s. equality) and satisfies
\[
\big\Vert\mathbb{E}[Y|\mathcal{G}]\big\Vert_{\mathfrak{L}(H)}\leq
g,\quad P\text{-a.s.} \label{Myeq2-11}%
\]

\end{theorem}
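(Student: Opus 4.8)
The plan is to treat the two implications separately, concentrating all the real work in the sufficiency direction. For necessity, suppose $Z:=\mathbb{E}[Y|\mathcal{G}]\in L_w^1(\mathcal{G},\mathfrak{L}(H))$ exists. Then for every $(u,v)$ the defining identity gives $|\mathbb{E}[Y(u,v)|\mathcal{G}]|=|\langle Zu,v\rangle|\le\|Z\|_{\mathfrak{L}(H)}\|u\|_H\|v\|_H$ $P$-a.s., so the domination holds with $g:=\|Z\|_{\mathfrak{L}(H)}$, which lies in $L^1(\mathcal{G})$ by the very definition of $L_w^1(\mathcal{G},\mathfrak{L}(H))$.

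For sufficiency, the issue is to manufacture a single operator-valued map out of the family $\{\mathbb{E}[Y(u,v)|\mathcal{G}]\}_{(u,v)\in H\times H}$, each member of which is only defined up to a $P$-null set that may depend on $(u,v)$. I would exploit the separability of $H$: fix a countable $\mathbb{Q}$-linear subspace $D=\{d_n\}\subset H$ that is dense (e.g. the rational span of an orthonormal basis), and set $\xi_{mn}:=\mathbb{E}[Y(d_m,d_n)|\mathcal{G}]\in L^1(\mathcal{G})$. Since $Y$ is bilinear into $L^1(\mathcal{F})$ and conditional expectation is linear, every $\mathbb{Q}$-bilinearity relation among the $\{\xi_{mn}\}$ holds $P$-a.s.; together with the countably many domination inequalities $|\xi_{mn}|\le g\|d_m\|_H\|d_n\|_H$, all of these statements hold simultaneously off a single $\mathcal{G}$-measurable set $\Omega_0$ with $P(\Omega_0)=1$. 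For each $\omega\in\Omega_0$ the map $(d_m,d_n)\mapsto\xi_{mn}(\omega)$ is a $\mathbb{Q}$-bilinear form on $D$ that is Lipschitz with constant $g(\omega)$; by density it extends uniquely to a bounded bilinear form on $H\times H$ of norm at most $g(\omega)$, and under the identification $\mathfrak{L}_2(H\times H)=\mathfrak{L}(H)$ this yields an operator $Z(\omega)\in\mathfrak{L}(H)$ with $\|Z(\omega)\|_{\mathfrak{L}(H)}\le g(\omega)$. On the complement of $\Omega_0$ I would set $Z:=0$.

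It then remains to verify the three required properties. For weak $\mathcal{G}$-measurability, fix $(u,v)$ and choose $d_{m_k}\to u$, $d_{n_k}\to v$ in $D$; the bound $\|Z(\omega)\|_{\mathfrak{L}(H)}\le g(\omega)$ forces $\langle Z(\omega)u,v\rangle=\lim_k\xi_{m_kn_k}(\omega)$ pointwise on $\Omega_0$, exhibiting $\langle Zu,v\rangle$ as an a.s. limit of $\mathcal{G}$-measurable functions. For the defining identity, the same approximation gives $\langle Zu,v\rangle=\lim_k\mathbb{E}[Y(d_{m_k},d_{n_k})|\mathcal{G}]$ a.s., while continuity of $Y$ into $L^1(\mathcal{F})$ and of conditional expectation on $L^1$ give $\mathbb{E}[Y(d_{m_k},d_{n_k})|\mathcal{G}]\to\mathbb{E}[Y(u,v)|\mathcal{G}]$ in $L^1$, whence $\langle Zu,v\rangle=\mathbb{E}[Y(u,v)|\mathcal{G}]$ $P$-a.s. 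For the norm bound, $\|Z(\cdot)\|_{\mathfrak{L}(H)}=\sup\{|\langle Zu,v\rangle|:u,v\in D,\ \|u\|_H,\|v\|_H\le1\}$ is $\mathcal{G}$-measurable as a countable supremum and is dominated by $g\in L^1(\mathcal{G})$, so indeed $\|\mathbb{E}[Y|\mathcal{G}]\|_{\mathfrak{L}(H)}\le g$ and $Z\in L_w^1(\mathcal{G},\mathfrak{L}(H))$. Uniqueness follows because any two admissible versions agree on the countable set $D\times D$ off a common null set and hence everywhere by continuity.

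The crux is the passage from ``for every $(u,v)$, $P$-a.s.'' to ``$P$-a.s., for every $(u,v)$''. Separability reduces the uncountable family to a countable one, but this alone only produces, for a.e. $\omega$, a densely defined bilinear form that a priori need not be bounded. The domination hypothesis is exactly what supplies a uniform (in $(u,v)$) Lipschitz constant $g(\omega)<\infty$ for a.e. $\omega$, so that the form extends to a genuine element of $\mathfrak{L}(H)$; this is where the hypothesis is indispensable and is the main point to handle carefully.
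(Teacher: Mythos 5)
The paper itself gives no proof of this statement --- it is imported verbatim as \cite[Theorem 2.1]{LT21} --- so the only comparison available is with the standard argument, which is exactly what you give: necessity via the operator-norm bound, and sufficiency by fixing a countable $\mathbb{Q}$-linear dense subset, collecting the countably many a.s.\ bilinearity relations and domination inequalities onto one full-measure set, extending the resulting bounded $\mathbb{Q}$-bilinear form to an element of $\mathfrak{L}(H)$ of norm at most $g(\omega)$, and then verifying weak $\mathcal{G}$-measurability, the defining identity (matching the pointwise limit against the $L^{1}$-limit through the contractivity of conditional expectation), and uniqueness on $D\times D$. Your proof is correct, and you correctly identify the quantifier interchange as the crux and the domination hypothesis as what makes the densely defined form extend to a genuine bounded operator.
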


We define a stochastic evolution operator on $H$ as a family of mappings
\[
\big\{L(t,s)\in\mathfrak{L}(L^{2}(\mathcal{F}_{t},H);L^{2}(\mathcal{F}%
_{s},H)):(t,s)\in\Delta\big\}
\]
with $\Delta=\{(t,s):0\leq t\leq s\leq T\}$. For each $(t,s)\in\Delta,$ the
formal adjoint $L^{\ast}(t,s)$ of $L(t,s)$ is defined for $u\in L^{1}%
(\mathcal{F}_{s},H)$ by%
\[
\big(L^{\ast}(t,s)u\big)(v):=\langle u,L(t,s)v\rangle\ \ P\text{-a.s.}%
,\quad\forall v\in L^{2}(\mathcal{F}_{t},H).
\]

We consider a conditionally expected $\mathfrak{L}(H)$-valued BSIE
\begin{equation}
P(t)=\mathbb{E}\Big[L^{\ast}(t,T)\xi L(t,T)+\int_{t}^{T}L^{\ast}%
(t,s)f(s,P(s))L(t,s)ds\Big|\mathcal{F}_{t}\Big],\quad t\in\lbrack0,T],
\label{Myeq2-1}%
\end{equation}
where $\xi$, $f$ and $L$ satisfy the following assumptions:

\begin{description}
\item[$(H1)$] There exists some constant $\Lambda\geq0$ such that for each
$(t,s)\in\Delta$ and $u\in L^{4}(\mathcal{F}_{t},H),$
\[
\mathbb{E}\big[\left\Vert L(t,s)u\right\Vert _{H}^{4}\big|\mathcal{F}%
_{t}\big]\leq\Lambda\Vert u\Vert_{H}^{4},\quad P\text{-a.s}.,
\]
and the mapping $(\omega,t,s)\mapsto(L(t,s)u)(\omega)$ is jointly measurable.

\item[$(H2)$] $\xi\in L_{w}^{2}(\mathcal{F}_{T},\mathfrak{L}(H))$; the
function $f(\omega,t,p):\Omega\times\lbrack0,T]\times\mathfrak{L}(H)\rightarrow
\mathfrak{L}(H)$ is $\mathcal{P}\otimes L_{w}/L_{w}$-measurable, Lipschitz continuous in
$p$ with some constant $\lambda\geq0$; $f(\cdot,\cdot,0)\in L_{\mathbb{F},w}%
^{2}(0,T;\mathfrak{L}(H)).$
\end{description}

For any $\eta\in L_{w}^{2}(\mathcal{F}_{s},\mathfrak{L}(H))$, we have $L^{\ast}(t,s)\eta L(t,s)\in\mathfrak{L}%
(H;\mathfrak{L}(H;L^{1}(\mathcal{F}_{s})))=\mathfrak{L}_{2}(H\times
H;L^{1}(\mathcal{F}_{s}))$ and we can write $(L^{\ast}(t,s)\eta
L(t,s)u)(v)=L^{\ast}(t,s)\eta L(t,s)(u,v)$, for
$(u,v)\in H\times H$.
For $g\in\mathfrak{L}_{2}(H\times H;L_{\mathbb{F}}^{1}(t,T)),$ we define its
weak integral $\int_{t}^{T}g(s)ds$ by%
\[
\Big(\int_{t}^{T}g(s)ds\Big)(u,v):=\int_{t}^{T}g(s)(u,v)ds\ \ P\text{-a.s}%
,\quad\forall(u,v)\in H\times H,
\]
and have $\int_{t}^{T}g(s)ds\in\mathfrak{L}_{2}(H\times H;L^{1}(\mathcal{F}_{T}))$.
Then for $h\in L_{\mathbb{F},w}^{2}(t,T;\mathfrak{L}(H))$, we derive $[t,T]\ni
s\mapsto L^{\ast}(t,s)h(s)L(t,s)\in\mathfrak{L}_{2}(H\times H;L_{\mathbb{F}%
}^{1}(t,T))$ and $\int_{t}^{T}L^{\ast}(t,s)h(s)L(t,s)ds\in\mathfrak{L}%
_{2}(H\times H;L^{1}(\mathcal{F}_{T}))$. Given any $P\in L_{\mathbb{F},w}%
^{2}(0,T;\mathfrak{L}(H))$, we have $f(\cdot,P(\cdot))\in L_{\mathbb{F},w}%
^{2}(0,T;\mathfrak{L}(H))$, so
\[
L^{\ast}(t,T)\xi L(t,T)+\int_{t}^{T}L^{\ast}(t,s)f(s,P(s))L(t,s)ds\in
\mathfrak{L}_{2}(H\times H;L^{1}(\mathcal{F}_{T})). \label{Myeq2-12}%
\]
By a solution of (\ref{Myeq2-1}), we mean a process $P\in L_{\mathbb{F},w}%
^{2}(0,T;\mathfrak{L}(H))$ satisfying, for each $t\in\lbrack0,T],$
\[
P(t)=\mathbb{E}\Big[L^{\ast}(t,T)\xi L(t,T)+\int_{t}^{T}L^{\ast}%
(t,s)f(s,P(s))L(t,s)ds\Big|\mathcal{F}_{t}\Big],\quad P\text{-a.s}.
\label{Myeq2-6}%
\]

In the following, the constant $C$  may vary from line to line.
\begin{theorem}[{\cite[Theorem 2.7]{LT21}}]
\label{Mainthm-1} Under assumptions $(H1)$ and $(H2)$, BSIE (\ref{Myeq2-1}) has a
unique solution $P$, with, for each $t\in
\lbrack0,T],$
\begin{equation}
\Vert P(t)\Vert_{\mathfrak{L}(H)}^{2}\leq C\mathbb{E}\Big[\Vert\xi
\Vert_{\mathfrak{L}(H)}^{2}+\int_{t}^{T}\Vert f(s,0)\Vert_{\mathfrak{L}%
(H)}^{2}ds\Big|\mathcal{F}_{t}\Big],\quad P\text{-a.s}., \label{Myeq3-23}%
\end{equation}
for some constant $C$ depending on $\Lambda$ and $\lambda$.
\end{theorem}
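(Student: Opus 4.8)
The plan is to realize the solution as the fixed point of the map $\Phi$ sending $P\in L_{\mathbb{F},w}^{2}(0,T;\mathfrak{L}(H))$ to
\[
\Phi(P)(t):=\mathbb{E}\Big[L^{\ast}(t,T)\xi L(t,T)+\int_{t}^{T}L^{\ast}(t,s)f(s,P(s))L(t,s)ds\Big|\mathcal{F}_{t}\Big],
\]
and to read the a priori bound off the same operator-norm estimates used to control $\Phi$. The computation I would use repeatedly is the following scalar domination estimate: for $\eta\in L_{w}^{2}(\mathcal{F}_{s},\mathfrak{L}(H))$ and $(u,v)\in H\times H$ one has $|\langle\eta L(t,s)u,L(t,s)v\rangle|\leq\|\eta\|_{\mathfrak{L}(H)}\|L(t,s)u\|_{H}\|L(t,s)v\|_{H}$, so by the conditional Cauchy--Schwarz inequality and $(H1)$,
\[
\big|\mathbb{E}[\langle\eta L(t,s)u,L(t,s)v\rangle\,|\,\mathcal{F}_{t}]\big|\leq\Lambda^{1/2}\,\mathbb{E}[\|\eta\|_{\mathfrak{L}(H)}^{2}\,|\,\mathcal{F}_{t}]^{1/2}\,\|u\|_{H}\|v\|_{H}.
\]
Applied to $\eta=\xi$ with $s=T$ and, after integrating in $s$, to $\eta=f(s,P(s))$, this verifies the domination hypothesis of Theorem \ref{Myth2-17}, so that $\Phi(P)(t)\in L_{w}^{2}(\mathcal{F}_{t},\mathfrak{L}(H))$ with
\[
\|\Phi(P)(t)\|_{\mathfrak{L}(H)}\leq\Lambda^{1/2}\Big(\mathbb{E}[\|\xi\|_{\mathfrak{L}(H)}^{2}|\mathcal{F}_{t}]^{1/2}+\int_{t}^{T}\mathbb{E}[\|f(s,P(s))\|_{\mathfrak{L}(H)}^{2}|\mathcal{F}_{t}]^{1/2}ds\Big).
\]
Together with the joint measurability in $(H1)$, a routine argument gives weak progressive measurability of $\Phi(P)(\cdot)$, and, using $\xi\in L_{w}^{2}$, $f(\cdot,0)\in L_{\mathbb{F},w}^{2}$ and $\|f(s,P(s))\|_{\mathfrak{L}(H)}\leq\|f(s,0)\|_{\mathfrak{L}(H)}+\lambda\|P(s)\|_{\mathfrak{L}(H)}$, that $\Phi$ maps $L_{\mathbb{F},w}^{2}(0,T;\mathfrak{L}(H))$ into itself.

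For existence and uniqueness I would run the Picard iteration $P^{0}=0$, $P^{n+1}=\Phi(P^{n})$. Subtracting two consecutive equations and applying the displayed domination estimate to $\eta=f(s,P^{n}(s))-f(s,P^{n-1}(s))$, the Lipschitz property of $f$ and the Cauchy--Schwarz inequality in $s$ yield the pathwise bound
\[
\|P^{n+1}(t)-P^{n}(t)\|_{\mathfrak{L}(H)}^{2}\leq\lambda^{2}\Lambda(T-t)\int_{t}^{T}\mathbb{E}\big[\|P^{n}(s)-P^{n-1}(s)\|_{\mathfrak{L}(H)}^{2}\,\big|\,\mathcal{F}_{t}\big]ds.
\]
Taking expectations and writing $a_{n}(t):=\mathbb{E}\|P^{n+1}(t)-P^{n}(t)\|_{\mathfrak{L}(H)}^{2}$, the tower property gives $a_{n}(t)\leq\lambda^{2}\Lambda T\int_{t}^{T}a_{n-1}(s)ds$, and iterating produces $a_{n}(t)\leq C^{n}(T-t)^{n}/n!$ up to a finite constant. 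Hence $\{P^{n}\}$ is Cauchy in $L_{\mathbb{F},w}^{2}(0,T;\mathfrak{L}(H))$; its limit $P$ satisfies $P=\Phi(P)$ by passing to the limit in the contraction estimate, and the same estimate applied to the difference of two solutions forces uniqueness.

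For the a priori estimate I would fix $t$, apply the operator-norm bound above to $\Phi(P)=P$, split $f(s,P(s))=f(s,0)+[f(s,P(s))-f(s,0)]$ and use $\|f(s,P(s))\|_{\mathfrak{L}(H)}^{2}\leq2\|f(s,0)\|_{\mathfrak{L}(H)}^{2}+2\lambda^{2}\|P(s)\|_{\mathfrak{L}(H)}^{2}$. Squaring, using Cauchy--Schwarz in $s$ and then conditioning on $\mathcal{F}_{t}$ via the tower property (valid since $t\leq s$), one obtains, for $\rho(s):=\mathbb{E}[\|P(s)\|_{\mathfrak{L}(H)}^{2}|\mathcal{F}_{t}]$,
\[
\rho(s)\leq C\,\mathbb{E}\Big[\|\xi\|_{\mathfrak{L}(H)}^{2}+\int_{t}^{T}\|f(r,0)\|_{\mathfrak{L}(H)}^{2}dr\,\Big|\,\mathcal{F}_{t}\Big]+C\int_{s}^{T}\rho(r)dr,\qquad s\in[t,T].
\]
A backward Gronwall inequality then bounds $\rho(s)$ by a constant multiple of the first term, and evaluating at $s=t$, where $\rho(t)=\|P(t)\|_{\mathfrak{L}(H)}^{2}$ because $\|P(t)\|_{\mathfrak{L}(H)}$ is $\mathcal{F}_{t}$-measurable (by separability of $H$), yields (\ref{Myeq3-23}). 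The main obstacle throughout is the bookkeeping of the conditional expectations attached to the moving filtration $\mathcal{F}_{t}$: one must consistently reduce every operator-norm estimate to the scalar domination condition of Theorem \ref{Myth2-17}, and use the tower property to convert the $\mathcal{F}_{s}$-conditioning hidden inside $\|P(s)\|_{\mathfrak{L}(H)}$ into $\mathcal{F}_{t}$-conditioning before invoking Gronwall.
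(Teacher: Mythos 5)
Your argument is correct and is exactly the standard route one would expect: the paper itself does not prove Theorem \ref{Mainthm-1} but quotes it from \cite[Theorem 2.7]{LT21}, and your Picard iteration, with existence of each conditional expectation secured by verifying the domination condition of Theorem \ref{Myth2-17} via the chain $|\mathbb{E}[\langle\eta L(t,s)u,L(t,s)v\rangle|\mathcal{F}_{t}]|\leq\Lambda^{1/2}\mathbb{E}[\Vert\eta\Vert_{\mathfrak{L}(H)}^{2}|\mathcal{F}_{t}]^{1/2}\Vert u\Vert_{H}\Vert v\Vert_{H}$ (two applications of conditional Cauchy--Schwarz plus $(H1)$), is the natural implementation. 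The only points deserving an extra line in a full write-up are routine: completeness of $L_{\mathbb{F},w}^{2}(0,T;\mathfrak{L}(H))$ and identification of the limit as a weakly progressively measurable process (pass to an a.e.-convergent subsequence and use separability of $H$ to see that the operator norm is a countable supremum of measurable functions); upgrading $P=\Phi(P)$ from a.e.~$t$ to every $t$ by redefining $P(t):=\Phi(P)(t)$; and, in the backward Gronwall step, arranging the inequality $\rho(s)\leq A+C\int_{s}^{T}\rho(r)dr$ to hold for all $s$ on a single full-measure event (e.g.\ by first testing against nonnegative bounded $\mathcal{F}_{t}$-measurable random variables), which is precisely the conditional-expectation bookkeeping you flag at the end.
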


The proof of the following result is similar to that of Proposition 2.11 in \cite{LT21},  and thus is omitted.
\begin{proposition}
\label{Myth2-7-2} Let $\alpha\geq1.$ Suppose $(H1)$, $(H2)$ and
\begin{description}
\item[$(H3)$] $(\xi,f(\cdot,\cdot,0))\in L_{w}^{2\alpha}(\mathcal{F}%
_{T},\mathfrak{L}(H))\times L_{\mathbb{F},w}^{2,2\alpha}(0,T;\mathfrak{L}%
(H)),$ and there exists a constant $\Lambda_{\alpha}\geq0$ such that for each
$0\leq t\leq r\leq s\leq T$ and $u\in L^{4\alpha}(\mathcal{F}_{t},H),$ it holds
$L(t,s)=L(t,r)L(r,s)$,
\[
\mathbb{E}[\left\Vert L(t,s)u\right\Vert _{H}^{4\alpha}|\mathcal{F}_{t}%
]\leq\Lambda_{\alpha}\Vert u\Vert_{H}^{4\alpha}\ P\text{-a.s.}\ \text{and
}[t,T]\ni s\mapsto L(t,s)u\text{ is strongly continuous in }L^{4\alpha
}(\mathcal{F}_{T},H)\text{.}%
\]
\end{description}
Let $P$ be the solution to BSIE (\ref{Myeq2-1}). Then, for each $t\in
\lbrack0,T)$ and $u,v\in L^{4\alpha}(\mathcal{F}_{t},H),$
\[
\lim_{\delta\downarrow0}\mathbb{E}\big[|\langle P(t+\delta)u,v\rangle
-\langle P(t)u,v\rangle|^{\alpha}\big|\mathcal{F}_{t}%
\big]=0,\quad P\text{-a.s.}%
\]
\end{proposition}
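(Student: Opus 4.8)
The plan is to convert the fixed-point characterization of $P$ into a local comparison between $P(t+\delta)$ and $P(t)$ and then read off the continuity. For $r\in\{t,t+\delta\}$ I set
\[
\Xi(r):=\langle\xi L(r,T)u,L(r,T)v\rangle+\int_r^T\langle f(s,P(s))L(r,s)u,L(r,s)v\rangle\,ds,
\]
so that pairing the operator BSIE (\ref{Myeq2-1}) with $u,v$ and using $L^{\ast}(r,s)\eta L(r,s)(u,v)=\langle\eta L(r,s)u,L(r,s)v\rangle$ gives $\langle P(r)u,v\rangle=\mathbb{E}[\Xi(r)\,|\,\mathcal{F}_r]$; since $u,v\in L^{4\alpha}(\mathcal{F}_t,H)\subset L^{4\alpha}(\mathcal{F}_{t+\delta},H)$, both pairings make sense. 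Adding and subtracting $\mathbb{E}[\Xi(t)\,|\,\mathcal{F}_{t+\delta}]$ I would write
\[
\langle P(t+\delta)u,v\rangle-\langle P(t)u,v\rangle=\underbrace{\mathbb{E}[\Xi(t+\delta)-\Xi(t)\,|\,\mathcal{F}_{t+\delta}]}_{T_1(\delta)}+\underbrace{\big(\mathbb{E}[\Xi(t)\,|\,\mathcal{F}_{t+\delta}]-\mathbb{E}[\Xi(t)\,|\,\mathcal{F}_t]\big)}_{T_2(\delta)},
\]
and bound $\mathbb{E}[|\cdot|^\alpha|\mathcal{F}_t]^{1/\alpha}$ of each summand by the conditional Minkowski inequality.

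For $T_2$ the point is that $\Xi(t)$ is a \emph{fixed} element of $L^\alpha(\mathcal{F}_T)$ (by Hölder, using $\xi\in L_{w}^{2\alpha}(\mathcal{F}_T,\mathfrak{L}(H))$, the $4\alpha$-moment bound on $L$ from $(H3)$, and the a priori estimate (\ref{Myeq3-23}) together with conditional Jensen to control $\mathbb{E}[\Vert f(s,P(s))\Vert_{\mathfrak{L}(H)}^{2\alpha}|\mathcal{F}_t]$), while $\delta\mapsto\mathbb{E}[\Xi(t)|\mathcal{F}_{t+\delta}]$ is a backward martingale. Since $\mathbb{F}$ is right-continuous, $\bigcap_{\delta>0}\mathcal{F}_{t+\delta}=\mathcal{F}_t$, so this martingale converges $P$-a.s.\ and in $L^\alpha(\Omega)$ to $\mathbb{E}[\Xi(t)|\mathcal{F}_t]$ and $T_2(\delta)\to0$. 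To pass to the limit inside $\mathbb{E}[\,\cdot\,|\mathcal{F}_t]$ over the continuum $\delta\downarrow0$, I would dominate by the decreasing window $S(\varepsilon):=\sup_{0<\delta\le\varepsilon}|T_2(\delta)|$, use Doob's maximal inequality for an integrable dominating function, and then apply conditional monotone convergence as $\varepsilon\downarrow0$ to conclude $\mathbb{E}[|T_2(\delta)|^\alpha|\mathcal{F}_t]\to0$ $P$-a.s.

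For $T_1$, conditional Jensen gives $\mathbb{E}[|T_1(\delta)|^\alpha|\mathcal{F}_t]\le\mathbb{E}[|\Xi(t+\delta)-\Xi(t)|^\alpha|\mathcal{F}_t]$, and here the semigroup property $L(t,s)=L(t,t+\delta)L(t+\delta,s)$ of $(H3)$ is essential: writing $\tilde u:=L(t,t+\delta)u$, $\tilde v:=L(t,t+\delta)v$, it yields $L(t,s)u=L(t+\delta,s)\tilde u$ for $s\ge t+\delta$, so $\Xi(t+\delta)-\Xi(t)$ splits into (i) a terminal part and a running part over $[t+\delta,T]$, each linear in the increments $u-\tilde u$ and $v-\tilde v$, and (ii) the running integral over the shrinking interval $[t,t+\delta]$. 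By the conditional $4\alpha$-moment bound on $L$, conditional Hölder and the above bound on $f(\cdot,P(\cdot))$, part (ii) is controlled by $\delta^{\alpha-1}\int_t^{t+\delta}\Phi(s)\,ds$ with $\Phi$ an $\mathcal{F}_t$-a.s.\ integrable density, which tends to $0$ $P$-a.s.\ by absolute continuity of the Lebesgue integral; and part (i) is controlled by $\rho(\delta):=\mathbb{E}[\Vert L(t,t+\delta)u-u\Vert_H^{4\alpha}|\mathcal{F}_t]^{1/4}+\mathbb{E}[\Vert L(t,t+\delta)v-v\Vert_H^{4\alpha}|\mathcal{F}_t]^{1/4}$.

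The hard part will be showing $\rho(\delta)\to0$ $P$-a.s. The strong continuity in $(H3)$ makes $s\mapsto L(t,s)u$ continuous into $L^{4\alpha}(\mathcal{F}_T,H)$, hence yields only $\mathbb{E}[\Vert L(t,t+\delta)u-u\Vert_H^{4\alpha}]\to0$, i.e.\ $L^1(\Omega)$-convergence of $\rho(\delta)^4$; upgrading this to $P$-a.s.\ convergence of the conditional expectation along the continuum is the delicate point, since $L^1(\Omega)$-convergence alone does not force pointwise convergence over an uncountable parameter. I would resolve it exactly as for $T_2$: bound $\mathbb{E}[\Vert L(t,t+\delta)u-u\Vert_H^{4\alpha}|\mathcal{F}_t]$ for $\delta\le\varepsilon$ by the decreasing window $\mathbb{E}[\sup_{0<\delta\le\varepsilon}\Vert L(t,t+\delta)u-u\Vert_H^{4\alpha}|\mathcal{F}_t]$ and let $\varepsilon\downarrow0$ via conditional monotone convergence, which reduces matters to the uniform modulus $\mathbb{E}[\sup_{0<\delta\le\varepsilon}\Vert L(t,t+\delta)u-u\Vert_H^{4\alpha}]\to0$. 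This sup-over-window continuity is where the $P$-a.s.\ nature of the conclusion is genuinely won; it follows from the $P$-a.s.\ right-continuity of the trajectory $\delta\mapsto L(t,t+\delta)u$ at $0$ together with the integrable domination furnished by the $4\alpha$-moment bound in $(H3)$, and is the main technical obstacle of the argument.
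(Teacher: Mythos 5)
The paper does not actually contain a proof of this proposition --- it states that ``the proof is similar to that of Proposition 2.11 in [LT21] and thus is omitted'' --- so there is no in-paper argument to compare against line by line. Your architecture is nonetheless the natural one and almost certainly matches the omitted proof: represent $\langle P(r)u,v\rangle=\mathbb{E}[\Xi(r)\,|\,\mathcal{F}_r]$, split the increment into a backward-martingale term $T_2$ and an integrand-increment term $T_1$, use the flow property $L(t,s)=L(t,t+\delta)L(t+\delta,s)$ together with conditional H\"older and the $4\alpha$-moment bound to reduce $T_1$ to the quantity $\mathbb{E}[\Vert L(t,t+\delta)u-u\Vert_H^{4\alpha}\,|\,\mathcal{F}_t]$ plus a shrinking-interval integral, and control $f(\cdot,P(\cdot))$ via the a priori estimate (\ref{Myeq3-23}). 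You also correctly identify where the difficulty sits. (Two small points you should still write out: the identity $\langle P(r)u,v\rangle=\mathbb{E}[\Xi(r)|\mathcal{F}_r]$ is only \emph{defined} for deterministic $(u,v)\in H\times H$ and must be extended to random $\mathcal{F}_r$-measurable arguments by bilinearity and simple-function approximation; and for $\alpha=1$ the Doob-maximal-function domination of $T_2$ fails for merely $L^1$-integrable $\Xi(t)$, so you need to truncate $\Xi(t)$ first and treat the remainder by conditional Jensen.)

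The genuine gap is in your final step. You close the argument by invoking ``the $P$-a.s.\ right-continuity of the trajectory $\delta\mapsto L(t,t+\delta)u$ at $0$'' so as to run a sup-over-window domination and conditional dominated convergence. But $(H3)$ does not give you pathwise continuity: it only asserts that $s\mapsto L(t,s)u$ is continuous as a map into the Banach space $L^{4\alpha}(\mathcal{F}_T,H)$, i.e.\ $\mathbb{E}[\Vert L(t,t+\delta)u-u\Vert_H^{4\alpha}]\to0$. Norm convergence of this family to $0$ does not, by itself, yield $P$-a.s.\ convergence of the conditional expectations $\mathbb{E}[\Vert L(t,t+\delta)u-u\Vert_H^{4\alpha}\,|\,\mathcal{F}_t]$ along the continuum $\delta\downarrow0$, and the random variable $\sup_{0<\delta\le\varepsilon}\Vert L(t,t+\delta)u-u\Vert_H^{4\alpha}$ need not even be measurable in the abstract setting. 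So as written the proof does not follow from $(H1)$--$(H3)$ alone. What saves the argument in the situation the authors care about is that for the concrete realization $L=L_{A,B}$ the trajectories $s\mapsto L_{A,B}(t,s)u$ \emph{are} $P$-a.s.\ continuous in $H$ and Lemma \ref{apriori-see} supplies the conditional integrable dominant $\mathbb{E}[\sup_{s\in[t,T]}\Vert\cdot\Vert_H^{4\alpha}|\mathcal{F}_t]$; indeed Remark \ref{Rm3-6} runs exactly the pathwise-continuity-plus-conditional-DCT device you describe. You should therefore either add pathwise right-continuity (or the a.s.\ convergence of the conditional moduli) as an explicit hypothesis alongside $(H3)$, or restrict the final step to the stochastic-evolution-operator realization where it is available.
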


We next present a typical example arising from linear stochastic evolution equations for $L(t,s)$.
Let $\mathcal{V}$ be a separable Hilbert space densely embedded in $H$, with dual space ${\mathcal{V}}^{\ast}:=\mathfrak{L}({\mathcal{V}};\mathbb{R})$.
Then $\mathcal{V}\subset H\subset\mathcal{V}^{\ast}$ form a Gelfand triple.
We denote by $\langle\cdot,\cdot\rangle_{\ast}$ the duality pairing between $\mathcal{V}^{\ast}$ and $\mathcal{V}$.
Let $w=\{w(t)\}_{t\in\lbrack0,T]}$ be a one-dimensional Brownian motion on $(\Omega,\mathcal{F},P)$. We remark that,  we restrict our discussion to the one-dimensional Brownian motion case for simplicity of presentation. Nevertheless, all results in this paper remain valid, after straightforward modifications, when $w$ is a $K$-valued cylindrical $Q$-Wiener process for some separable Hilbert space $K$ and a nonnegative self-adjoint operator $Q\in\mathfrak{L}(K)$, with stochastic integrands taking values in the space of Hilbert-Schmidt operators from $K$ to $H$; see, e.g., \cite{lsw25,lr15}.

In the following, we always assume that the filtration $\mathbb{F}%
=(\mathcal{F}_{t})_{0\leq t\leq T}$ is the augmented natural filtration of
$w$.

Consider the following linear homogeneous SEE on $[t,T]$:
\begin{equation}
	\left\{
	\begin{aligned}
		du^{t,u_{0}}(s)= &\,\, A(s)u^{t,u_{0}}(s)\,ds 
		+ B(s)u^{t,u_{0}}(s)\,dw(s),\quad s\in[t,T], \\
		u^{t,u_{0}}(t)= &\,\, u_{0}.
	\end{aligned}
	\right.
	\label{Myeq2-17}
\end{equation}
where $u_{0}\in L^{2}(\mathcal{F}_{t},H)$ and $(A,B):[0,T]\times
\Omega\rightarrow\mathfrak{L}(\mathcal{V};\mathcal{V}^{\ast}\times
H).$ 

We make the following assumption.
\begin{description}
\item[$(H4)$] For each $u\in\mathcal{V},$ $A(t,\omega)u$ and $B(t,\omega)u$
are progressively measurable and satisfy the following conditions for some constants
$\delta>0$ and $K_1\geq0$: for each
$t,\omega$ and $u\in\mathcal{V}$,

\begin{description}
\item[\rm{{(i)}}] Coercivity condition:
\[
2\langle A(t,\omega)u,u\rangle_{\ast}+\Vert B(t,\omega)u\Vert_H^{2}\leq-\delta\Vert u\Vert_{\mathcal{V}}^{2}+K_1\Vert u\Vert_{H}%
^{2}\quad\text{and}\quad\Vert A(t,\omega)u\Vert_{\mathcal{V}^{\ast}}\leq
K_1\Vert u\Vert_{\mathcal{V}};
\]
\item[\rm{{(ii)}}] Quasi-skew-symmetry condition: 
\[     
\b| \langle B(t,\omega)u,u\rangle\b|
\leq K_1\Vert u\Vert_{H}^{2}.
\]
\end{description}
\end{description}

We define a stochastic evolution operator $L_{A,B}$ as:
\begin{equation}
L_{A,B}(t,s)(u_{0}):=u^{t,u_{0}}(s)\in L^{2}(\mathcal{F}_{s},H),\quad
\text{for}\ t\leq s\leq T\ \text{and}\ u_{0}\in L^{2}(\mathcal{F}%
_{t},H)\text{.} \label{Myeq1-5}%
\end{equation}
From Theorem \ref{Mainthm-1}, the $\mathfrak{L}(H)$-valued BSIE
\[
P(t)=\mathbb{E}\Big[L_{A,B}^{\ast}(t,T)\xi L_{A,B}(t,T)+\int_{t}^{T}%
L_{A,B}^{\ast}(t,s)f(s,P(s))L_{A,B}(t,s)ds\Big|\mathcal{F}_{t}\Big],\quad
t\in\lbrack0,T], \label{Myeq3-2}%
\]
has a unique solution $P\in L_{\mathbb{F},w}^{2}(0,T;\mathfrak{L}(H))$.

\subsection{A priori estimates for SEEs}

Consider SEE
\begin{equation}
	\left\{
	\begin{aligned}
		dz(s)= &\,\, \big[A(s)z(s)+\tilde{a}(s,z(s))\big]ds  + \big[B(s)z(s)+\tilde{b}(s,z(s))\big]dw(s),\quad s\in[t,T], \\
		z(t)= &\,\, z_{0}.
	\end{aligned}
	\right.
	\label{Eq6-1}
\end{equation}
We have the following estimates depending on whether $\tilde{a}$ takes values in
$\mathcal{V}^{\ast}$ or in $H,$ whose proofs are deferred to   the appendix.
\begin{lemma}
\label{apriori-see} Assume $(H4)$  and $z_{0}\in
L^{2\alpha}(\mathcal{F}_{t},H),$  for  $\alpha\geq1$.
\begin{itemize}
\item[(i)] Suppose $(\tilde{a},\tilde{b}):[0,T]\times\Omega\times H\rightarrow
H\times H$ satisfies:    $\tilde{a}%
(\cdot,\cdot,z)$, $\tilde{b}(\cdot,\cdot,z)$ are progressively measurable for each $z\in H,$  
there exists a constant $L>0$ such that for all $(t,\omega)\in
\lbrack0,T]\times\Omega$ and $z,z^{\prime}\in H$,
\[
\Vert\tilde{a}(t,z)-\tilde{a}(t,z^{\prime})\Vert_{H}+\Vert\tilde
{b}(t,z)-\tilde{b}(t,z^{\prime})\Vert_H\leq L\Vert
z-z^{\prime}\Vert_{H},
\]
$\tilde{a}(\cdot,\cdot,0)\in L_{\mathbb{F}}^{1,2\alpha}(t,T;H)$, $\tilde
{b}(\cdot,\cdot,0)\in L_{\mathbb{F}}^{2,2\alpha}(t,T;H)$.
Then there exists a constant $C>0$ depending on $\delta$, $K_1$, $\alpha$ and $L$
such that  for
$t\in\lbrack0,T],$
\begin{align*}
&  \mathbb{E}\Big[\sup_{s\in\lbrack t,T]}\left\Vert z(s)\right\Vert _{H}^{2\alpha
}\Big|\mathcal{F}_{t}\Big]+\mathbb{E}\Big[\Big(\int_{t}^{T}\Vert z(s)\Vert
_{\mathcal{V}}^{2}\,{d}s\Big)^{\alpha}\Big|\mathcal{F}_{t}\Big]\\
&  \leq C\Big\{\Vert z_{0}\Vert_{H}^{2\alpha}+\mathbb{E}\Big[\Big(\int_{t}^{T}\Vert
\tilde{a}(s,0)\Vert_{H}ds\Big)^{2\alpha}+\Big(\int_{t}^{T}\Vert\tilde{b}%
(s,0)\Vert_H^{2}ds\Big)^{\alpha}\Big|\mathcal{F}_{t}\Big]\Big\},\quad P\text{-a.s.};
\end{align*}

\item[(ii)]Suppose $(\tilde{a},\tilde{b}):[0,T]\times\Omega\times H\rightarrow
\mathcal{V}^{\ast}\times H$ satisfies: 
$\tilde{a}(\cdot,\cdot,z)$, $\tilde{b}(\cdot,\cdot,z)$ are progressively
measurable for each $z\in H,$ there exists a constant $L>0$ such that for all 
$(t,\omega)\in\lbrack0,T]\times\Omega$ and $z,z^{\prime}\in H$,
\[
\Vert\tilde{a}(t,z)-\tilde{a}(t,z^{\prime})\Vert_{\mathcal{V}^{\ast}}%
+\Vert\tilde{b}(t,z)-\tilde{b}(t,z^{\prime})\Vert_H\leq
L\Vert z-z^{\prime}\Vert_{H},
\]
$\tilde{a}(\cdot,\cdot,0)\in L_{\mathbb{F}}^{2,2\alpha}(t,T;\mathcal{V}^{\ast
})$, $\tilde{b}(\cdot,\cdot,0)\in L_{\mathbb{F}}^{2,2\alpha}(t,T;H)$. Then there is a constant $C>0$ depending on   $\delta$, $K_1$, $\alpha$ and $L$ such that  for
$t\in\lbrack0,T],$
\begin{align*}
&  \mathbb{E}\Big[\sup_{s\in\lbrack t,T]}\left\Vert z(s)\right\Vert
_{H}^{2\alpha}\Big|\mathcal{F}_{t}\Big]+\mathbb{E}\Big[\Big(\int_{t}^{T}\Vert
z(s)\Vert_{\mathcal{V}}^{2}\,{d}s\Big)^{\alpha}\Big|\mathcal{F}_{t}\Big]\\
&  \leq C\Big\{\Vert z_{0}\Vert_{H}^{2\alpha}+\mathbb{E}\Big[\Big(\int_{t}%
^{T}\Vert\tilde{a}(s,0)\Vert_{\mathcal{V}^{\ast}}^{2}ds\Big)^{\alpha
}+\Big(\int_{t}^{T}\Vert\tilde{b}(s,0)\Vert_H%
^{2}ds\Big)^{\alpha}\Big|\mathcal{F}_{t}\Big]\Big\},\quad P\text{-a.s.}
\end{align*}

\end{itemize}
\end{lemma}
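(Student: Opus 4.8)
The plan is to prove both assertions by applying the It\^{o} formula in the Gelfand triple $\mathcal{V}\subset H\subset\mathcal{V}^{\ast}$ to the process $\Vert z(s)\Vert_{H}^{2}$, then raising to the power $\alpha$, taking the conditional expectation given $\mathcal{F}_{t}$, and closing the estimate via the Burkholder--Davis--Gundy (BDG) and Gronwall inequalities. I would treat assertion (ii) first, since it is the more delicate case, and then indicate the modification needed for (i). First I would write the It\^{o} formula
\[
\Vert z(s)\Vert_{H}^{2}=\Vert z_{0}\Vert_{H}^{2}+\int_{t}^{s}\Big[2\langle A(r)z(r)+\tilde{a}(r,z(r)),z(r)\rangle_{\ast}+\Vert B(r)z(r)+\tilde{b}(r,z(r))\Vert_{\mathcal{L}_{2}^{0}}^{2}\Big]dr+M(s),
\]
where $M(s):=2\int_{t}^{s}\langle z(r),(B(r)z(r)+\tilde{b}(r,z(r)))dw(r)\rangle$ is a local martingale. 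Using the coercivity condition $(H4)(\mathrm{i})$ to extract the good term $-\delta\Vert z(r)\Vert_{\mathcal{V}}^{2}$, and controlling the cross terms $2\langle\tilde{a},z\rangle_{\ast}$ and $2\langle B z,\tilde{b}\rangle_{\mathcal{L}_{2}^{0}}$ by Young's inequality (absorbing a fraction of $\Vert z\Vert_{\mathcal{V}}^{2}$ on the left and invoking the Lipschitz bounds on $\tilde{a},\tilde{b}$), I would obtain the pathwise differential inequality
\[
\Vert z(s)\Vert_{H}^{2}+\tfrac{\delta}{2}\int_{t}^{s}\Vert z(r)\Vert_{\mathcal{V}}^{2}\,dr\leq\Vert z_{0}\Vert_{H}^{2}+C\int_{t}^{s}\Vert z(r)\Vert_{H}^{2}\,dr+\Phi(s)+M(s),
\]
where $\Phi(s):=C\int_{t}^{s}(\Vert\tilde{a}(r,0)\Vert_{\mathcal{V}^{\ast}}^{2}+\Vert\tilde{b}(r,0)\Vert_{\mathcal{L}_{2}^{0}}^{2})\,dr$ collects the inhomogeneous source terms.

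The main obstacle will be the martingale term $M$. Taking the supremum over $s\in[t,T]$, raising to the power $\alpha$, and applying BDG conditionally, the quadratic-variation bound reads
\[
\mathbb{E}\Big[\sup_{s\in[t,T]}|M(s)|^{\alpha}\Big|\mathcal{F}_{t}\Big]\leq C\,\mathbb{E}\Big[\Big(\int_{t}^{T}\big\Vert\langle[B(r)z(r)+\tilde{b}(r,z(r))]\cdot,z(r)\rangle\big\Vert_{\mathcal{L}_{2}^{0}(K,\mathbb{R})}^{2}\,dr\Big)^{\alpha/2}\Big|\mathcal{F}_{t}\Big].
\]
The subtle point is that the contribution of the unbounded operator $B$ cannot be estimated by Cauchy--Schwarz alone, since that would produce $\Vert z\Vert_{H}^{2}\Vert B(r)z(r)\Vert_{\mathcal{L}_{2}^{0}}^{2}$, in which $\Vert B(r)z(r)\Vert_{\mathcal{L}_{2}^{0}}^{2}$ is controlled only by $\Vert z\Vert_{\mathcal{V}}^{2}$ and hence cannot be pulled out of the integral. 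This is exactly where the quasi-skew-symmetry condition $(H4)(\mathrm{ii})$ is essential: it yields $\Vert\langle[B(r)z(r)]\cdot,z(r)\rangle\Vert_{\mathcal{L}_{2}^{0}(K,\mathbb{R})}^{2}\leq K^{2}\Vert z(r)\Vert_{H}^{4}$, so the $B$-contribution to $\langle M\rangle$ is dominated by $\Vert z\Vert_{H}^{4}$ rather than $\Vert z\Vert_{H}^{2}\Vert z\Vert_{\mathcal{V}}^{2}$, while the $\tilde{b}$-contribution is bounded by $\Vert z\Vert_{H}^{2}\Vert\tilde{b}\Vert_{\mathcal{L}_{2}^{0}}^{2}$.

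After this reduction I would factor out $\sup_{s}\Vert z(s)\Vert_{H}^{\alpha}$ from the BDG bound, apply Young's inequality to absorb a small multiple of $\mathbb{E}[\sup_{s}\Vert z(s)\Vert_{H}^{2\alpha}|\mathcal{F}_{t}]$ into the left-hand side, and then, after using H\"{o}lder's inequality on the term $C\int_{t}^{s}\Vert z(r)\Vert_{H}^{2}\,dr$, invoke Gronwall's inequality applied to the (a.s. finite) function $s\mapsto\mathbb{E}[\sup_{r\in[t,s]}\Vert z(r)\Vert_{H}^{2\alpha}|\mathcal{F}_{t}]$ to obtain the first term on the left of the stated bound; the coercivity term $\tfrac{\delta}{2}\int_{t}^{s}\Vert z\Vert_{\mathcal{V}}^{2}\,dr$ then gives the second. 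Finally, for assertion (i), the only change is in the treatment of the drift cross term: since now $\tilde{a}$ takes values in $H$, I would estimate $2\langle\tilde{a}(r,z(r)),z(r)\rangle_{H}\leq 2\Vert\tilde{a}(r,z(r))\Vert_{H}\,\Vert z(r)\Vert_{H}$ and, after taking the supremum, bound its time integral by $2\big(\int_{t}^{T}\Vert\tilde{a}(r,0)\Vert_{H}\,dr+K\int_{t}^{T}\Vert z(r)\Vert_{H}\,dr\big)\sup_{s}\Vert z(s)\Vert_{H}$; a further application of Young's inequality then produces the $\big(\int_{t}^{T}\Vert\tilde{a}(r,0)\Vert_{H}\,dr\big)^{2\alpha}$ term appearing in the $L_{\mathbb{F}}^{1,2\alpha}$ norm, with all remaining steps identical to those above.
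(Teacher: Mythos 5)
Your proposal is correct and follows essentially the same route as the paper: coercivity to extract the good $\Vert z\Vert_{\mathcal{V}}^{2}$ term, the quasi-skew-symmetry condition to bound the quadratic variation of the stochastic integral by $\Vert z\Vert_{H}^{4}$ (plus the $\tilde b$ contribution) rather than $\Vert z\Vert_{H}^{2}\Vert z\Vert_{\mathcal{V}}^{2}$, and then BDG with Young's inequality to absorb a small multiple of $\sup_{s}\Vert z(s)\Vert_{H}^{2\alpha}$ into the left-hand side. The only cosmetic difference is that the paper applies It\^{o}'s formula directly to $e^{-\gamma s}\Vert z(s)\Vert_{H}^{2\alpha}$ and absorbs the $\int_{t}^{T}\Vert z\Vert_{H}^{2\alpha}\,ds$ term by taking $\gamma$ large, whereas you raise the $\alpha=1$ inequality to the power $\alpha$ and close with Gronwall's inequality; both are standard and equivalent here.
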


The following is a continuity estimate.
\begin{lemma}
\label{conti-est-1} Suppose $(H4)$, the conditions in Lemma \ref{apriori-see} (ii),
 $\tilde{a}%
(\cdot,\cdot,0)$ and $\tilde{b}(\cdot,\cdot,0)$ are bounded, and  $z_{0}\in L^{2\alpha}(\mathcal{F}_{t},\mathcal{V})$. Then there exists a constant $C>0$ depending on $\delta$, $K_1$, $\alpha$ and $L$  such that for
$t\in\lbrack0,T)$ and $\rho\leq T-t,$
\[
\mathbb{E}\Big[\sup_{t\leq s\leq t+\rho}\left\Vert z(s)-z_{0}\right\Vert
_{H}^{2\alpha}\Big|\mathcal{F}_{t}\Big]\leq C\big(1+\left\Vert z_{0}%
\right\Vert _{\mathcal{V}}^{2\alpha}\big)\rho^{\alpha},\quad P\text{-a.s.}
\]
\end{lemma}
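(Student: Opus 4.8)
The plan is to avoid a direct energy estimate on $\|z(s)-z_{0}\|_{H}$ (where the stochastic term $\int_{t}^{s}B(r)z(r)\,dw(r)$ would obstruct extraction of the $\rho^{\alpha}$ rate) and instead to read the claim off from the a priori bound of Lemma~\ref{apriori-see}(ii) applied to the shifted process $v(\cdot):=z(\cdot)-z_{0}$. Subtracting the time-independent $\mathcal{V}$-valued random variable $z_{0}$ from (\ref{Eq6-1}) shows that $v$ solves, on $[t,t+\rho]$,
\[
\begin{cases}
dv(s)=\big[A(s)v(s)+\hat{a}(s,v(s))\big]ds+\big[B(s)v(s)+\hat{b}(s,v(s))\big]dw(s),\\
v(t)=0,
\end{cases}
\]
with $\hat{a}(s,v):=A(s)z_{0}+\tilde{a}(s,v+z_{0})$ and $\hat{b}(s,v):=B(s)z_{0}+\tilde{b}(s,v+z_{0})$. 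Because $z_{0}\in\mathcal{V}$, assumption $(H4)$ guarantees $A(s)z_{0}\in\mathcal{V}^{\ast}$ and $B(s)z_{0}\in\mathcal{L}_{2}^{0}$, so $\hat{a}$ is $\mathcal{V}^{\ast}$-valued; this is exactly why part (ii) of Lemma~\ref{apriori-see}, and not the $H$-valued part (i), is the right tool, since $A(s)z_{0}$ need not belong to $H$.

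Next I would verify that $(\hat{a},\hat{b})$ meets the hypotheses of Lemma~\ref{apriori-see}(ii). Progressive measurability is inherited from $A,B,\tilde{a},\tilde{b}$, and the $H$-Lipschitz continuity of $\tilde{a},\tilde{b}$ (the conditions in (i)) together with the Gelfand embeddings $\mathcal{V}\hookrightarrow H\hookrightarrow\mathcal{V}^{\ast}$ gives $\|\hat{a}(s,v)-\hat{a}(s,v')\|_{\mathcal{V}^{\ast}}+\|\hat{b}(s,v)-\hat{b}(s,v')\|_{\mathcal{L}_{2}^{0}}\leq CK\|v-v'\|_{H}$. The crucial observation concerns the free terms $\hat{a}(s,0)=A(s)z_{0}+\tilde{a}(s,z_{0})$ and $\hat{b}(s,0)=B(s)z_{0}+\tilde{b}(s,z_{0})$: by the coercivity condition in $(H4)$(i) one has $\|A(s)z_{0}\|_{\mathcal{V}^{\ast}}\leq K\|z_{0}\|_{\mathcal{V}}$ and
\[
\|B(s)z_{0}\|_{\mathcal{L}_{2}^{0}}^{2}\leq K\|z_{0}\|_{H}^{2}-2\langle A(s)z_{0},z_{0}\rangle_{\ast}\leq K\|z_{0}\|_{H}^{2}+2K\|z_{0}\|_{\mathcal{V}}^{2}\leq C\|z_{0}\|_{\mathcal{V}}^{2};
\]
combined with the assumed boundedness of $\tilde{a}(\cdot,\cdot,0)$ and $\tilde{b}(\cdot,\cdot,0)$, this yields the uniform (in $(s,\omega)$) bounds $\|\hat{a}(s,0)\|_{\mathcal{V}^{\ast}}+\|\hat{b}(s,0)\|_{\mathcal{L}_{2}^{0}}\leq C(1+\|z_{0}\|_{\mathcal{V}})$, and $z_{0}\in L^{2\alpha}(\mathcal{F}_{t},\mathcal{V})$ ensures the requisite integrability.

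Finally I would apply Lemma~\ref{apriori-see}(ii) to $v$ on $[t,t+\rho]$, which is legitimate since the restriction to $[t,t+\rho]$ of the solution of (\ref{Eq6-1}) solves the corresponding equation there. With $v(t)=0$ the estimate reads
\[
\mathbb{E}\Big[\sup_{t\leq s\leq t+\rho}\|v(s)\|_{H}^{2\alpha}\Big|\mathcal{F}_{t}\Big]\leq C\,\mathbb{E}\Big[\Big(\int_{t}^{t+\rho}\|\hat{a}(s,0)\|_{\mathcal{V}^{\ast}}^{2}ds\Big)^{\alpha}+\Big(\int_{t}^{t+\rho}\|\hat{b}(s,0)\|_{\mathcal{L}_{2}^{0}}^{2}ds\Big)^{\alpha}\Big|\mathcal{F}_{t}\Big].
\]
Since the two integrands are bounded by $C(1+\|z_{0}\|_{\mathcal{V}}^{2})$, each time integral over the length-$\rho$ interval is at most $C(1+\|z_{0}\|_{\mathcal{V}}^{2})\rho$, and raising to the power $\alpha$ bounds the right-hand side by $C(1+\|z_{0}\|_{\mathcal{V}}^{2\alpha})\rho^{\alpha}$, which is precisely the claimed estimate.

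The analytic heart of the argument, namely the energy/Burkholder--Davis--Gundy estimate in which the unbounded diffusion coefficient $B$ is controlled against the coercivity, is entirely delegated to Lemma~\ref{apriori-see}; consequently the $\rho^{\alpha}$ rate is produced for free by the short-time integration of the \emph{bounded} free terms $\hat{a}(\cdot,0),\hat{b}(\cdot,0)$. The only points requiring care are therefore bookkeeping ones: recording that Lemma~\ref{apriori-see}(ii) holds verbatim on the subinterval $[t,t+\rho]$ (its proof never uses the particular terminal time), and extracting the operator bound for $B(s,\omega)$ from the coercivity condition, since $(H4)$ states an explicit bound only for $A(s,\omega)$. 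I expect this last extraction, together with the verification that $\hat{a}(\cdot,0)$ genuinely lands in $\mathcal{V}^{\ast}$ rather than merely in $H$, to be where the hypothesis $z_{0}\in\mathcal{V}$ is truly used.
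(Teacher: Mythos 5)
Your proposal is correct and follows essentially the same route as the paper's proof: shift by $z_{0}$, absorb $A(\cdot)z_{0}$ and $B(\cdot)z_{0}$ into $\mathcal{V}^{\ast}$- and $\mathcal{L}_{2}^{0}$-valued free terms, and apply Lemma \ref{apriori-see}(ii) on $[t,t+\rho]$ with zero initial data so that the $\rho^{\alpha}$ rate comes from integrating bounded free terms. Your extra care in extracting the bound $\Vert B(s)z_{0}\Vert_{\mathcal{L}_{2}^{0}}\leq C\Vert z_{0}\Vert_{\mathcal{V}}$ from the coercivity condition is a detail the paper leaves implicit, and it is done correctly.
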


\begin{proof}
We denote
$\hat{z}(s):=z(s)-z_{0},\ s\in\lbrack t,T].$
On $[t,T],$ we have
\begin{align*}
\hat{z}(s)  &  =\int_{t}^{s}\b[A(r)z(r)+\tilde{a}(r,z(r))\b]{d}r+\int_{t}%
^{s}\b[B(r)z(r)+\tilde{b}(r,z(r))\b]{d}w(r)\\
&  =\int_{t}^{s}\b[A(r)\hat{z}(r)+Az_{0}+\tilde{a}(r,\hat{z}(r)+z_{0})\b]{d}%
r+\int_{t}^{s}\b[B(r)\hat{z}(r)+Bz_{0}+\tilde{b}(r,\hat{z}(r)+z_{0})\b]{d}w(r).
\end{align*}
Then from  Lemma \ref{apriori-see} (ii),
\begin{align*}
\mathbb{E}\Big[\sup_{t\leq s\leq t+\rho}\left\Vert \hat{z}(s)\right\Vert
_{H}^{2\alpha}ds\Big|\mathcal{F}_{t}\Big]  &  \leq C\mathbb{E}\Big[\Big (\int%
_{t}^{t+\rho}\Vert A(r)z_{0}+\tilde{a}(r,z_{0})\Vert_{\mathcal{V}^{\ast}}%
^{2}{dr}\Big)^{\alpha}\Big|\mathcal{F}_{t}\Big]\\
& \ \ \ +C\mathbb{E}\Big[\Big (\int_{t}^{t+\rho}\Vert B(r)z_{0}+\tilde{b}%
(r,z_{0})\Vert_H^{2}{dr}\Big)^{\alpha}\Big|\mathcal{F}%
_{t}\Big]\\
&  \leq C\rho^{\alpha-1}\int_{t}^{t+\rho}(1+\left\Vert z_{0}\right\Vert
_{\mathcal{V}}^{2\alpha}){d}r\\
&  = C(1+\left\Vert z_{0}\right\Vert _{\mathcal{V}}^{2\alpha})\rho^{\alpha
}, \quad P\text{-a.s.},
\end{align*}
which completes the proof.
\end{proof}
\begin{remark}
\label{Rm3-6}
Due to the infinite-dimensional setting, establishing such an estimate appears difficult when $z_{0} \in L^{2\alpha}(\mathcal{F}_{t},H)$. In this situation, one can only show that
\[
\mathbb{E}\Big[\sup_{t\leq s\leq t+\rho}\left\Vert z(s)-z_{0}\right\Vert
_{H}^{2\alpha}\Big|\mathcal{F}_{t}\Big]\rightarrow0, \quad \text{as
}\rho\downarrow0, \ P\text{-a.s.},
\]
which follows directly from the continuity of $z$ in  $s$ under the
$H$-norm  together with the dominated convergence theorem.
\end{remark}

\subsection{Stochastic maximum principle}
Consider the controlled SEE:
\begin{equation}\label{State-1}
	\left\{
	\begin{aligned}
		dX(t)= &\,\, \big[A(t)X(t)+a(t,X(t),u(t))\big]dt + \big[B(t)X(t)+b(t,X(t),u(t))\big]dw(t),  \quad t\in [0,T], \\[4pt]
		X(0)= &\,\, x.
	\end{aligned}
	\right.
\end{equation}
where $x\in H$, $(A,B):[0,T]\rightarrow\mathfrak{L}(\mathcal{V}%
;\mathcal{V}^{\ast}\times H)$ are unbounded linear
operators satisfying the coercivity and quasi-skew-symmetry condition $(H4)$,
while $(a,b):[0,T]\times H\times U\rightarrow H\times H$
are nonlinear functions. The cost functional is defined as
\[
J(x;u(\cdot)):=Y(0),
\]
where $Y$ is the recursive utility governed by a BSDE:%
\begin{equation}
Y(t)=h(X(T))+\int_{t}^{T}k(s,X(s),Y(s),Z(s),u(s))ds-\int_{t}^{T}Z(s)dw({s}),\quad 0\leq t\leq T,
\label{utibsde}%
\end{equation}
with $k:[0,T]\times H\times\mathbb{R}\times\mathbb{R}\times U\rightarrow\mathbb{R}$ and $h:H\rightarrow\mathbb{R}$. The
control domain $U$ is a separable metric space with distance ${d}(\cdot
,\cdot)$  and length $|u|_{U}:={d}(u,0)$ for any fixed $0\in U$. The admissible control set is
\[
\mathcal{U}[0,T]:=\Big\{u:[0,T]\times\Omega\rightarrow U\ \text{is
progressively measurable and}\ {\mathbb{E}}\Big[\int_{0}^{T}%
|u(t)|_{U}^{\alpha}{d}t\Big]<\infty,\text{ for each }\alpha\geq1\Big\}.
\]
For $t\in (0,T),$ we define similarly $\mathcal{U}[t,T].$

The objective of the optimal control problem $(S_{x})$ is to find an admissible control $\bar{u}(\cdot)\in\mathcal{U}[0,T]$ that minimizes the cost functional
$J(x;u(\cdot))$:
\[
J(x;\bar{u}(\cdot))=\inf_{u(\cdot)\in\mathcal{U}[0,T]}J(x;u(\cdot)).
\]

We assume the following conditions for $a$, $b$, $h$ and $k$.

\begin{description}
\item[$(H5)$] $a$, $b$, $h$, $k$ are twice Fr\'{e}chet differentiable with
respect to $(x,y,z)$; $a$, $b$, $k$, $a_{x}$, $b_{x}$, $Dk$, $a_{xx}$,
$b_{xx}$, $D^{2}k$ are continuous in $(x,y,z,u)$, where $Dk$ and $D^{2}k$ are
the Gradient and Hessian of $k$ with respect to $(x,y,z)$,
respectively; $a_{x}$, $b_{x}$, $Dk$, $a_{xx}$, $b_{xx}$, $D^{2}k$, $h_{xx}$
are bounded; $a$, $b$ are bounded by $L_1(1+\Vert x\Vert_{H}+|u|_{U})$ and $k$
is bounded by $L_2(1+\Vert x\Vert_{H}+|y|+| z|+|u|_{U})$, for some constants $L_1,L_2>0$.
\end{description}

For $\psi=a,b,a_{x},b_{x},a_{xx}%
,b_{xx}$ and $v\in U$, define
\[
\bar{\psi}(t):=\psi(t,\bar{X}(t),\bar{u}(t)),\ \delta\psi(t;v):=\psi
(t,\bar{X}(t),v)-\bar{\psi}(t),\  \bar{A}:=A+\bar{a}_{x},\ \bar{B}:=B+\bar{b}_{x}.
\]
To derive the maximum principle, we introduce the following first-order $H$-valued adjoint backward stochastic
evolution equation (BSEE for short):
\begin{equation}\label{adjoint1}
	\left\{
	\begin{aligned}
		-dp(t)=&\,\, \Big\{ \big[\bar{A}^{\ast}(t)+k_{y}(t)+k_{z}(t)\bar{B}^{\ast}(t)\big]p(t)
		+ \big[\bar{B}^{\ast}(t)+k_{z}(t)\big]q(t) + k_{x}(t) \Big\} dt \\
		&\quad - q(t)\,dw(t), \\[4pt]
		p(T)=&\,\, h_{x}(\bar{X}(T)),
	\end{aligned}
	\right.
\end{equation}
and the following second-order $\mathfrak{L}(H)$-valued adjoint BSIE:
\begin{equation}
P(t)=\mathbb{E}\Big[\tilde{L}^{\ast}(t,T)h_{xx}(\bar{X}(T))\tilde{L}%
(t,T)+\int_{t}^{T}\tilde{L}^{\ast}(t,s)(k_{y}(s)P(s)+G(s))\tilde
{L}(t,s)ds\Big|\mathcal{F}_{t}\Big],\quad0\leq t\leq T, \label{adjoint2}%
\end{equation}
where%
\begin{align*}
&  \phi(t):=\phi(t,\bar{X}(t),\bar{Y}(t),\bar{Z}(t),\bar{u}(t)),\quad
\text{for}\ \phi=k_{x},k_{y,},k_{z},D^{2}k,\\
&  \tilde{L}(t,s):=L_{\tilde{A},\tilde{B}}(t,s),\quad\text{for}\ \ \tilde
{A}(s):=\bar{A}(s)+\frac{k_{z}(s)}{2}\bar{B}(s)-\frac{(k_{z}(s))^{2}}{8}%
I_{d}\ \ \text{and}\ \ \tilde{B}(s):=\bar{B}(s)+\frac{k_{z}(s)}{2}I_{d},\\
&  G(t):=D^{2}k(t)\Big(\big[I_{d},p(t),\bar{B}^{\ast}%
(t)p(t)+q(t)\big],\big[I_{d},p(t),\bar{B}^{\ast}(t)p(t)+q(t)\big]\Big)+\langle
p(t),\bar{a}_{xx}(t)\rangle\\
&  \ \ \ \ \ \ \ \ \ \ +k_{z}(t){\langle p(t),\bar{b}_{xx}(t)\rangle}
+\langle q(t),\bar{b}_{xx}(t)\rangle.
\end{align*}

\begin{theorem}[{\cite[Theorem 3.2]{LT21}}]
\label{SMP} Suppose $(H4)$ and $(H5)$. Let $\bar{X}(\cdot)\in L_{\mathbb{F}%
}^{2}(0,T;V)$ and $(\bar{Y}(\cdot),\bar{Z}(\cdot))\in L_{\mathbb{F}}%
^{2}(0,T;\mathbb{R}\times \mathbb{R})$ be the
solutions of SEE (\ref{SEE1-1}) and BSDE (\ref{BSDE1-2}) corresponding to the
optimal control $\bar{u}(\cdot)$. Let $(p,q)\in L_{\mathbb{F}}^{2}%
(0,T;V\times H)$ and $P\in L_{\mathbb{F},w}^{2}%
(0,T;\mathfrak{L}(H))$ be the solutions of BSEE (\ref{adjoint1}) and BSIE
(\ref{adjoint2}), respectively. Then
\begin{equation}%
\begin{split}
&  \inf_{v\in U}\Big\{\mathcal{H}\big(t,\bar{X}(t),\bar{Y}(t),\bar
{Z}(t),v,p(t),q(t)\big)-\mathcal{H}\big(t,\bar{X}(t),\bar{Y}(t),\bar
{Z}(t),\bar{u}(t),p(t),q(t)\big)\\
&  \ \ \ \ \ \,+\frac{1}{2}\big\langle P(t)(b(t,\bar{X}(t),v)-b(t,\bar
{x}(t),\bar{u}(t))),b(t,\bar{X}(t),v)-b(t,\bar{X}(t),\bar{u}%
(t))\big\rangle\Big\}=0,\quad P\text{-a.s. a.e.,}%
\end{split}
\label{MP}%
\end{equation}
where the Hamiltonian
\begin{align*}
\mathcal{H}(t,x,y,z,v,p,q)  &  :=\langle p,a(t,x,v)\rangle+\langle
q,b(t,x,v)\rangle+k(t,x,y,z+{\langle
p,b(t,x,v)-b(t,\bar{X}(t),\bar{u}(t))\rangle},v),\\
&  \quad \quad\quad  (t,x,y,z,v,p,q)\in
\lbrack0,T]\times H\times\mathbb{R}\times \mathbb{R}\times U\times
H\times H.
\end{align*}

\end{theorem}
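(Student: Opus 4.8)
The plan is to adapt Peng's spike-variation scheme to the recursive, infinite-dimensional setting, combining a second-order Taylor expansion of the state with a duality analysis based on the two adjoint processes. Fix $v\in U$ and $t_{0}\in\lbrack0,T)$, and for small $\varepsilon>0$ introduce the needle variation
\[
u^{\varepsilon}(s):=v\,\mathbf{1}_{[t_{0},t_{0}+\varepsilon]}(s)+\bar{u}(s)\,\mathbf{1}_{[0,T]\setminus[t_{0},t_{0}+\varepsilon]}(s),
\]
and let $(X^{\varepsilon},Y^{\varepsilon},Z^{\varepsilon})$ be the associated state and recursive utility. First I would introduce the first- and second-order variational SEEs: the first-order correction $X_{1}$ is driven at order $\sqrt{\varepsilon}$ by the diffusion spike $\delta b(\cdot;v)$, while the second-order correction $X_{2}$ absorbs the drift spike $\delta a(\cdot;v)$ together with the quadratic diffusion effect and the Hessian terms $\bar{a}_{xx}$, $\bar{b}_{xx}$. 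Applying Lemma \ref{apriori-see} with $\alpha=1$, and with $\alpha>1$ for the higher moments needed to control the quadratic terms, I would establish $\mathbb{E}[\sup_{s\in[t_{0},T]}\Vert X_{1}(s)\Vert_{H}^{2}]=O(\varepsilon)$, $\mathbb{E}[\sup_{s\in[t_{0},T]}\Vert X_{2}(s)\Vert_{H}^{2}]=O(\varepsilon^{2})$, and the remainder bound $\mathbb{E}[\sup_{s\in[t_{0},T]}\Vert X^{\varepsilon}-\bar{X}-X_{1}-X_{2}\Vert_{H}^{2}]=o(\varepsilon^{2})$.

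Next I would expand the recursive cost. Since $J(x;u^{\varepsilon})=Y^{\varepsilon}(0)$, optimality of $\bar{u}$ gives $Y^{\varepsilon}(0)-\bar{Y}(0)\geq0$. Writing $Y^{\varepsilon}-\bar{Y}=Y_{1}+Y_{2}+o(\varepsilon)$ and expanding the driver $k$ to second order along $(\bar{X},\bar{Y},\bar{Z},\bar{u})$, the pairs $(Y_{1},Z_{1})$ and $(Y_{2},Z_{2})$ solve linear variational BSDEs whose generators are the linearizations of $k$, with the spike of $k$ over $[t_{0},t_{0}+\varepsilon]$ entering as an $O(\varepsilon)$ source. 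The crucial algebraic point, following Hu \cite{Hu-17}, is that the $z$-dependence of $k$ forces the effective Hamiltonian to carry the shifted third argument $z+\langle p,b(t,x,v)-\bar{b}(t)\rangle_{\mathcal{L}_{2}^{0}}$, exactly as in the definition of $\mathcal{H}$; this, together with the $k_{z}$-corrections built into $\tilde{A}$ and $\tilde{B}$, is what makes the duality close in the recursive case.

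The core of the argument is the duality step. I would combine the first- and second-order variational BSDEs with It\^{o}'s formula applied to the products $\langle p(s),X_{1}(s)\rangle$ and $\langle p(s),X_{2}(s)\rangle$ using the first-order adjoint BSEE (\ref{adjoint1}), and---this is the technically demanding part---apply an It\^{o} formula for the operator-valued adjoint $P$ of (\ref{adjoint2}) to the quadratic form $\langle P(s)X_{1}(s),X_{1}(s)\rangle$, exploiting the flow property $\tilde{L}(t,s)=\tilde{L}(t,r)\tilde{L}(r,s)$ and the representation of $P$ through the evolution operator $\tilde{L}=L_{\tilde{A},\tilde{B}}$. The odd-order martingale contributions, being linear in the Brownian spike, vanish in expectation, and by the moment estimates of the first paragraph all cross terms are $o(\varepsilon)$. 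Collecting the surviving $O(\varepsilon)$ terms---the drift spike paired with $p$, the diffusion spike paired with $q$, the spike of the driver $k$, and the quadratic term $\tfrac{1}{2}\langle P(t_{0})\delta b(t_{0};v),\delta b(t_{0};v)\rangle_{\mathcal{L}_{2}^{0}}$---and dividing by $\varepsilon$, the Lebesgue differentiation theorem shows that for a.e.\ $t_{0}\in[0,T)$ and each fixed $v\in U$ the quantity inside the infimum in (\ref{MP}), evaluated at $v$, is nonnegative. Since that quantity vanishes at $v=\bar{u}(t_{0})$, taking the infimum over a countable dense subset of $U$ and using continuity in $v$ (guaranteed by $(H5)$) yields (\ref{MP}).

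The main obstacle I anticipate is precisely the duality with the second-order adjoint $P$. Because $B$ is unbounded and enters the diffusion term, the naive It\^{o} expansion of $\langle P(s)X_{1}(s),X_{1}(s)\rangle$ produces terms that are not a priori integrable; one must use the coercivity and quasi-skew-symmetry conditions $(H4)$ together with the conditionally expected operator-valued formulation of Theorem \ref{Mainthm-1} to give these terms a rigorous meaning and to bound them uniformly in $\varepsilon$. The time-dependence of $A$ and $B$ and the merely weak measurability of $P$ further require the continuity property of Proposition \ref{Myth2-7-2} in order to justify evaluating $P$ at the spike time $t_{0}$ when passing to the limit.
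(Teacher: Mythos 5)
This theorem is imported verbatim from \cite[Theorem 3.2]{LT21} and is not proved in the present paper, so there is no in-paper argument to compare against; your sketch reconstructs essentially the proof route of that reference --- Peng's spike variation, the first- and second-order variational SEEs with the $O(\sqrt{\varepsilon})/O(\varepsilon)$ moment estimates, duality via the first-order adjoint BSEE and an It\^{o} formula for the conditionally expected operator-valued BSIE, and Hu's shifted $z$-argument in the Hamiltonian --- which is precisely the machinery (Theorems \ref{Mainthm-1}, \ref{Myth3-7} and Proposition \ref{Myth2-7-2}) the paper recalls for this purpose. The outline is correct and follows the same approach as the cited source.
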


\begin{remark}
Because the coefficients of the BSIE \eqref{adjoint2} are self-adjoint, it follows that $P$ is self-adjoint and thus lies in $\mathcal{S}(H)$,  the space of bounded self-adjoint linear operators on $H$.
\end{remark}

\subsection{Dynamic programming principle}

Given any  $(t,\xi)\in\lbrack0,T]\times L^{2}(\mathcal{F}_{t},H),$ we consider the following
controlled SEE with different initial time and values:
\begin{equation}\label{SEE1-1}
	\begin{cases}
		dX^{t,\xi;u}(s)
		= \big[A(s)X^{t,\xi;u}(s) + a(s, X^{t,\xi;u}(s), u(s))\big]\,ds \\
		\qquad\quad
		+ \big[B(s)X^{t,\xi;u}(s) + b(s, X^{t,\xi;u}(s), u(s))\big]\,dw(s), \quad s\in [t,T],\\[4pt]
		X^{t,\xi;u}(t) = \xi .
	\end{cases}
\end{equation}
The cost functional is defined by
\[
J(t,\xi;u(\cdot)):=Y^{t,\xi;u}(t),
\]
with $Y^{t,\xi;u}$ solves the BSDE
\begin{equation}
Y^{t,\xi;u}(s)=h(X^{t,\xi;u}(T))+\int_{s}^{T}k(r,X^{t,\xi;u}(r),Y^{t,x;u}%
(r),Z^{t,\xi;u}(r),u(r))dr-\int_{s}^{T}Z^{t,\xi;u}(r)dw({r}),\quad 
s\in\lbrack t,T]. \label{BSDE1-2}%
\end{equation}

For each given $(t,x)\in\lbrack0,T]\times H$, the goal of the optimal control problem  $(S_{t,x})$ is to find an
admissible control $\bar{u}(\cdot)$ such that the cost functional
$J(t,x;\bar{u}(\cdot))$ is minimized at $\bar{u}(\cdot)$ over the control set
$\mathcal{U}[t,T]:$
\[
J(t,x;\bar{u}(\cdot))=\underset{u(\cdot)\in\mathcal{U}[t,T]}{ess\inf
}J(t,x;u(\cdot)).
\]
Note that when $t=0,$ $(S_{t,x})$ reduces to $(S_{x}),$ and $(X^{t,x;u}%
(s),Y^{t,x;u}(s),Z^{t,x;u}(s))=(X(s),Y(s),Z(s)),$ which are defined in the last subsection.

We define the value function
\begin{equation}
\label{value-f}V(t,x):=\underset{u(\cdot)\in\mathcal{U}[t,T]}{ess\inf
}J(t,x;u(\cdot)),\quad (t,x)\in\lbrack0,T]\times H.
\end{equation}

We denote by $\mathcal{U}^{t}[t,T]$ the space of all $U$-valued $(\mathcal{F}%
_{s}^{t})_{t\leq s\leq T}$-progressively measurable processes in $\mathcal{U}[t,T]$,
with $\mathcal{F}_{s}^{t}$ being the augmented natural filtration of
$(w(s)-w(t))_{s\geq t}$. Then for each $u\in\mathcal{U}^{t}[t,T],$ it is easy
to verify that the solution $(X^{t,x;u}(s),Y^{t,x;u}(s),Z^{t,x;u}(s))_{t\leq
s\leq T}$ of the system is $(\mathcal{F}_{s}^{t})_{t\leq s\leq T}$-adapted. In
particular, $Y^{t,x;u}(t)\in\mathcal{F}_{t}^{t},$ so it is deterministic. From
standard arguments (see Proposition \ref{Le3-2} for more details), we can see that  $V(t,x)$ is
a deterministic function.

Given a positive constant $\delta\leq T-t$ and
control $u\in\mathcal{U}[t,t+\delta],$ for each $\eta\in L^{2}(\mathcal{F}%
_{t+\delta}),$ we define the backward semigroup%
\[
G_{t,t+\delta}^{t,x;u}[\eta]:= \tilde Y(t),
\]
where $(\tilde Y(s),\tilde Z(s))$ solves the following BSDE
\begin{equation}
\label{EQ6}\tilde Y(s)=\eta+\int_{s}^{t+\delta}k(r,X^{t,x;u}
(r),\tilde Y(r),\tilde Z(r),u(r))dr-\int_{s}^{t+\delta}\tilde Z(r)dw({r}),\quad s\in\lbrack
t,t+\delta].
\end{equation}

We make the following assumption.
\begin{description}
\item[$(H6)$] $a,b,k,h$ are continuous in $(t,x,y,z,v)$ and satisfy for some constants $L_3,L_4,L_5>0$  that
\begin{align*}
&\big\Vert a(t,x_{1},v)-a(t,x_{2},v)\big\Vert_{H}+\big\Vert b(t,x_{1},u)-b(t,x_{2},v)\big\Vert_H\leq L_3\Vert x_{1}-x_{2}\Vert_{H},
\\
&\big|k(t,x_{1},y_{1},z_{1},v)-k(t,x_{2},y_{2},z_{2},v)\big|\leq L_3\big(\Vert x_{1}%
-x_{2}\Vert_{H}+|y_{1}-y_{2}|+| z_{1}-z_{2}|\big),
\\
&\big\Vert h(x_{1})-h(x_{2})\big\Vert_{H}\leq L_3\Vert x_{1}-x_{2}\Vert_{H},
\end{align*}
for all $t\in\lbrack0,T],(x_{i},y_{i},z_{i})\in H\times\mathbb{R}%
\times\mathbb{R},$ $i=1,2$, $v\in U$, and the functions $a,b,h$, $k$ are bounded by $L_4(1+\Vert
x\Vert_{H}+|y|+| z|+|v|_{U});$
$a(t,0,v),b(t,0,v),k(t,0,0,0,v)$ are  bounded by $L_5$.
\end{description}

We have the following DPP for Problem
$(S_{t,x})$. The proof is standard and is presented in the Appendix.
\begin{theorem}
\label{DPP} Assume $(H4)$ and $(H6)$ for $(S_{t,x})$. For each $(t,x)\in\lbrack0,T]\times H$
and $0\leq\delta\leq T-t,$ we have
\[
V(t,x)=\underset{u(\cdot)\in\mathcal{U}[t,t+\delta]}{ess\inf}G_{t,t+\delta
}^{t,x;u}[V(t+\delta,X^{t,x;u}(t+\delta))]=\inf_{u(\cdot)\in\mathcal{U}%
^{t}[t,t+\delta]}G_{t,t+\delta}^{t,x;u}[V(t+\delta,X^{t,x;u}(t+\delta))].
\]
\end{theorem}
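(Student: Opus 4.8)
The plan is to prove the Dynamic Programming Principle (Theorem \ref{DPP}) by establishing the two standard inequalities. Throughout, I would first fix the key structural ingredients: the \emph{flow property} of the controlled state equation (\ref{SEE1-1}), namely that for $t\le t+\delta\le s$ one has $X^{t,x;u}(s)=X^{t+\delta,X^{t,x;u}(t+\delta);u}(s)$ $P$-a.s., and the corresponding consistency of the recursive utility under the backward semigroup $G$. The crucial analytic tools are the comparison theorem and the monotonicity/stability of the BSDE (\ref{BSDE1-2}), which under the Lipschitz assumption in $(H6)$ guarantee that $G_{t,t+\delta}^{t,x;u}[\cdot]$ is monotone and continuous in its argument in $L^2(\mathcal{F}_{t+\delta})$. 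I would also record the semigroup (concatenation) property $J(t,x;u(\cdot))=G_{t,t+\delta}^{t,x;u}[Y^{t,x;u}(t+\delta)]=G_{t,t+\delta}^{t,x;u}[J(t+\delta,X^{t,x;u}(t+\delta);u(\cdot))]$, obtained by splitting the BSDE (\ref{BSDE1-2}) on $[t,t+\delta]$ and $[t+\delta,T]$ and invoking uniqueness.

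For the inequality $V(t,x)\le \mathrm{ess\,inf}_{u}\,G_{t,t+\delta}^{t,x;u}[V(t+\delta,X^{t,x;u}(t+\delta))]$, I would fix an arbitrary $u\in\mathcal{U}[t,t+\delta]$ and, for any $\varepsilon>0$, choose an $\varepsilon$-optimal control on the remaining interval starting from the (random) endpoint $X^{t,x;u}(t+\delta)$. Here a measurable-selection argument is needed to paste together the controls into a single admissible $\tilde u\in\mathcal{U}[t,T]$; this requires that the $\varepsilon$-optimal control can be chosen measurably with respect to $X^{t,x;u}(t+\delta)$, which is where the deterministic nature of $V$ and the restriction to the independent-increment filtration $(\mathcal{F}_s^t)$ enter. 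Using $J(t,x;\tilde u(\cdot))\ge V(t,x)$ together with the semigroup property and the comparison theorem (since $V(t+\delta,X^{t,x;u}(t+\delta))\le J(t+\delta,X^{t,x;u}(t+\delta);\hat u)+\varepsilon$ for the pasted tail $\hat u$), followed by letting $\varepsilon\downarrow0$, yields this direction.

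For the reverse inequality $V(t,x)\ge \mathrm{ess\,inf}_{u}\,G_{t,t+\delta}^{t,x;u}[V(t+\delta,X^{t,x;u}(t+\delta))]$, I would take an arbitrary $u\in\mathcal{U}[t,T]$ and use the definition $V(t+\delta,X^{t,x;u}(t+\delta))\le J(t+\delta,X^{t,x;u}(t+\delta);u(\cdot))=Y^{t,x;u}(t+\delta)$ $P$-a.s. By monotonicity of the backward semigroup this gives $G_{t,t+\delta}^{t,x;u}[V(t+\delta,X^{t,x;u}(t+\delta))]\le G_{t,t+\delta}^{t,x;u}[Y^{t,x;u}(t+\delta)]=J(t,x;u(\cdot))$, whence taking $\mathrm{ess\,inf}$ over $u$ on the right and noting the left-hand side is bounded below by the essential infimum over $\mathcal{U}[t,t+\delta]$ completes the direction. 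Finally, the reduction from $\mathcal{U}[t,t+\delta]$ to the smaller class $\mathcal{U}^{t}[t,t+\delta]$ (the last equality in the statement) follows because the value function is deterministic and the shifted filtration $(\mathcal{F}_s^t)$ suffices to represent any admissible control's cost up to an $L^2$-negligible modification.

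The main obstacle I anticipate is the measurable-selection / pasting step in the first inequality: because the state $X^{t,x;u}(t+\delta)$ is an $H$-valued random variable and $H$ is infinite-dimensional, one must carefully construct a near-optimal control depending measurably on this endpoint and verify it is genuinely admissible in $\mathcal{U}[t,T]$ (in particular that the $L^\alpha$-integrability in the definition of $\mathcal{U}[t,T]$ is preserved under pasting). Establishing the continuity of $(t,x)\mapsto V(t,x)$ and the $L^2$-stability estimates for (\ref{SEE1-1})--(\ref{BSDE1-2}) in the endpoint---leaning on Lemma \ref{apriori-see} and Remark \ref{Rm3-6}---is what makes the selection argument and the passage to the limit in $\varepsilon$ rigorous in the Hilbert-space setting.
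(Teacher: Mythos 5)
Your overall architecture is the same as the paper's: flow property of the state equation, uniqueness and comparison for the BSDE to get the semigroup identity $J(t,x;u)=G^{t,x;u}_{t,t+\delta}[Y^{t,x;u}(t+\delta)]$, the easy inequality via $V(t+\delta,X^{t,x;u}(t+\delta))\le Y^{t,x;u}(t+\delta)$ and monotonicity of $G$, and the hard inequality via concatenation with an $\varepsilon$-optimal tail control. The substantive difference is how the random initial condition at time $t+\delta$ is handled. You propose a measurable-selection argument: choose, measurably in $y\in H$, a near-optimal control for the problem started at $(t+\delta,y)$ and substitute $y=X^{t,x;u}(t+\delta)$. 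You correctly flag this as the main obstacle, but you do not resolve it, and in this infinite-dimensional setting it is genuinely delicate (one must select into the control space, verify joint measurability and admissibility). The paper avoids selection entirely: in Proposition \ref{Le3-2} it approximates a general control by finite sums $\sum_i v^{mi}I_{A_{mi}}$ over $\mathcal F_t$-partitions with $v^{mi}\in\mathcal U^t$, and in Proposition \ref{pro3-5} it approximates the random state $\xi$ by a countably-valued simple function $\sum_i x_iI_{A_i}$, picks an $\varepsilon$-optimal control for each deterministic $x_i$, pastes them with the indicators $1_{A_i}$, and passes to the limit using the Lipschitz estimates of Lemmas \ref{Le3-3}--\ref{Le3-4} (this is also exactly what justifies the identity $V(t,\xi)=\mathop{\mathrm{ess\,inf}}_u Y^{t,\xi;u}_t$ for random $\xi$, which your easy-direction argument silently uses when it invokes ``the definition'' of $V$ at the random point $X^{t,x;u}(t+\delta)$). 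The partition route buys rigor at low cost — everything reduces to countably many deterministic initial points — whereas the selection route, if carried out, would be more technical for no gain here. Apart from supplying this device, your proposal matches the paper's proof.
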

\begin{remark}
	The DPP for infinite-dimensional stochastic systems in the nonrecursive case, under the weak formulation, was established in \cite{FGS17}. For the recursive case, the DPP in the strong formulation and within the mild-solution framework was obtained in \cite{CL23,TZ24-1}. For completeness, we establish here the DPP in the strong formulation under the variational-solution framework. In contrast to \cite{CL23,TZ24-1}, our state equation allows an unbounded diffusion operator, and all unbounded operators may be time dependent.
\end{remark}
\section{Relationship between MP and DPP}
In this section, we study the relationship between the MP and the DPP in the nonsmooth case.  
To carry out our purpose, we first derive an It\^{o}  formula for the
operator-valued BSIEs.
\subsection{An It\^{o}'s  formula for second adjoint equations}
Given any $t\in\lbrack0,T]$ and $x_{0}\in L^{2\alpha}(\mathcal{F}_{t},H),$
consider the operator-valued BSIE
\begin{equation}
P(s)=\mathbb{E}\Big[\tilde{L}^{\ast}(s,T)\xi\tilde{L}(s,T)+\int_{s}^{T}%
\tilde{L}^{\ast}(s,r)f(r,P(r))\tilde{L}(s,r)dr\Big|\mathcal{F}_{s}\Big],\quad
s\in\lbrack t,T], \label{Myeq4-19}%
\end{equation}
and the forward SEEs in the form of
\begin{equation}\label{Myeq3-9}
	\begin{cases}
		dx(s)
		= \big[A(s)x(s) + \gamma_{1}(s)\big]\,ds
		+ \big[B(s)x(s) + \gamma_{2}(s)\big]\,dw(s),
		\quad s\in[t,T], \\[4pt]
		x(t) = x_{0}.
	\end{cases}
\end{equation}
where, for some $\beta\in L_{\mathbb{F}}^{\infty}(t,T)$,
\[
\tilde{L}(s,r):=L_{\tilde{A},\tilde{B}}(s,r)\quad\text{with}\ \ \tilde
{A}(s):=A(s)+\frac{\beta(s)}{2}B(s)-\frac{\beta^{2}(s)}{8}I_{d}\ \ \text{and}%
\ \ \tilde{B}(s):=B(s)+\frac{\beta(s)}{2}I_{d}.
\]

We have the following It\^{o}  formula.

\begin{theorem}
\label{Myth3-7} Let Assumptions $(H2)$ and $(H4)$ be satisfied and for some
$\alpha>1$,
\[
(\xi,f(\cdot,\cdot,0),x_{0},\gamma_{1},\gamma_{2})\in L_{w}^{2\alpha
}(\mathcal{F}_{T},\mathfrak{L}(H))\times L_{\mathbb{F},w}^{2,2\alpha
}(t,T;\mathfrak{L}(H))\times L^{4\alpha}(\mathcal{F}_{t},H)\times
L_{\mathbb{F}}^{1,4\alpha}(t,T;H)\times L_{\mathbb{F}}^{2,4\alpha
}(t,T; H). \label{Myeq3-13}%
\]
Then, for $s\in\lbrack t,T]$,
\begin{equation}
\langle P(s)x(s),x(s)\rangle+\sigma(s)=\langle\xi x(T),x(T)\rangle+\int%
_{s}^{T}\big[\langle f(r,P(r))x(r),x(r)\rangle+\beta(r)\mathcal{Z}%
(r)\big ]ds-\int_{s}^{T}\mathcal{Z}(r)dw({r}), \label{Myeq3-5}%
\end{equation}
for a unique couple of processes $(\sigma,\mathcal{Z})\in L_{\mathbb{F}%
}^{\alpha}(t,T)\times L_{\mathbb{F}}^{2,\alpha}(t,T)$ satisfying
\[
  \sup_{t\leq s\leq T}\big(\mathbb{E}\big[|\sigma(s)|^{\alpha}%
\big|\mathcal{F}_{t}\big]\big)^{\frac{1}{\alpha}}\leq M(t)\mu
_{1}(t) \ \text{and}\ \ \Big\{\mathbb{E}\Big[\Big(\int_{t}^{T}
|\mathcal{Z}(s)|^{2}
ds\Big)^{\frac{\alpha}{2}%
}
ds\Big|\mathcal{F}_{t}\Big]\Big\}^{\frac{1}{\alpha}}\leq
M(t)\mu_{2}(t),\quad P\text{-a.s.},
\]
where
\begin{align*}
\mu_{1}(t)  &  :=\Big\{\mathbb{E}\Big[\Big(\int_{t}^{T}\left\Vert \gamma
_{1}(s)\right\Vert _{H}ds\Big)^{4\alpha}\Big|\mathcal{F}_{t}\Big]+\mathbb{E}%
\Big[\Big(\int_{t}^{T}\left\Vert \gamma_{2}(s)\right\Vert _H^{2}ds\Big)^{2\alpha}\Big|\mathcal{F}_{t}\Big]\Big\}^{\frac
{1}{2\alpha}}\\
&  +\Big\{\mathbb{E}\Big[\Big(\int_{t}^{T}\left\Vert \gamma_{1}(s)\right\Vert
_{H}ds\Big)^{4\alpha}\Big|\mathcal{F}_{t}\Big]+\mathbb{E}\Big[\Big(\int%
_{t}^{T}\left\Vert \gamma_{2}(s)\right\Vert _H%
^{2}ds\Big)^{2\alpha}\Big|\mathcal{F}_{t}\Big]\Big\}^{\frac{1}{4\alpha}%
}\left\Vert x_{0}\right\Vert _{H},
\end{align*}%
\[
\mu_{2}(t):=\left\Vert x_{0}\right\Vert _{H}^{2}+\Big\{\mathbb{E}%
\Big[\Big(\int_{t}^{T}\left\Vert \gamma_{1}(s)\right\Vert _{H}ds\Big)^{4\alpha
}\Big|\mathcal{F}_{t}\Big]+\mathbb{E}\Big[\Big(\int_{t}^{T}\left\Vert
\gamma_{2}(s)\right\Vert _H^{2}ds\Big)^{2\alpha
}\Big|\mathcal{F}_{t}\Big]\Big\}^{\frac{1}{2\alpha}},
\]
and
\[
M(t):=C\Big\{\big(\mathbb{E}[\Vert\xi\Vert_{\mathfrak{L}%
(H)}^{2\alpha}|\mathcal{F}_{t}]\big)^{\frac{1}{2\alpha}}+\Big(\mathbb{E}%
\Big[\Big(\int_{t}^{T}\Vert f(s,0)\Vert_{\mathfrak{L}(H)}^{2}ds\Big)^{\alpha
}\Big|\mathcal{F}_{t}\Big]\Big)^{\frac{1}{2\alpha}}\Big\}.
\]

\end{theorem}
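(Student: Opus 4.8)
The plan is to bypass the absence of an explicit martingale (``$dP$'') representation for the BSIE (\ref{Myeq4-19}) by reducing the operator equation, paired against the trajectory $x$, to a scalar linear BSDE through the conditional-expectation representation of $P$, and then to pack all the contributions of the inhomogeneous forcing $\gamma_1,\gamma_2$ into the single correction $\sigma$. The whole point is that the generator $\langle f(r,P(r))x(r),x(r)\rangle+\beta(r)\mathcal{Z}(r)$ and the terminal value $\langle\xi x(T),x(T)\rangle$ are explicit data, so the pair $(\sigma,\mathcal{Z})$ can be \emph{defined} and then \emph{estimated}, rather than discovered by a direct It\^o expansion for which no stochastic differential of $P$ is available.

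First I would record the factorization $\tilde L(s,r)=\mathcal{E}(s,r)\,L_{A,B}(s,r)$, where $\mathcal{E}(s,\cdot)$ is the scalar exponential solving $d_r\mathcal{E}(s,r)=\tfrac12\beta(r)\mathcal{E}(s,r)\,dw(r)-\tfrac18|\beta(r)|^2\mathcal{E}(s,r)\,dr$ with $\mathcal{E}(s,s)=1$. This is verified by applying It\^o's product rule to $\mathcal{E}(s,\cdot)L_{A,B}(s,\cdot)u_0$ in the variational framework and checking, from the definitions of $\tilde A,\tilde B$, that the product solves the homogeneous SEE (\ref{Myeq2-17}) driven by $(\tilde A,\tilde B)$; uniqueness for (\ref{Myeq2-17}) then gives the identity. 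Its square $D(s,r):=\mathcal{E}(s,r)^2=\exp\big(\int_s^r\beta\,dw-\tfrac12\int_s^r|\beta|^2\,d\rho\big)$ enjoys the flow property $D(s,r)=D(s,\rho)D(\rho,r)$ and $d_rD(s,r)=\beta(r)D(s,r)\,dw(r)$, i.e. $D$ is precisely the weight (adjoint factor) attached to a linear BSDE whose generator is linear in $z$ with coefficient $\beta$.

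Next I would establish the pointwise representation
\[
\langle P(s)x(s),x(s)\rangle=\mathbb{E}\Big[\langle\xi\,\tilde L(s,T)x(s),\tilde L(s,T)x(s)\rangle+\int_s^T\langle f(r,P(r))\tilde L(s,r)x(s),\tilde L(s,r)x(s)\rangle\,dr\,\Big|\,\mathcal{F}_s\Big],
\]
obtained by applying the operator-valued conditional expectation defining $P(s)$ to the $\mathcal{F}_s$-measurable vector $x(s)$ and pairing with $x(s)$; since $x(s)\in L^{4\alpha}(\mathcal{F}_s,H)$ by Lemma \ref{apriori-see} (applied with $2\alpha$ in place of $\alpha$), this is justified by approximating $x(s)$ with simple $\mathcal{F}_s$-measurable vectors and using the domination bound $\|P(s)\|_{\mathfrak{L}(H)}\le g$ of Theorem \ref{Myth2-17}. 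In parallel I would define $(\mathcal{M},\mathcal{Z})\in L_{\mathbb{F}}^{\alpha}(t,T)\times L_{\mathbb{F}}^{2,\alpha}(t,T)$ as the unique solution of the scalar linear BSDE with terminal value $\langle\xi x(T),x(T)\rangle$ and generator $\langle f(r,P(r))x(r),x(r)\rangle+\beta(r)\mathcal{Z}(r)$; this is well posed because $\beta\in L_{\mathbb{F}}^{\infty}(t,T)$ and, by (\ref{Myeq3-23}) together with Lemma \ref{apriori-see}, the data lie in the required spaces. Setting $\sigma(s):=\mathcal{M}(s)-\langle P(s)x(s),x(s)\rangle$ makes (\ref{Myeq3-5}) hold by construction, and uniqueness of $(\sigma,\mathcal{Z})$ is immediate, since the difference of two admissible pairs solves a homogeneous linear BSDE with zero terminal datum.

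The substance is the two estimates. For $\mathcal{Z}$ I would invoke the standard a priori $L^{\alpha}$--$L^{2,\alpha}$ estimate for linear BSDEs, controlling the terminal value by $\|\xi\|_{\mathfrak{L}(H)}\|x(T)\|_H^2$ and the free term by $\|P(r)\|_{\mathfrak{L}(H)}\|x(r)\|_H^2$; bounding $\|P\|$ by (\ref{Myeq3-23}) and $\sup_r\|x(r)\|_H^2$ by Lemma \ref{apriori-see} produces the factor $C_1^{\omega}$ and the weight $\mu_2$, which carries the quadratic term $\|x_0\|_H^2$. The delicate point, and the main obstacle, is the bound on $\sigma$. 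Here I would combine the representation above with the linear-BSDE representation
\[
\mathcal{M}(s)=\mathbb{E}\Big[D(s,T)\langle\xi x(T),x(T)\rangle+\int_s^T D(s,r)\langle f(r,P(r))x(r),x(r)\rangle\,dr\,\Big|\,\mathcal{F}_s\Big],
\]
and use the factorization to write $\tilde L(s,r)x(s)=\mathcal{E}(s,r)\big(x(r)-e(s,r)\big)$, where the inhomogeneity error $e(s,r):=x(r)-L_{A,B}(s,r)x(s)$ solves (\ref{Myeq3-9}) with zero initial value and the same forcing $(\gamma_1,\gamma_2)$. Since $D=\mathcal{E}^2$, every leading term cancels, leaving $\sigma(s)$ as a conditional expectation of remainders each carrying at least one factor $e(s,r)$ (namely $\langle\xi\,\cdot\,,e\rangle$, $\langle\xi e,e\rangle$ and their $f$-analogues, not requiring symmetry of $\xi$); in particular $\sigma\equiv0$ when $\gamma_1=\gamma_2=0$. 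Estimating $e$ \emph{solely} through $(\gamma_1,\gamma_2)$ by Lemma \ref{apriori-see}(i) with $2\alpha$ in place of $\alpha$---which reproduces exactly the bracketed expression inside $\mu_1$---and then applying conditional H\"older inequalities (one $e$-factor paired with $\|x_0\|_H$ giving the term of $\mu_1$ linear in $\|x_0\|_H$, two $e$-factors giving the purely-$\gamma$ term), with the operator moments of $\xi$ and $f(\cdot,P)$ absorbed into $C_1^{\omega}$, yields the stated bound. The careful matching of the H\"older conjugates so that the exponents line up with the $2\alpha,4\alpha$ pattern of $\mu_1$ is where the technical effort lies, while Proposition \ref{Myth2-7-2} supplies the continuity of $s\mapsto\langle P(s)u,v\rangle$ needed to guarantee $\sigma\in L_{\mathbb{F}}^{\alpha}(t,T)$.
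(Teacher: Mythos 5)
Your proposal is correct and follows essentially the same route as the paper: factor the scalar exponential out of $\tilde L$ so that its square becomes the weight of the linear BSDE (\ref{Myeq3-5}), recognize the conditional-expectation formula as the explicit solution of that BSDE (which defines $(\sigma,\mathcal{Z})$ and gives uniqueness), control $\sigma$ by the inhomogeneity error of a homogeneous comparison solution via Lemma \ref{apriori-see}, and get the $\mathcal{Z}$-bound from standard BSDE a priori estimates. The only (cosmetic) difference is that you compare $x(r)$ with $L_{A,B}(s,r)x(s)$ propagated from time $s$, whereas the paper compares with the homogeneous solution $\tilde x$ launched from $(t,x_0)$; both errors solve the same zero-initial-value SEE driven by $(\gamma_1,\gamma_2)$ and yield the stated $\mu_1,\mu_2$ after the same H\"older bookkeeping.
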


\begin{proof}
We introduce a new SEE
\begin{equation}
	\left\{
	\begin{aligned}
		{d}\tilde{x}(s) = &\,\, A(s)\tilde{x}(s){d}s+B(s)\tilde{x}(s){d}w(s),\quad
		s\in\lbrack t,T], \\
		x(t)= &x_{0}.
	\end{aligned}
	\right.
\end{equation}
Then from Assertion (i) of Lemma \ref{apriori-see},
\begin{equation}
\mathbb{E}\Big[\sup_{s\in\lbrack t,T]}\left\Vert x(s)-\tilde{x}(s)\right\Vert
_{H}^{4\alpha}\Big|\mathcal{F}_{t}\Big]\leq C\Big\{\mathbb{E}\Big[\Big(\int%
_{t}^{T}\left\Vert \gamma_{1}(s)\right\Vert _{H}ds\Big)^{4\alpha
}\Big|\mathcal{F}_{t}\Big]+\mathbb{E}\Big[\Big(\int_{t}^{T}\left\Vert
\gamma_{2}(s)\right\Vert _H^{2}ds\Big)^{2\alpha
}\Big|\mathcal{F}_{t}\Big]\Big\}, \label{Eq3-17}%
\end{equation}%
\begin{equation}
\mathbb{E}\Big[\sup_{s\in\lbrack t,T]}\left\Vert x(s)\right\Vert _{H}%
^{4\alpha}\Big|\mathcal{F}_{t}\Big]\leq C\Big\{\left\Vert x_{0}\right\Vert
_{H}^{4\alpha}+\mathbb{E}\Big[\Big(\int_{t}^{T}\left\Vert \gamma
_{1}(s)\right\Vert _{H}ds\Big)^{4\alpha}\Big|\mathcal{F}_{t}\Big]+\mathbb{E}%
\Big[\Big(\int_{t}^{T}\left\Vert \gamma_{2}(s)\right\Vert _H^{2}ds\Big)^{2\alpha}\Big|\mathcal{F}_{t}\Big]\Big\}, \label{Eq3-18}%
\end{equation}
\begin{equation}
\mathbb{E}\Big[\sup_{s\in\lbrack t,T]}\left\Vert \tilde{x}(s)\right\Vert
_{H}^{4\alpha}\Big|\mathcal{F}_{t}\Big]\leq C\left\Vert x_{0}\right\Vert
_{H}^{4\alpha}. \label{Eq3-19}%
\end{equation}

We denote
$
\lambda(s):=e^{-\int_{0}^{s}\frac{1}{2}{\beta^{2}(r)}dr+\int_{0}^{s}\beta(r)dw({r})}.
$
For any $t\leq s\leq r\leq T,$ according to  \cite[Lemma 4.3]{LT21}, 
\[
\tilde{L}(s,r)=\frac{\lambda_{1}(r)}{\lambda_{1}(s)}L(s,r),
\]
with $L(s,r):=L_{A,B}(s,r) $ and $\lambda_{1}(s):=e^{-\int_{0}^{s}\frac{1}%
{4}\beta^{2}(r)dr+\int_{0}^{s}\frac{1}{2}\beta(r)dw({r})}.$ Then, for  
$s\in\lbrack t,T]$,
\[
P(s)=\mathbb{E}\Big[\frac{\lambda(T)}{\lambda(s)}L^{\ast}(s,T)\xi
L(s,T)+\int_{s}^{T}\frac{\lambda(r)}{\lambda(s)}L^{\ast}(s,r)f(r,P(r))L^{\ast
}(s,r)dr\Big|\mathcal{F}_{s}\Big].
\]
From the definition of $L(s,r),$ we have $L(s,r)\tilde{x}(s)=\tilde{x}(r)$.
Thus,
\begin{align*}
\langle P(s)\tilde{x}(s),\tilde{x}(s)\rangle &  =\mathbb{E}\Big[\frac
{\lambda(T)}{\lambda(s)}\langle\xi L(s,T)\tilde{x}(s),L(s,T)\tilde
{x}(s)\rangle+\int_{s}^{T}\frac{\lambda(r)}{\lambda(s)}\langle
f(r,P(s))L(s,r)\tilde{x}(s),L(s,r)\tilde{x}(s)\rangle dr\Big|\mathcal{F}%
_{s}\Big]\label{Myeq5-6}\\
&  =\mathbb{E}\Big[\frac{\lambda(T)}{\lambda(s)}\langle\xi\tilde{x}%
(T),\tilde{x}(T)\rangle+\int_{s}^{T}\frac{\lambda(r)}{\lambda(s)}\langle
f(r,P(s))\tilde{x}(r),\tilde{x}(r)\rangle dr\Big|\mathcal{F}_{s}\Big].
\end{align*}
Then
\begin{equation}
\langle P(s)x(s),x(s)\rangle+\sigma(s)=\mathbb{E}\Big[\frac{\lambda
(T)}{\lambda(s)}\langle\xi x(T),x(T)\rangle+\int_{s}^{T}\frac{\lambda
(r)}{\lambda(s)}\langle f(r,P(s))x(r),x(r)\rangle dr\Big|\mathcal{F}_{s}\Big],
\label{eq4-9}%
\end{equation}
where
\begin{align*}
\sigma(s)    =&\Big(\mathbb{E}\Big[\frac{\lambda(T)}{\lambda(s)}\langle\xi
x(T),x(T)\rangle\Big|\mathcal{F}_{s}\Big]-\mathbb{E}\Big[\frac{\lambda
(T)}{\lambda(s)}\langle\xi\tilde{x}(T),\tilde{x}(T)\rangle\Big|\mathcal{F}%
_{s}\Big]\Big)\\
&  +\Big(\mathbb{E}\Big[\int_{s}^{T}\frac{\lambda(r)}{\lambda(s)}\langle
f(r,P(s))x(r),x(r)\rangle dr\Big|\mathcal{F}_{s}\Big]-\mathbb{E}\Big[\int%
_{s}^{T}\frac{\lambda(r)}{\lambda(s)}\langle f(r,P(s))\tilde{x}(r),\tilde
{x}(r)\rangle dr\Big|\mathcal{F}_{s}\Big]\Big)\\
&  -\big(\langle P(s)x(s),x(s)\rangle-\langle P(s)\tilde{x}(s),\tilde
{x}(s)\rangle\big)\\
  =&:I_{1}(s)+I_{2}(s)+I_{3}(s).
\end{align*}
Note that, with denoting by $\alpha^{\prime}$ the H\"{o}lder conjugate of
$\alpha,$
\begin{align*}
&  \mathbb{E}\Big[\Big|\frac{\lambda(T)}{\lambda(s)}\Big|\b|\langle\xi
x(T),x(T)\rangle-\langle\xi\tilde{x}(T),\tilde{x}(T)\rangle\b|\Big|\mathcal{F}%
_{s}\Big]\\
&  \leq\Big(\mathbb{E}\Big[\Big|\frac{\lambda(T)}{\lambda(s)}\Big|^{\alpha
^{\prime}}\Big|\mathcal{F}_{s}\Big]\Big)^{\frac{1}{\alpha^{\prime}}%
}\Big(\mathbb{E}\Big[\b|\langle\xi x(T),x(T)\rangle-\langle\xi\tilde
{x}(T),\tilde{x}(T)\rangle\b|^{\alpha}\Big|\mathcal{F}_{s}\Big]\Big)^{\frac
{1}{\alpha}}\\
&  \leq C\Big(\mathbb{E}\Big[\b|\langle\xi x(T),x(T)\rangle-\langle\xi\tilde
{x}(T),\tilde{x}(T)\rangle\b|^{\alpha}\Big|\mathcal{F}_{s}\Big]\Big)^{\frac
{1}{\alpha}}.
\end{align*}
Then from (\ref{Eq3-17}), (\ref{Eq3-18}) and (\ref{Eq3-19}),
\begin{align*}
&  \big(\mathbb{E}[|I_{1}(s)|^{\alpha}|\mathcal{F}_{t}%
]\big)^{\frac{1}{\alpha}}\leq C\Big(\mathbb{E}\Big[\b|\langle\xi
x(T),x(T)\rangle-\langle\xi\tilde{x}(T),\tilde{x}(T)\rangle\b|^{\alpha
}\Big|\mathcal{F}_{t}\Big]\Big)^{\frac{1}{\alpha}}\\
&  \ \ \ \ \leq C\big(\mathbb{E}\big[\Vert\xi\Vert_{\mathfrak{L}(H)}^{2\alpha
}\big|\mathcal{F}_{t}\big]\big)^{\frac{1}{2\alpha}}\big(\mathbb{E}[\Vert
x(T)-\tilde{x}(T)\Vert_{H}^{4\alpha}|\mathcal{F}_{t}]\big)^{\frac{1}{4\alpha}%
}\Big\{\big(\mathbb{E}[\Vert x(T)\Vert_{H}^{4\alpha}|\mathcal{F}%
_{t}]\big)^{\frac{1}{4\alpha}}+\big(\mathbb{E}[\Vert\tilde{x}(T)\Vert
_{H}^{4\alpha}\big|\mathcal{F}_{t}]\big)^{\frac{1}{4\alpha}}\Big\}\\
&  \ \ \ \ \leq C\big(\mathbb{E}[\Vert\xi\Vert_{\mathfrak{L}(H)}^{2\alpha
}|\mathcal{F}_{t}]\big)^{\frac{1}{2\alpha}}\Big\{\mathbb{E}\Big[\Big(\int%
_{t}^{T}\left\Vert \gamma_{1}(s)\right\Vert _{H}ds\Big)^{4\alpha
}\Big|\mathcal{F}_{t}\Big]+\mathbb{E}\Big[\Big(\int_{t}^{T}\left\Vert
\gamma_{2}(s)\right\Vert _H^{2}ds\Big)^{2\alpha
}\Big|\mathcal{F}_{t}\Big]\Big\}^{\frac{1}{4\alpha}}\\
&  \ \ \ \quad  \times \Big\{\Big\{\mathbb{E}\Big[\Big(\int_{t}^{T}\left\Vert
\gamma_{1}(s)\right\Vert _{H}ds\Big)^{4\alpha}\Big|\mathcal{F}_{t}%
\Big]+\mathbb{E}\Big[\Big(\int_{t}^{T}\left\Vert \gamma_{2}(s)\right\Vert
_H^{2}ds\Big)^{2\alpha}\Big|\mathcal{F}_{t}%
\Big]\Big\}^{\frac{1}{4\alpha}}+\left\Vert x_{0}\right\Vert _{H}\Big\}\\
&  \ \ \ \ \leq C\big(\mathbb{E}[\Vert\xi\Vert_{\mathfrak{L}(H)}^{2\alpha
}|\mathcal{F}_{t}]\big)^{\frac{1}{2\alpha}}\Big\{\mathbb{E}\Big[\Big(\int%
_{t}^{T}\left\Vert \gamma_{1}(s)\right\Vert _{H}ds\Big)^{4\alpha
}\Big|\mathcal{F}_{t}\Big]+\mathbb{E}\Big[\Big(\int_{t}^{T}\left\Vert
\gamma_{2}(s)\right\Vert _H^{2}ds\Big)^{2\alpha
}\Big|\mathcal{F}_{t}\Big]\Big\}^{\frac{1}{2\alpha}}\\
&  \ \ \ \ \ \ \ +C\big(\mathbb{E}[\Vert\xi\Vert_{\mathfrak{L}(H)}^{2\alpha
}|\mathcal{F}_{t}]\big)^{\frac{1}{2\alpha}}\Big\{\mathbb{E}\Big[\Big(\int%
_{t}^{T}\left\Vert \gamma_{1}(s)\right\Vert _{H}ds\Big)^{4\alpha
}\Big|\mathcal{F}_{t}\Big]+\mathbb{E}\Big[\Big(\int_{t}^{T}\left\Vert
\gamma_{2}(s)\right\Vert _H^{2}ds\Big)^{2\alpha
}\Big|\mathcal{F}_{t}\Big]\Big\}^{\frac{1}{4\alpha}}\left\Vert x_{0}%
\right\Vert _{H}\\
&  \ \ \ \ \leq M(t)\mu_{1}(t).
\end{align*}
Similarly, we also have from Theorem \ref{Mainthm-1} that
\begin{align*}
&  \big(\mathbb{E}[|I_{2}(s)|^{\alpha}|\mathcal{F}_{t}%
]\big)^{\frac{1}{\alpha}}\leq C_{1}\Big(\mathbb{E}\Big[\B(\int_{s}%
^{T}\big|\langle f(r,P(s))x(r),x(r)\rangle dr-\langle f(r,P(s))\tilde
{x}(r),\tilde{x}(r)\rangle\b| dr\Big)^{\alpha}\Big|\mathcal{F}_{t}%
\Big]\Big)^{\frac{1}{\alpha}}\\
&  \ \ \ \ \leq\Big(\mathbb{E}\Big[\Big(\int_{s}^{T}\Vert f(r,P(s))\Vert
_{\mathfrak{L}(H)}^{2}ds\Big)^{\alpha}\Big|\mathcal{F}_{t}\Big]\Big)^{\frac
{1}{2\alpha}}\Big(\mathbb{E}\Big[\int_{s}^{T}\Vert x(r)-\tilde{x}(r)\Vert
_{H}^{4\alpha}ds\Big|\mathcal{F}_{t}\Big]\Big)^{\frac{1}{4\alpha}}\\
&  \ \ \ \ \ \ \quad \times \Big\{\Big(\mathbb{E}\Big[\int_{s}^{T}\Vert
x(r)\Vert_{H}^{4\alpha}ds\Big]\Big)^{\frac{1}{4\alpha}}+\Big(\mathbb{E}%
\Big[\int_{s}^{T}\Vert\tilde{x}(r)\Vert_{H}^{4\alpha}ds\Big|\mathcal{F}%
_{t}\Big]\Big)^{\frac{1}{4\alpha}}\Big\}\\
&  \ \ \ \ \leq M(t)\mu_{1}(t),
\end{align*}
and
\[
\big(\mathbb{E}[|I_{3}(s)|^{\alpha}|\mathcal{F}_{t}%
]\big)^{\frac{1}{\alpha}} \leq M(t)\mu_{1}(t).
\]
Thus,%
\[
\big(\mathbb{E}[|\sigma(s)|^{\alpha}|\mathcal{F}_{t}]\big)^{\frac
{1}{\alpha}}\leq M(t)\mu_{1}(t).
\]

Note that equation (\ref{eq4-9}) is the explicit formula of the linear BSDE
(\ref{Myeq3-5}) with solution $(\langle P(s)x(s),x(s)\rangle+\sigma
(s),\mathcal{Z}(s))\in L_{\mathbb{F}}^{\alpha}(t,T)\times L_{\mathbb{F}%
}^{2,\alpha}(t,T)$. The uniqueness of $(\sigma,\mathcal{Z)}$ in the equation
(\ref{Myeq3-5}) follows from the basic results of BSDEs. On the other hand,
\begin{align*}
&  \big(\mathbb{E}[|\langle\xi x(T),x(T)\rangle|^{\alpha}|\mathcal{F}%
_{t}]\big)^{\frac{1}{\alpha}}\leq C\big(\mathbb{E}[\Vert\xi\Vert
_{\mathfrak{L}(H)}^{2\alpha}|\mathcal{F}_{t}]\big)^{\frac{1}{2\alpha}%
}\big(\mathbb{E}[\Vert x(T)\Vert_{H}^{4\alpha}|\mathcal{F}_{t}]\big)^{\frac
{1}{2\alpha}}\\
&  \ \ \ \ \ \ \ \leq C\big(\mathbb{E}[\Vert\xi\Vert_{\mathfrak{L}%
(H)}^{2\alpha}|\mathcal{F}_{t}]\big)^{\frac{1}{2\alpha}}\Big\{\left\Vert
x_{0}\right\Vert _{H}^{2}+\Big\{\mathbb{E}\Big[\Big(\int_{t}^{T}\left\Vert
\gamma_{1}(s)\right\Vert _{H}ds\Big)^{4\alpha}\B|\mathcal{F}_{t}\Big]+\mathbb{E}%
\Big[\Big(\int_{t}^{T}\left\Vert \gamma_{2}(s)\right\Vert _H^{2}ds\Big)^{2\alpha}\Big|\mathcal{F}_{t}\Big]\Big\}^{\frac{1}{2\alpha}%
}\Big\}\\
&  \ \ \ \ \ \ \ \leq M(t)\mu_{2}(t),
\end{align*}
and similarly,
\[
\Big\{\mathbb{E}\Big[\Big(\int_{s}^{T}\big|\langle f(r,P(r))x(r),x(r)\rangle
\big|ds\Big)^{\alpha}\Big|\mathcal{F}_{t}\Big]\Big\}^{\frac{1}{\alpha}}\leq
M(t)\mu_{2}(t).
\]
Observing that $\mu_{1}(t)\leq2\mu_{2}(t),$ we obtain from the basic
estimates of BSDEs that
\[
\Big\{\mathbb{E}\Big[\Big(\int_{t}^{T}
|\mathcal{Z}(s)|^{2}
ds\Big)^{\frac{\alpha}{2}%
}
\Big|\mathcal{F}_{t}\Big]\Big\}^{\frac{1}{\alpha}}\leq
M(t)\mu_{2}(t).
\]
The proof is complete.
\end{proof}

We also need the following corollary of DPP. 
\begin{lemma}
\label{Le4-4} Assume $(H2)$ and $(H4)$. If $(\bar{X}^{t,x;\bar{u}}(\cdot),\bar{Y}^{t,x;\bar{u}}%
(\cdot),\bar{Z}^{t,x;\bar{u}}(\cdot),\bar{u}(\cdot))$ are optimal for Problem
$(S_{t,x}),$ then for any $\delta\in\lbrack0,T-t],$
\[
V(t+\delta,\bar{X}^{t,x;\bar{u}}(t+\delta))=\bar{Y}^{t,x;\bar{u}}
(t+\delta),\quad P\text{-a.s}.
\]

\end{lemma}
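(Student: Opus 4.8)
The plan is to combine the dynamic programming principle (Theorem \ref{DPP}) with the comparison principle for BSDEs. Write $\bar{X}(\cdot):=\bar{X}^{t,x;\bar{u}}(\cdot)$ and $(\bar{Y},\bar{Z})(\cdot):=(\bar{Y}^{t,x;\bar{u}},\bar{Z}^{t,x;\bar{u}})(\cdot)$ for brevity. By optimality we have $V(t,x)=J(t,x;\bar{u})=\bar{Y}(t)$. The first observation is the semigroup identity
\[
\bar{Y}(t)=G_{t,t+\delta}^{t,x;\bar{u}}\big[\bar{Y}(t+\delta)\big],
\]
which is immediate from the definition of the backward semigroup: restricting the utility BSDE (\ref{BSDE1-2}) to $[t,t+\delta]$ with terminal datum $\bar{Y}(t+\delta)$ reproduces $(\bar{Y},\bar{Z})$ on that subinterval by uniqueness, so its value at time $t$ is precisely $\bar{Y}(t)$.

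Next I would establish the pointwise comparison
\[
V(t+\delta,\bar{X}(t+\delta))\leq \bar{Y}(t+\delta),\quad P\text{-a.s.}
\]
Since $V(t+\delta,\cdot)$ is deterministic (and, by the standard Lipschitz estimates for (\ref{SEE1-1})--(\ref{BSDE1-2}), continuous in $x$) while $\bar{X}(t+\delta)$ is $\mathcal{F}_{t+\delta}$-measurable, the left-hand side is a well-defined $\mathcal{F}_{t+\delta}$-measurable random variable. By the flow property of (\ref{SEE1-1})--(\ref{BSDE1-2}) one has $\bar{X}(s)=X^{t+\delta,\bar{X}(t+\delta);\bar{u}}(s)$ for $s\in[t+\delta,T]$, whence $\bar{Y}(t+\delta)=J(t+\delta,\bar{X}(t+\delta);\bar{u})$; the essential-infimum characterization of $V$ in (\ref{value-f}), applied after conditioning the shifted problem on $\mathcal{F}_{t+\delta}$, then yields the stated inequality.

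I would then sandwich $V(t,x)$. Applying Theorem \ref{DPP} with the particular control $\bar{u}$ gives $V(t,x)\leq G_{t,t+\delta}^{t,x;\bar{u}}[V(t+\delta,\bar{X}(t+\delta))]$, and combining the pointwise bound above with the monotonicity of the backward semigroup (the comparison theorem for BSDEs, valid since the driver $k$ is Lipschitz by $(H6)$) I obtain
\[
V(t,x)\leq G_{t,t+\delta}^{t,x;\bar{u}}\big[V(t+\delta,\bar{X}(t+\delta))\big]\leq G_{t,t+\delta}^{t,x;\bar{u}}\big[\bar{Y}(t+\delta)\big]=\bar{Y}(t)=V(t,x).
\]
Hence every inequality is an equality; in particular the two evaluations of $G_{t,t+\delta}^{t,x;\bar{u}}$ at the ordered terminal data $V(t+\delta,\bar{X}(t+\delta))\leq\bar{Y}(t+\delta)$ coincide. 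Finally I would invoke the strict comparison theorem for BSDEs: two ordered terminal conditions producing the same value at time $t$ under the same Lipschitz driver must agree $P$-a.s., which gives $V(t+\delta,\bar{X}(t+\delta))=\bar{Y}(t+\delta)$ $P$-a.s.

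The main obstacle I anticipate is the rigorous justification of the pointwise inequality in the second step: evaluating the deterministic value function $V(t+\delta,\cdot)$ at the random state $\bar{X}(t+\delta)$ and identifying $J(t+\delta,\bar{X}(t+\delta);\bar{u})$ with $\bar{Y}(t+\delta)$ requires the flow property of the SEE together with a conditioning argument over $\mathcal{F}_{t+\delta}$ that reduces the general admissible control $\bar{u}$ to the shifted admissible class. The remaining ingredients -- the semigroup identity, monotonicity, and the strict comparison theorem -- are standard once the Lipschitz continuity of $k$ from $(H6)$ is in hand.
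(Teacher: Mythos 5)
Your proposal is correct and follows essentially the same route as the paper: the semigroup identity for $\bar Y$, the inequality $V(t+\delta,\bar X(t+\delta))\leq\bar Y(t+\delta)$ (which the paper supplies as Proposition \ref{pro3-5} via the flow property and an approximation by simple $\mathcal{F}_{t+\delta}$-measurable initial data), the DPP sandwich, and the strict comparison theorem for BSDEs. The "obstacle" you flag in the second step is exactly what Proposition \ref{pro3-5} resolves, so nothing is missing.
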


\begin{proof}
First we have
\[
\bar{Y}^{t,x;\bar{u}}(s)=\bar{Y}^{t,x;\bar{u}}(t+\delta)+\int_{s}^{t+\delta
}k(r,X^{t,x;\bar{u}}(r),Y^{t,x;\bar{u}}(r),Z^{t,x;\bar{u}}(r),\bar
{u}(r))dr-\int_{s}^{t+\delta}Z^{t,x;\bar{u}}(r)dw({r}),\text{ }s\in\lbrack
t,t+\delta].
\]
We introduce a BSDE
\begin{align*}
y^{t,x;\bar{u}}(s) =  &  V(t+\delta,\bar{X}^{t,x;\bar{u}}(t+\delta))\\
&  +\int_{s}^{t+\delta}k(r,X^{t,x;\bar{u}}(r),y^{t,x;\bar{u}}(r),z^{t,x;\bar
{u}}(r),\bar{u}(r))dr-\int_{s}^{t+\delta}z^{t,x;\bar{u}}(r)dw({r}),\quad s
\in[t,t+\delta].
\end{align*}
From  (\ref{Myeq4-9})   and Proposition \ref{pro3-5} in the Appendix, we know that
\[
\bar{Y}^{t,x;\bar{u}}(t+\delta)=\bar{Y}^{t+\delta,\bar{X}^{t,x;\bar{u}%
}(t+\delta);\bar{u}}(t+\delta)\geq V(t+\delta,\bar{X}^{t,x;\bar{u}}%
(t+\delta)).
\]
Then by  Theorem \ref{DPP} and comparison theorem of  BSDEs,
\begin{equation}\label{eq4-3}
V(t,x)\leq G_{t,t+\delta}^{t,x;\bar{u}}\big[V(t+\delta,X^{t,x;\bar{u}%
}(t+\delta))\big]=y^{t,x;\bar{u}}(t)\leq\bar{Y}^{t,x;\bar{u}}(t).
\end{equation}
But $V(t,x)=\bar{Y}^{t,x;\bar{u}}(t).$ So, the two inequalities in (\ref{eq4-3}) are in
fact equalities. Thus, from the strict comparison theorem of  BSDEs,
we have
\[
\bar{Y}^{t,x;\bar{u}}(t+\delta)=V(t+\delta,\bar{X}^{t,x;\bar{u}}(t+\delta)),
\]
which completes the proof.
\end{proof}

\subsection{Differential in spatial variable}
In this subsection, we study the relationship between the generalized derivatives of the value function in DPP and the first- and second-order adjoint processes in MP in the spatial variables.

Let us first recall the
notions of super- and subdifferentials.
For $v\in C([0,T]\times H)$ and $(t,x)\in\lbrack0,T)\times H$, the
second-order parabolic partial superdifferential of $v$ with respect to $x$ is
defined as:
\begin{align*}
D_{x}^{2,+}v(t,x)  &  =\Big\{(p,P)\in H\times\mathcal{S}(H)\Big|\\
&  v(t,y)\leq v(t,x)+\langle p,y-x\rangle+\frac{1}{2}\langle
P(y-x),y-x\rangle+o(\Vert y-x\Vert_{H}^{2}),\mbox{ as }y\rightarrow
x\Big\}.
\end{align*}
Similarly, the second-order parabolic partial subdifferential of $v$ with
respect to $x$ is defined as:
\begin{align*}
D_{x}^{2,-}v(t,x)  &  =\Big\{(p,P)\in H\times\mathcal{S}(H)\Big|\\
&  v(t,y)\geq v(t,x)+\langle p,y-x\rangle+\frac{1}{2}\langle
P(y-x),y-x\rangle+o(\Vert y-x\Vert_{H}^{2}),\mbox{ as }y\rightarrow
x\Big\}.
\end{align*}

\begin{theorem}
\label{diff-space} Assume $(H4),(H5)$ and $(H6)$. Suppose $(\bar{X}%
(\cdot),\bar{Y}(\cdot),\bar{Z}(\cdot),\bar{u}(\cdot))$ are the optimal 4-tuple
of Problem $(S_{x})$ and $(p(\cdot),q(\cdot))$ and $P(\cdot)$ are the solutions of
corresponding adjoint equations (\ref{adjoint1})  and (\ref{adjoint2}), respectively.  Let $V\in C([0,T]\times H)$ be defined as
in (\ref{value-f}). Then
\begin{equation}\label{eq3-3-1}
\{p(t)\}\times\lbrack P(t),+\infty)\subset D_{x}^{2,+}V(t,\bar{X}(t)),\quad
t\in\lbrack0,T],\text{ }P\text{-a.s.}%
\end{equation}
and%
\begin{equation}\label{eq3-3-2}
D_{x}^{2,-}V(t,\bar{X}(t))\subset\{p(t)\}\times(-\infty,P(t)],\quad
t\in\lbrack0,T],\text{ }P\text{-a.s.}%
\end{equation}
\end{theorem}

\begin{proof} The proof is lengthy, and consists of five steps.
	
\textbf{Step one: Variational state equations.} Fix any $t\in\lbrack0,T]$ and
$x^{1}\in H,$ let $X^{x^{1}}$ be the solution of the following SEE on $[t,T]:$
\begin{equation}
	\left\{
	\begin{aligned}
		dX^{x^{1}}(s)= &\,\, \big[A(s)X^{x^{1}}(s)+a(s,X^{x^{1}}(s),\bar{u}(s))\big]ds \\
		& +\big[B(s)X^{x^{1}}(s)+b(s,X^{x^{1}}(s),\bar{u}(s))\big]dw(s),\quad s\in[t,T], \\
		X^{x^{1}}(t)= &\,\, x^{1}.
	\end{aligned}
	\right.
	\label{EQ3-1}
\end{equation}

We denote $\hat{x}(s):=X^{x^{1}}(s)-\bar{X}(s),$ $s\in\lbrack t,T].$ In
particular, $\hat{x}(t)=X^{x^{1}}(t)-\bar{X}(t)=x^{1}-\bar{X}(t).$ Then
\begin{align*}
\hat{x}(s)  &  =\hat{x}(t)+\int_{t}^{s}\big[A(r)\hat{x}(r)+a(r,\hat{x}%
(r)+\bar{X}(r),\bar{u}(r))-a(r,\bar{X}(r),\bar{u}(r))\big]{d}r\\
&  +\int_{t}^{s}\big[B(r)\hat{x}(r)+b(r,\hat{x}(r)+\bar{X}(r),\bar
{u}(r))-b(r,\bar{X}(r),\bar{u}(r))\big]{d}w(r).
\end{align*}
From Lemma \ref{apriori-see}, we first have
\begin{equation}
\mathbb{E}\Big[  \sup_{t\leq s\leq T}\left\Vert \hat{x}(s)\right\Vert
_{H}^{2\alpha}\Big|\mathcal{F}_{t}\Big]  \leq C\left\Vert \hat{x}(t)\right\Vert
_{H}^{2\alpha},\quad P\text{-a.s.} \label{eq4-13}%
\end{equation}
With $\bar{A}$ and $\bar{B}$ being defined as in Subsection 2.3,  the
equation of $\hat{x}$ reads
\begin{equation}
\hat{x}(s)=\hat{x}(t)+\int_{t}^{s}[\bar{A}(r)\hat{x}(r)+\varepsilon_{1}%
(r)]{d}r+\int_{t}^{s}[\bar{B}(r)\hat{x}(r)+\varepsilon_{2}(r)]{d}w(r)
\label{var-x-1}%
\end{equation}
and
\begin{equation}%
\begin{split}
\hat{x}(s)=\hat{x}(t)+  &  \int_{t}^{s}[\bar{A}(r)\hat{x}(r)+\frac{1}{2}%
\bar{a}_{xx}(r)(\hat{x}(r),\hat{x}(r))+\varepsilon_{3}(r)]\,{d}r\\
&  +\int_{t}^{s}[\bar{B}(r)\hat{x}(r)+\frac{1}{2}\bar{b}_{xx}(r)(\hat
{x}(r),\hat{x}(r))+\varepsilon_{4}(r)]\,{d}w(r),
\end{split}
\label{var-x-2}%
\end{equation}
where
\begin{align*}
\varepsilon_{1}(r)  &  :=\int_{0}^{1}\big\langle a_{x}\left(  r,\bar
{X}(r)+\mu\hat{x}(r),\bar{u}(r)\right)  -\bar{a}_{x}\left(  r\right)  ,\hat
{x}(r)\big\rangle d\mu,\\
\varepsilon_{2}(r)  &  :=\int_{0}^{1}\left\langle b_{x}\left(  r,\bar
{x}(r)+\mu\hat{x}(r),\bar{u}(r)\right)  -\bar{b}_{x}\left(  r\right)  ,\hat
{x}(r)\right\rangle d\mu,\\
\varepsilon_{3}(r)  &  :=\int_{0}^{1}(1-\mu)\big[  a_{xx}\left(  r,\bar
{x}(r)+\mu\hat{x}(r),\bar{u}(r)\right)  -\bar{a}_{xx}\left(  r\right)
\big]  \left(  \hat{x}(r),\hat{x}(r)\right)  d\mu,\\
\varepsilon_{4}(r)  &  :=\int_{0}^{1}(1-\mu)\left[  b_{xx}\big(  r,\bar
{x}(r)+\mu\hat{x}(r),\bar{u}(r)\big)  -\bar{b}_{xx}\left(  r\right)
\right]  \left(  \hat{x}(r),\hat{x}(r)\right)  d\mu.
\end{align*}

\textbf{Step two: Estimates of higher-order remainders.} We have the following
estimates: for any $\alpha\geq2,$
\begin{equation}%
\begin{split}
\mathbb{E}\Big[\int_{t}^{T}\left\Vert \varepsilon_{1}(r)\right\Vert
_{H}^{\alpha}dr\Big|\mathcal{F}_{t}\Big]  &  =o\left(  \left\Vert \hat
{x}(t)\right\Vert _{H}^{\alpha}\right),\quad P\text{-a.s.,}\\
\mathbb{E}\Big[\int_{t}^{T}\left\Vert \varepsilon_{2}(r)\right\Vert
_{H}^{\alpha}dr\Big|\mathcal{F}_{t}\Big]  &  ={o}\left(  \left\Vert \hat
{x}(t)\right\Vert _{H}^{\alpha}\right) ,\quad P\text{-a.s.,}\\
\mathbb{E}\Big[\int_{t}^{T}\left\Vert \varepsilon_{3}(r)\right\Vert
_{H}^{\alpha}dr\Big|\mathcal{F}_{t}\Big]  &  ={o}(\left\Vert \hat{x}(t)\right\Vert
_{H}^{2\alpha}),\quad  P\text{-a.s.,}\\
\mathbb{E}\Big[\int_{t}^{T}\left\Vert \varepsilon_{4}(r)\right\Vert
_{H}^{\alpha}dr\Big|\mathcal{F}_{t}\Big]  &  ={o}(\left\Vert \hat{x}(t)\right\Vert
_{H}^{2\alpha}),\quad P\text{-a.s.}%
\end{split}
\label{Est-1}%
\end{equation}
We only prove the first and third ones, and the others 
can be derived in a similar way. Applying (\ref{eq4-13}),
\begin{align*}
\mathbb{E}\Big[\int_{t}^{T}\left\Vert \varepsilon_{1}(r)\right\Vert
_{H}^{\alpha}dr\Big|\mathcal{F}_{t}\Big]  &  \leq\int_{t}^{T}\mathbb{E}%
\Big[\int_{0}^{1}\b\Vert a_{x}\left(  r,\bar{X}(r)+\mu\hat{x}(r),\bar
{u}(r)\right)  -\bar{a}_{x}\left(  r\right)  \b\Vert _{H}^{\alpha}%
d\mu\left\Vert \hat{x}(r)\right\Vert _{H}^{\alpha}\Big|\mathcal{F}_{t}\Big]dr\\
&  \leq\int_{t}^{T}\mathbb{E}[\left\Vert \hat{x}(r)\right\Vert _{H}^{2\alpha
}|\mathcal{F}_{t}]dr\\
&  \leq C\left\Vert \hat{x}(t)\right\Vert _{H}^{2\alpha}\\
&  =o(\left\Vert \hat{x}(t)\right\Vert _{H}^{\alpha}),\quad P\text{-a.s.}%
\end{align*}
and
\begin{align*}
&  \mathbb{E}\Big[\int_{t}^{T}\left\Vert \varepsilon_{3}(r)\right\Vert
_{H}^{\alpha}dr\Big|\mathcal{F}_{t}\Big]\\
&  \leq\int_{t}^{T}\mathbb{E}\Big[  \int_{0}^{1}(1-\mu)\big|[  a_{xx}\left(
r,\bar{X}(r)+\mu\hat{x}(r),\bar{u}(r)\right)  -\bar{a}_{xx}\left(  r\right)
]  \left(  \hat{x}(r),\hat{x}(r)\right)  \big|^{\alpha}d\mu\Big|\mathcal{F}%
_{t}\Big]  dr\\
&  \leq\Big(\int_{t}^{T}\mathbb{E}\Big[\int_{0}^{1}\big\Vert a_{xx}\left(
r,\bar{X}(r)+\mu\hat{x}(r),\bar{u}(r)\right)  -\bar{a}_{xx}\left(  r\right)
\big\Vert_{\mathfrak{L}(H)}^{2\alpha}d\mu\Big|\mathcal{F}_{t}\Big]dr\Big)^{\frac{1}%
{2}}\left\Vert \hat{x}(t)\right\Vert _{H}^{2\alpha}\\
&  =o(\left\Vert \hat{x}(t)\right\Vert _{H}^{2\alpha}),\quad P\text{-a.s.}%
\end{align*}

\textbf{Step three: Duality relationship.} Applying It\^{o}'s  formula to
$\left\langle p(r),\hat{x}(r)\right\rangle,$ from (\ref{var-x-2}) we get
\begin{equation}
\langle p(s),\hat{x}(s)\rangle=\langle h_{x}(\bar{X}(T)),\hat{x}%
(T)\rangle+\int_{s}^{T}J_{1}(r)dr-\int_{s}^{T}J_{2}(r)dw({r}),\quad 
s\in\lbrack t,T]. \label{w-adjoint1}%
\end{equation}
where
\begin{align*}
J_{1}(s):=  &  \langle k_{x}(s)+k_{y}(s)p(s)+k_{z}(s)q(s),\hat{x}%
(s)\rangle+k_{z}(s)\langle p(s),\bar{B}(s)\hat{x}(s)\rangle-\langle
p(s),\varepsilon_{3}(s)\rangle-\langle q(s),\varepsilon_{4}(s)\rangle\\
&  -\frac{1}{2}[\langle p(s),(\bar{a}_{xx}(s)(\hat{x}(s),\hat{x}%
(s))\rangle+\langle q(s),\bar{b}_{xx}(s)(\hat{x}(s),\hat{x}(s))\rangle],\\
J_{2}(s):=  &  \langle p(s),\bar{B}(s)(\hat{x}(s))\rangle+\langle q(s),\hat
{x}(s)\rangle+\langle p(s),\varepsilon_{4}(s)\rangle+\frac{1}{2}\langle
p(s),\bar{b}_{xx}(s)(\hat{x}(s),\hat{x}(s))\rangle.
\end{align*}
Next, applying Theorem \ref{Myth3-7} to $P$ and $\hat{x}$ in (\ref{var-x-1}) (with $\gamma_{1}=\varepsilon_{1}$, $\gamma_{2}=\varepsilon_{2}$, $x_{0}=\hat{x}(t)$),
and noting from Step two that, for the quantities $\mu_1,\mu_2$ appearing in Theorem \ref{Myth3-7},
\[
\mu_{1}(t)=o(\left\Vert \hat{x}(t)\right\Vert _{H}^{2})\text{ and }\mu
_{2}(t)=O(\left\Vert \hat{x}(t)\right\Vert _{H}^{2}),\quad P\text{-a.s.},
\]
we obtain
\begin{equation}%
\begin{split}
&  \langle P(s)\hat{x}(s),\hat{x}(s)\rangle+\sigma(s)=\langle h_{xx}(\bar
{X}(T))\hat{x}(T),\hat{x}(T)\rangle+\int_{s}^{T}\b[k_{y}(s)\langle
P(s)\hat{x}(s),\hat{x}(s)\rangle\\
&  \ \ \ \ \ \ \ \ +k_{z}(s)\mathcal{Z}(s)+\langle G(s)\hat{x}(s),\hat
{x}(s)\rangle\b]{d}s-\int_{s}^{T}\mathcal{Z}(s){d}w(s),
\end{split}
\label{w-adjiont2}%
\end{equation}
for some processes $(\sigma,\mathcal{Z})\in L_{\mathbb{F}}^{\alpha}(t,T)\times
L_{\mathbb{F}}^{2,\alpha}(t,T)$ satisfying, for any  $\alpha\geq2$,
\begin{equation}
\sup_{s\in\lbrack t,T]}\mathbb{E}[|\sigma(s)|^{\alpha}|\mathcal{F}%
_{t}]=o(\left\Vert \hat{x}(t)\right\Vert _{H}^{2\alpha})\ \ \text{and}%
\ \ \mathbb{E}\Big[\Big(\int_{t}^{T}|\mathcal{Z}(t)|^{2}dt\Big)^{\frac{\alpha
}{2}}\B|\mathcal{F}_{t}\Big]=O(\left\Vert \hat{x}(t)\right\Vert _{H}^{2\alpha
}),\quad P\text{-a.s.} \label{Myeq5-12}%
\end{equation}
Therefore,
\begin{align*}
&  \langle p(t),\hat{x}(t)\rangle+\frac{1}{2}\langle P(t)\hat{x}(t),\hat
{x}(t)\rangle+\frac{1}{2}\sigma(t)=\langle h_{x}(\bar{X}(T)),\hat{x}%
(T)\rangle\\
&  \ \ \ \ \ \ \ \ +\frac{1}{2}\langle h_{xx}(\bar{X}(T))\hat{x}(T),\hat
{x}(T)\rangle+\int_{t}^{T}I_{1}(s){d}s-\int_{t}^{T}I_{2}(s){d}w(s),
\end{align*}
where
\begin{align*}
I_{1}(s)  &  :=\langle k_{x}(s)+k_{y}(s)p(s)+k_{z}(s)q(s),\hat{x}%
(s)\rangle+k_{z}(s)\langle p(s),\bar{B}(s)\hat{x}(s)\rangle+\frac{1}%
{2}\Big\langle\Big\{k_{y}(s)P(s)\\
&  \ \ \ \ +D^{2}k(s)([I_{d},p(s),\bar{B}^{\ast}(s)p(s)+q(s)],[I_{d}%
,p(s),\bar{B}^{\ast}(s)p(s)+q(s)])+k_{z}(s)\langle p(s),\bar{b}_{xx}%
(s)\rangle\Big\}\hat{x}(s),\hat{x}(s)\Big\rangle\\
&  \ \ \ \ +\frac{1}{2}k_{z}(s)\mathcal{Z}(s)-\langle p(s),\varepsilon
_{3}(s)\rangle-\langle q(s),\varepsilon_{4}(s)\rangle,\\
I_{2}(s)  &  :=\langle p(s),\bar{B}(s)\hat{x}(s)\rangle+\langle q(s),\hat
{x}(s)\rangle+\langle p(s),\varepsilon_{4}(s)\rangle+\frac{1}{2}\langle
p(s),\bar{b}_{xx}(s)(\hat{x}(s),\hat{x}(s))\rangle+\frac{1}{2}\mathcal{Z}(s).
\end{align*}

\textbf{Step four: Variational equation for BSDE.} We denote, on $[t,T]$,
\[
Y^{x^{1}}(s)=h(X^{x^{1}}(T))+\int_{s}^{T}k(r,X^{x^{1}}(r),Y^{x^{1}%
}(r),Z^{x^{1}}(r),\bar{u}(r))dr-\int_{s}^{T}Z^{x^{1}}(r)dw({r}).
\]
Then we have
\[%
\begin{split}
&  \hat{y}(s)-\frac{1}{2}\sigma(s)=h(X^{x^{1}}(T))-h(\bar{X}(T))-\langle
h_{x}(\bar{X}(T)),\hat{x}(T)\rangle-\frac{1}{2}\langle h_{xx}(\bar{X}%
(T))\hat{x}(T),\hat{x}(T)\rangle\\
&  +\int_{s}^{T}\Big\{k(r,X^{x^{1}}(r),Y^{x^{1}}(r),Z^{x^{1}}(r),\bar
{u}(r))-k(r,\bar{X}(r),\bar{Y}(r),\bar{Z}(r),\bar{u}(r))-I_{1}(r)\Big\}dr-\int%
_{s}^{T}\hat{z}(r){d}w(r),
\end{split}
\label{Myeq4-13}%
\]
where
\begin{align*}
\hat{y}(s)  &  :=Y^{x^{1}}(s)-\bar{Y}(s)-\langle p(s),\hat{x}(s)\rangle
-\frac{1}{2}\langle P(s)\hat{x}(s),\hat{x}(s)\rangle,\\
\hat{z}(s)  &  :=Z^{x^{1}}(s)-\bar{Z}(s)-I_{2}(s).
\end{align*}
We denote%
\[
I_{3}(s):=\langle p(s),\hat{x}(s)\rangle+\frac{1}{2}\langle P(s)\hat
{x}(s),\hat{x}(s)\rangle.
\]
From  Taylor's expansion,
\begin{equation}%
\begin{split}
&  \hat{y}(s)-\frac{1}{2}\sigma(s)=J_{4}+\int_{s}^{T}\Big\{\tilde{k}%
_{y}(r)(\hat{y}(r)-\frac{1}{2}\sigma(r))+\tilde{k}_{z}(r)\hat{z}(r)+\frac
{1}{2}J_{5}(r)+\frac{1}{2}\tilde{k}_{y}(r)\sigma(r)\\
&  \ \ \ \ \ \ +k_{z}(r)\langle p(r),\varepsilon_{4}(r)\rangle+\langle
p(r),\varepsilon_{3}(r)\rangle+\langle q(r),\varepsilon_{4}(r)\rangle
\Big\}dr-\int_{s}^{T}\hat{z}(r){d}w(r),
\end{split}
\label{EQ3-4}%
\end{equation}
where%
\begin{align*}
&  \tilde{k}_{y}(s):=\int_{0}^{1}k_{y}(s,X^{x^{1}}(s),\bar{Y}(s)+I_{3}%
(s)+\mu\hat{y}(s),\bar{Z}(s)+I_{2}(s)+\mu\hat{z}(s),\bar{u}(s))d\mu,\\
&  \tilde{k}_{z}(s):=\int_{0}^{1}k_{z}(s,X^{x^{1}}(s),\bar{Y}(s)+I_{3}%
(s)+\mu\hat{y}(s),\bar{Z}(s)+I_{2}(s)+\mu\hat{z}(s),\bar{u}(s))d\mu,\\
&  \tilde{D}^{2}k(s):=2\int_{0}^{1}\int_{0}^{1}\mu D^{2}k(s,\bar{X}(s)+\mu
\nu\hat{x}(s),\bar{Y}(s)+\mu\nu I_{3}(s),\bar{Z}(s)+\mu\nu I_{2}(s),\bar
{u}(s))d\mu d\nu,\\
&  J_{3}(s):=k_{z}(s)\langle p(s),\varepsilon_{4}(s)\rangle+\langle
p(s),\varepsilon_{3}(s)\rangle+\langle q(s),\varepsilon_{4}(s)\rangle,\\
&  J_{4}:=h(X^{x^{1}}(T))-h(\bar{X}(T))-\langle h_{x}(\bar{X}(T)),\hat
{x}(T)\rangle-\frac{1}{2}\langle h_{xx}(\bar{X}(T))\hat{x}(T),\hat
{x}(T)\rangle,\\
&  J_{5}(s):=\tilde{D}^{2}k(s)\b ([\hat{x}(s),I_{3}(s),I_{2}(s)],[\hat
{x}(s),I_{3}(s),I_{2}(s)]\b)\\
&  \ \ \ \ \ \ \ \ \ \ \,-\B\langle D^{2}k(s)\b([I_{d},p(s),\bar{B}^{\ast
}(s)p(s)+q(s)],[I_{d},p(s),\bar{B}^{\ast}(s)p(s)+q(s)]\b)\hat{x}(s),\hat
{x}(s)\B\rangle.
\end{align*}
First, we have
\begin{align*}
  \mathbb{E}[|J_4||\mathcal{F}_{t}] & =\frac{1}{2}\mathbb{E}\Big[\b|\langle(\tilde{h}_{xx}(T)-h_{xx}(\bar
{X}(T)))\hat{x}(T),\hat{x}(T)\rangle\b|\Big|\mathcal{F}_{t}\Big]\\
&  \leq\frac{1}{2}\Big(\mathbb{E}\b[
\Vert \tilde{h}_{xx}(T)-h_{xx}(\bar
{X}(T)) \Vert _{\mathfrak{L}(H)}^{2}
\b|\mathcal{F}_{t}\b]\Big)^{\frac{1}{2}%
}\Big(\mathbb{E}\b[ 
\Vert \hat{x}(T) \Vert
_{H}^{4}\b|\mathcal{F}_{t}\b]\Big)^{\frac{1}{2}}\\
&  =o(\left\Vert \hat{x}(t)\right\Vert _{H}^{2}),\ P\text{-a.s.}%
\end{align*}
where%
\[
\tilde{h}_{xx}(T)=2\int_{0}^{1}\int_{0}^{1}\mu h_{xx}\big(\bar{X}(T)+\mu
\nu\hat{x}(T)\big)d\mu d\nu.
\]
Moreover, it is direct to check that $\mathbb{E}\b[\int_{t}^{T}|J_{3}%
(s)|ds\b|\mathcal{F}_{t}\b]=o(\left\Vert \hat{x}(t)\right\Vert _{H}^{2})$. We
write $J_{5}(s)=J_{6}(s)+J_{7}(s),$ where%
\begin{align*}
J_{6}(s):=  &  \b\langle\tilde{D}^{2}k(s)([I_{d},p(s),\bar{B}^{\ast
}(s)p(s)+q(s)],[I_{d},p(s),\bar{B}^{\ast}(s)p(s)+q(s)])\hat{x}(s),\hat
{x}(s)\b\rangle\\
&  -\b\langle D^{2}k(s)([I_{d},p(s),\bar{B}^{\ast}(s)p(s)+q(s)],[I_{d}%
,p(s),\bar{B}^{\ast}(s)p(s)+q(s)])\hat{x}(s),\hat{x}(s)\b\rangle,\\
J_{7}(s):=  &  \tilde{D}^{2}k(s)\b([\hat{x}(s),I_{3}(s),I_{2}(s)],[\hat
{x}(s),I_{3}(s),I_{2}(s)]\b)\\
&  -\b\langle\tilde{D}^{2}k(s)([I_{d},p(s),\bar{B}^{\ast}(s)p(s)+q(s)],[I_{d}%
,p(s),\bar{B}^{\ast}(s)p(s)+q(s)])\hat{x}(s),\hat{x}(s)\b\rangle.
\end{align*}
We only estimate $J_{6}$ and the treatment for $J_{7}$ is similar. Setting
$$\Vert\tilde{D}^{2}k(s)-D^{2}k(s)\Vert:=\Vert\tilde{D}^{2}k(s)-D^{2}%
k(s)\Vert_{\mathfrak{L}_{2}((H\times\mathbb{R\times R)\times}(H\times
\mathbb{R}\times \mathbb{R});\mathbb{R})},$$
 we then have by (3.17) in \cite{LT21} that
\begin{align*}
&  \mathbb{E}\B[\B(\int_{t}^{T}|J_{6}(s)|ds\B)^{2\alpha}\B |\mathcal{F}%
_{t}\B]\\
\leq &\  C\,\mathbb{E}\B[\B(\int_{t}^{T}\Vert\tilde{D}^{2}%
k(s)-D^{2}k(s)\Vert\left((1+\Vert p(s)\Vert_{H}^{2})\Vert\hat{x}(s)\Vert
_{H}^{2}+\Vert p(s)\Vert_{\mathcal{V}}^{2}\Vert\hat{x}(s)\Vert
_{H}^{2}+\Vert q(s)\Vert_{H}^{2}\Vert\hat{x}(s)\Vert_{H}^{2}%
\right)ds\B)^{2\alpha}\B |\mathcal{F}_{t}\B]\\
\leq&\  C\B(\mathbb{E}\B[\int_{t}^{T}\Vert\tilde{D}%
^{2}k(s)-D^{2}k(s)\Vert^{4\alpha}(1+\Vert p(s)\Vert_{H}^{8\alpha
})ds\B |\mathcal{F}_{t}\B]\B)^{\frac{1}{2}}
\B(\mathbb{E}\B[\int_{t}^{T}\Vert\hat{x}(s)\Vert_{H}^{8\alpha
}ds\B |\mathcal{F}_{t}\B]\B)^{\frac{1}{2}}\\
& \ +C\B(\mathbb{E}\B[\B(\int_{t}^{T}\Vert
\tilde{D}^{2}k(s)-D^{2}k(s)\Vert\Vert p(s)\Vert_{\mathcal{V}}^{2}%
ds\B)^{4\alpha}\B |\mathcal{F}_{t}\B]\B)^{\frac{1}{2}}\B(\mathbb{E}%
\B[\sup_{s\in\lbrack t,T]}\Vert\hat{x}(s)\Vert_{H}^{8\alpha}\B |\mathcal{F}%
_{t}\B]\B)^{\frac{1}{2}}\\
&  \ +C\B(\mathbb{E}\B[\B(\int_{t}^{T}\Vert
\tilde{D}^{2}k(s)-D^{2}k(s)\Vert\Vert q(s)\Vert_{H}^{2}ds\B)^{4\alpha
}\B |\mathcal{F}_{t}\B]\B)^{\frac{1}{2}}\B(\mathbb{E}\B[\sup_{s\in\lbrack
t,T]}\Vert\hat{x}(s)\Vert_{H}^{8\alpha}\B |\mathcal{F}_{t}\B]\B)^{\frac{1}{2}}\\[3mm]
=& \ o(\left\Vert \hat{x}(t)\right\Vert _{H}^{4\alpha}),\quad P\text{-a.s.}%
\end{align*}
So, \[\mathbb{E}\B[\B(\int_{t}^{T}|J_{5}(s)|ds\B)^{2\alpha}\B|\mathcal{F}%
_{t}\B]=o(\left\Vert \hat{x}(t)\right\Vert _{H}^{4\alpha}),\quad  P\text{-a.s.}\] Then from the a
priori estimate for  BSDEs and (\ref{Myeq5-12}),
\[
\sup_{s\in\lbrack t,T]}\mathbb{E}[|\hat{y}(s)-\frac{1}{2}\sigma(s)|^{2\alpha
}|\mathcal{F}_{t}]+\mathbb{E}\Big[\Big(\int_{t}^{T}|\hat{z}(s)|^{2}
ds\Big)^{\alpha}\B|\mathcal{F}_{t}\Big]=o(\left\Vert \hat{x}(t)\right\Vert
_{H}^{4\alpha}),\quad P\text{-a.s.}%
\]
Taking into account of (\ref{Myeq5-12}) again,%
\[
\sup_{s\in\lbrack t,T]}\mathbb{E}[|\hat{y}(s)|^{2\alpha}|\mathcal{F}%
_{t}]+\mathbb{E}\Big[\Big(\int_{t}^{T}   |\hat{z}(s)|^{2}ds\Big)^{\alpha
}\B|\mathcal{F}_{t}\Big]=o(\left\Vert \hat{x}(t)\right\Vert _{H}^{4\alpha
}),\quad P\text{-a.s.}%
\]
In particular, $P$\text{-a.s}.,
\begin{equation}
Y^{x^{1}}(t)-\bar{Y}(t)=\left\langle p(t),\hat{x}(t)\right\rangle +\frac{1}%
{2}\langle P(t)\hat{x}(t),\hat{x}(t)\rangle+o(\left\Vert \hat{x}(t)\right\Vert
_{H}^{2}). \label{EQ3-2}%
\end{equation}

\textbf{Step five: Completion of the proof.} Let $M$ be a countable dense subset
of $H.$ We can find a subset $\Omega_{0}\subset\Omega$ such that $P(\Omega
_{0})=1$ and for each $\omega\in\Omega_{0},$
\begin{align*}
V(t,\bar{X}(t,\omega))  &  =\bar{Y}(t,\omega),\text{ }Y^{x^{1}}(t,\omega)\geq
V(t,x^{1}),\ (\ref{EQ3-2})\ \text{holds for all}\ x^{1}\in M,\\
&  \text{ and}\ p(s,\omega)\in H,\text{ }P(s,\omega)\in\mathfrak{L}%
(H),\ \forall s\in\lbrack0,T].
\end{align*}
Fix any $\omega\in\Omega_{0}.$ Then for any $x^{1}\in M,$
\[
Y^{x^{1}}(t,\omega)-\bar{Y}(t,\omega)=\left\langle p(t,\omega),\hat
{x}(t,\omega)\right\rangle +\frac{1}{2}\langle P(t,\omega)\hat{x}%
(t,\omega),\hat{x}(t,\omega)\rangle+o(\left\Vert \hat{x}(t,\omega)\right\Vert
_{H}^{2}).
\]
Thus
\[
V(t,x^{1})-V(t,\bar{X}(t,\omega))\leq\langle p(t,\omega),\hat{x}%
(t,\omega)\rangle+\frac{1}{2}\langle P(t,\omega)\hat{x}(t,\omega),\hat
{x}(t,\omega)\rangle+o(\left\Vert \hat{x}(t,\omega)\right\Vert _{H}%
^{2}),\quad \text{for all}\ x^{1}\in M.
\]
Note that the term $o(|\hat{x}(t,\omega)|^{2})$ in the above inequality
depends only on the size $\left\Vert \hat{x}(t,\omega)\right\Vert _{H}^{2}$
and is independent of $x^{1}.$ Therefore,\ from the continuity of
$V(t,\cdot),$ we obtain that
\begin{equation}
V(t,x^{1})-V(t,\bar{X}(t,\omega))\leq\langle p(t,\omega),\hat{x}%
(t,\omega)\rangle+\frac{1}{2}\langle P(t,\omega)\hat{x}(t,\omega),\hat
{x}(t,\omega)\rangle+o(\left\Vert \hat{x}(t,\omega)\right\Vert _{H}%
^{2}),\  \text{for all}\ x^{1}\in H. \label{EQ4-6}%
\end{equation}
This proves, from the definition of upper-differentials, the inclusion (\ref{eq3-3-1}).

Now we prove the second one. Fix any $\omega$ such that (\ref{EQ4-6}) hold.
For any $(\hat{p},\hat{P})\in D_{x}^{2,-}V(t,\bar{X}(t)),$ we have from
the definition of subdifferentials and (\ref{EQ4-6}) that%
\begin{align*}
0  &  \leq\underset{x^{1}\rightarrow\bar{X}(t)}{\lim\inf}
\frac{V(t,x^{1})-V(t,\bar{X}(t))-\langle\hat{p},x^{1}-\bar{X}%
(t)\rangle-\frac{1}{2}\langle\hat{P}(x^{1}-\bar{X}(t
)),x^{1}-\bar{X}(t)\rangle}{\left\Vert x^{1}-\bar{X}(t
)\right\Vert _{H}^{2}} \\
&  \leq\underset{x^{1}\rightarrow\bar{X}(t)}{\lim\inf}
\frac{\langle p(t)-\hat{p},x^{1}-\bar{X}(t)\rangle+\frac{1}%
{2}\langle(P(t)-\hat{P})(x^{1}-\bar{X}(t)),x^{1}-\bar
{X}(t)\rangle}{\left\Vert x^{1}-\bar{X}(t)\right\Vert _{H}^{2}%
} .
\end{align*}
Then we have 
\[
\hat{p}=p(t)\ \text{and}\ \hat{P}\leq P(t),
\]
which implies (\ref{eq3-3-2}).
\end{proof}

\subsection{Differential in time variable}
In this subsection, we study the relationship between DPP and  MP in the time variables.

For $v\in C([0,T]\times H)$ and $(t,x)\in\lbrack0,T)\times H$, the
 partial superdifferential of $v$ with respect to $t$ is
defined as:
\[
D_{t+}^{1,+}v(t,x)=\Big\{r\in \mathbb{R}\Big|v(s,x)\leq
v(t,x)+r(s-t)+o(|s-t|),\mbox{ as }s\downarrow t\Big\}.
\]
The  partial subdifferential of $v$ with respect to $t$
is defined as:%
\[
D_{t+}^{1,-}v(t,x)=\Big\{r\in \mathbb{R}\Big|v(s,x)\geq
v(t,x)+r(s-t)+o(|s-t|),\mbox{ as }s\downarrow t\Big\}.
\]

\begin{theorem}
\label{rel-time} Assume $(H4),(H5),(H6)$. Suppose $(\bar{X}(\cdot),\bar
{Y}(\cdot),\bar{Z}(\cdot),\bar{u}(\cdot))$ are the optimal 4-tuple of Problem
$(S_{x})$ and $(p(\cdot),q(\cdot)),P(\cdot)$ are the solutions of corresponding
adjoint equations (\ref{adjoint1})  and (\ref{adjoint2}), respectively. Then
\begin{equation}\label{eq3-4-1}
\Big\lbrack-\langle p(t),A(t)\bar{X}(t)\rangle_{\ast}-\langle q(t),B(t)\bar
{X}(t)\rangle+\mathcal{H}_{1}(t,\bar{X}(t),\bar
{Y}(t),\bar{Z}(t)),+\infty\Big)\subseteq D_{t+}^{1,+}V(t,\bar{X}(t)),\quad \text{a.e.,}\ P\text{-a.s.}
\end{equation}
and
\begin{equation}\label{eq3-4-2}
D_{t+}^{1,-}V(t,\bar{X}(t))\subseteq\Big(-\infty,-\langle p(t),A(t)\bar
{X}(t)\rangle_{\ast}-\langle q(t),B(t)\bar{X}(t)\rangle+\mathcal{H}_{1}(t,\bar{X}(t),\bar{Y}(t),\bar{Z}(t))\Big],\quad \text{a.e.,}\ P\text{-a.s.},
\end{equation}
where
\begin{align*}
\mathcal{H}_{1}(t,\bar{X}(t),\bar{Y}(t),\bar{Z}(t))=  &  -\mathcal{H}%
(s,\bar{X}(s),\bar{Y}(s),\bar{Z}(s),\bar{u}(s),p(t),q(t))\\
\ \ \ \  &  +\frac12\B\langle P(t)\b[B(t)\bar{X}(t)+b(t,\bar{X}(t),\bar{u}(t))\b],B(t)\bar
{X}(t)+b(t,\bar{X}(t),\bar{u}(t))\B\rangle.
\end{align*}

\end{theorem}

\begin{proof}
\textbf{Step one.} For any $t$, we take any $\tau\in(t,T)$ and denote by $X^{\tau}$
the solution of the following SEE:
\begin{equation}
	\left\{
	\begin{aligned}
		dX^{\tau}(s)= &\,\, \big[A(s)X^{\tau}(s)+a(s,X^{\tau}(s),\bar{u}(s))\big]ds \\
		& +\big[B(s)X^{\tau}(s)+b(s,X^{\tau}(s),\bar{u}(s))\big]dw(s),\quad s\in[\tau,T], \\
		X^{\tau}(\tau)= &\,\, \bar{X}(t).
	\end{aligned}
	\right.
	\label{EQ4-1}
\end{equation}

We define
\[
\hat{\xi}_{\tau}(s):=X^{\tau}(s)-\bar{X}(s),\quad s\in\lbrack\tau,T].
\]
In particular,
\[
\hat{\xi}_{\tau}(\tau)=X^{\tau}(\tau)-\bar{X}(\tau)=\bar{X}(t)-\bar{X}(\tau).
\]
Then on $[\tau,T],$
\begin{align*}
\hat{\xi}_{\tau}(s)    =&\hat{\xi}_{\tau}(\tau)+\int_{\tau}^{s}\big[A(r)\hat{\xi
}_{\tau}(r)+a(r,X^{\tau}(r),\bar{u}(r))-a(r,\bar{X}(r),\bar{u}(r))\big]{d}r\\
&  +\int_{\tau}^{s}\big[B(r)\hat{\xi}_{\tau}(r)+b(r,X^{\tau}(r),\bar
{u}(r))-b(r,\bar{X}(r),\bar{u}(r))\big]{d}w(r).
\end{align*}
Applying Lemma \ref{apriori-see},
\begin{equation}
\mathbb{E}\Big[\sup_{\tau\leq s\leq T}\Vert\hat{\xi}_{\tau}(s)\Vert
_{H}^{2\alpha}\Big|\mathcal{F}_{\tau}\Big]\leq C\Vert\hat{\xi}_{\tau}%
(\tau)\Vert_{H}^{2\alpha},\quad P\text{-a.s.} \label{EQ3-3}%
\end{equation}
Note that $\bar{X}(t)\in L^{2\alpha}(\mathcal{F}_{t},\mathcal{V})$ for a.e.
$t$, since $\bar{X}\in L_{\mathbb{F}}^{2,\alpha}(0,T;\mathcal{V})$. Then by
Lemma \ref{conti-est-1}, we have (for a.e. $t$)
\[
\mathbb{E}[\Vert\hat{\xi}_{\tau}(\tau)\Vert_{H}^{2\alpha}|\mathcal{F}%
_{t}]=\mathbb{E}\big[\Vert\bar{X}(\tau)-\bar{X}(t)\Vert_{H}^{2\alpha
}\big|\mathcal{F}_{t}\big]\leq C\big(1+\Vert \bar{X}(t)\Vert
_{\mathcal{V}}^{2\alpha}\big)|\tau-t|^{\alpha},\quad P\text{-a.s.,}%
\]
and thus,
\begin{equation}
\mathbb{E}\Big[\sup_{\tau\leq r\leq T}\Vert\hat{\xi}_{\tau}(r)\Vert
_{H}^{2\alpha}\Big|\mathcal{F}_{t}\Big]\leq\mathbb{E}\Big[\mathbb{E}%
\Big[\sup_{\tau\leq r\leq T}\Vert\hat{\xi}_{\tau}(r)\Vert_{H}^{2\alpha
}\Big|\mathcal{F}_{\tau}\Big]\Big|\mathcal{F}_{t}\Big]\leq C\mathbb{E}%
[\Vert\hat{\xi}_{\tau}(\tau)\Vert_{H}^{2\alpha}|\mathcal{F}%
_{t}]\leq C|\tau-t|^{\alpha},\quad P\text{-a.s.} \label{eq4-13-2}%
\end{equation}
We can write the equation of $\hat{\xi}_{\tau}(s)$ as
\begin{equation}
\hat{\xi}_{\tau}(s)=\hat{\xi}_{\tau}(\tau)+\int_{\tau}^{s}[\bar{A}(r)\hat{\xi
}_{\tau}(r)+\varepsilon_{1}(r)]{d}r+\int_{\tau}^{s}[\bar{B}(r)\hat{\xi}_{\tau
}(r)+\varepsilon_{2}(r)]{d}w(r) \label{var-t-1}%
\end{equation}
and
\begin{equation}%
\begin{split}
\hat{\xi}_{\tau}(s)=\hat{\xi}_{\tau}(\tau)+  &  \int_{\tau}^{s}[\bar{A}%
(r)\hat{\xi}_{\tau}(r)+\frac{1}{2}\bar{a}_{xx}(r)(\hat{\xi}_{\tau}(r),\hat
{\xi}_{\tau}(r))+\varepsilon_{3}(r)]\,{d}r\\
&  +\int_{\tau}^{s}[\bar{B}(r)\hat{\xi}_{\tau}(r)+\frac{1}{2}\bar{b}%
_{xx}(r)(\hat{\xi}_{\tau}(r),\hat{\xi}_{\tau}(r))+\varepsilon_{4}%
(r)]\,{d}w(r),
\end{split}
\label{var-t-2}%
\end{equation}
where
\begin{align*}
\varepsilon_{1}(r)  &  :=\int_{0}^{1}\langle a_{x}(r,\bar{X}(r)+\mu\hat{\xi
}_{\tau}(r),\bar{u}(r))-\bar{a}_{x}\left(  r\right)  ,\hat{\xi}_{\tau
}(r)\rangle d\mu,\\
\varepsilon_{2}(r)  &  :=\int_{0}^{1}\langle b_{x}(r,\bar{X}(r)+\mu\hat{\xi
}_{\tau}(r),\bar{u}(r))-\bar{b}_{x}(r),\hat{\xi}_{\tau}(r)\rangle d\mu,\\
\varepsilon_{3}(r)  &  :=\int_{0}^{1}(1-\mu)[a_{xx}(r,\bar{X}(r)+\mu\hat{\xi
}_{\tau}(r),\bar{u}(r))-\bar{a}_{xx}\left(  r\right)  ](\hat{\xi}_{\tau
}(r),\hat{\xi}_{\tau}(r))d\mu,\\
\varepsilon_{4}(r)  &  :=\int_{0}^{1}(1-\mu)[b_{xx}(r,\bar{X}(r)+\mu\hat{\xi
}_{\tau}(r),\bar{u}(r))-\bar{b}_{xx}\left(  r\right)  ](\hat{\xi}_{\tau
}(r),\hat{\xi}_{\tau}(r))d\mu.
\end{align*}
\textbf{Step two.} We have, for any $\alpha\geq2$,
\begin{equation}%
\begin{split}
\mathbb{E}\Big[\int_{\tau}^{T}\left\Vert \varepsilon_{1}(r)\right\Vert
_{H}^{\alpha}dr\Big|\mathcal{F}_{\tau}\Big]  &  \leq C\Vert\hat{\xi}_{\tau}%
(\tau)\Vert_{H}^{2\alpha},\quad P\text{-a.s.,}\\
\mathbb{E}\Big[\int_{\tau}^{T}\left\Vert \varepsilon_{2}(r)\right\Vert
_{H}^{\alpha}dr\Big|\mathcal{F}_{\tau}\Big]  &  \leq C\Vert\hat{\xi}_{\tau}%
(\tau)\Vert_{H}^{2\alpha},\quad P\text{-a.s.,}\\
\mathbb{E}\Big[\int_{\tau}^{T}\left\Vert \varepsilon_{3}(r)\right\Vert
_{H}^{\alpha}dr\Big|\mathcal{F}_{t}\Big]  &  =o(|\tau-t|^{\alpha}),\quad P\text{-a.s.,}\\
\mathbb{E}\Big[\int_{\tau}^{T}\left\Vert \varepsilon_{4}(r)\right\Vert
_{H}^{\alpha}dr\Big|\mathcal{F}_{t}\Big]  &  =o(|\tau-t|^{\alpha}),\quad P\text{-a.s.}%
\end{split}
\label{Est-1-2}%
\end{equation}
Indeed, from (\ref{EQ3-3}), we obtain
\begin{align*}
\mathbb{E}\Big[\int_{\tau}^{T}\left\Vert \varepsilon_{1}(r)\right\Vert
_{H}^{\alpha}dr\Big|\mathcal{F}_{\tau}\Big]  &  \leq\int_{\tau}^{T}\mathbb{E}%
\Big[\int_{0}^{1}\Vert a_{x}(r,\bar{X}(r)+\mu\hat{\xi}_{\tau}(r),\bar
{u}(r))-\bar{a}_{x}\left(  r\right)  \Vert_{H}^{\alpha}d\mu\Vert\hat{\xi
}_{\tau}(r)\Vert_{H}^{\alpha}\Big|\mathcal{F}_{\tau}\Big]dr\\
&  \leq\int_{\tau}^{T}\mathbb{E}[\Vert\hat{\xi}_{\tau}(r)\Vert
_{H}^{2\alpha}|\mathcal{F}_{\tau}]dr\\
&  \leq C\Vert\hat{\xi}_{\tau}(\tau)\Vert_{H}^{2\alpha},\quad P\text{-a.s.,}%
\end{align*}
and by (\ref{eq4-13-2}),
\begin{align*}
&  \mathbb{E}\Big[\int_{\tau}^{T}\left\Vert \varepsilon_{3}(r)\right\Vert
_{H}^{\alpha}dr\Big|\mathcal{F}_{t}\Big]\\
&  \leq\Big(\int_{\tau}^{T}\mathbb{E}\Big[\int_{0}^{1}\big\Vert a_{xx}%
(r,\bar{X}(r)+\mu\hat{\xi}_{\tau}(r),\bar{u}(r))-\bar{a}_{xx}\left(
r\right)  \big\Vert_{\mathfrak{L}(H)}^{2\alpha}d\mu\Big|\mathcal{F}%
_{t}\Big]dr\Big)^{\frac{1}{2}}\Big(\int_{\tau}^{T}\mathbb{E}\Big[\int_{0}%
^{1}\Vert\hat{\xi}_{\tau}(r)\Vert_{H}^{4\alpha}d\mu\Big|\mathcal{F}%
_{t}\Big]dr\Big)^{\frac{1}{2}}\\
&  =o(|\tau-t|^{\alpha}),\quad P\text{-a.s.}%
\end{align*}
The other two estimates can be derived similarly.

\textbf{Step three.} Applying It\^{o}'s  formula to $\langle p(r),\hat{\xi}_{\tau
}(r)\rangle,$ from (\ref{var-t-2}) we obtain
\begin{equation}
\langle p(s),\hat{\xi}_{\tau}(s)\rangle=\langle h_{x}(\bar{X}(T)),\hat{\xi
}_{\tau}(T)\rangle+\int_{s}^{T}J_{1}(r)dr-\int_{s}^{T}J_{2}(r)dw({r}).
\label{T-adjoint1}%
\end{equation}
where
\begin{align*}
J_{1}(s):=  &  \langle k_{x}(s)+k_{y}(s)p(s)+k_{z}(s)q(s),\hat{\xi}_{\tau
}(s)\rangle+k_{z}(s)\langle p(s),\bar{B}(s)\hat{\xi}_{\tau}(s)\rangle-\langle
p(s),\varepsilon_{3}(s)\rangle-\langle q(s),\varepsilon_{4}(s)\rangle\\
&  -\frac{1}{2}\b[\langle p(s),(\bar{a}_{xx}(s)(\hat{\xi}_{\tau}(s),\hat{\xi
}_{\tau}(s))\rangle+\langle q(s),\bar{b}_{xx}(s)(\hat{\xi}_{\tau}(s),\hat{\xi
}_{\tau}(s))\rangle\b],\\
J_{2}(s):=  &  \langle p(s),\bar{B}(s)\hat{\xi}_{\tau}(s)\rangle+\langle
q(s),\hat{\xi}_{\tau}(s)\rangle+\langle p(s),\varepsilon_{4}(s)\rangle
+\frac{1}{2}\langle p(s),\bar{b}_{xx}(s)(\hat{\xi}_{\tau}(s),\hat{\xi}_{\tau
}(s))\rangle.
\end{align*}
Applying Theorem \ref{Myth3-7} to $P$ and $\hat{\xi}_{\tau}$ in
(\ref{var-t-1}),  with $\tilde{t}=\tau,$ $\gamma_{1}=\varepsilon_{1}$, $\gamma
_{2}=\varepsilon_{2}$, $x_{0}=\hat{\xi}_{\tau}(\tau)$, and noting that the quantities $\mu_1,\mu_2$ given in that theorem satisfy, according
to Step 2,
\[
\mu_{1}(\tau)=O(\Vert\hat{\xi}_{\tau}(\tau)\Vert_{H}^{3})\text{ and }\mu
_{2}(\tau)=O(\Vert\hat{\xi}_{\tau}(\tau)\Vert_{H}^{2}), \quad P\text{-a.s.}
\]
we obtain
\begin{equation}%
\begin{split}
&  \langle P(s)\hat{\xi}_{\tau}(s),\hat{\xi}_{\tau}(s)\rangle+\sigma
(s)=\langle h_{xx}(\bar{X}(T))\hat{\xi}_{\tau}(T),\hat{\xi}_{\tau}%
(T)\rangle+\int_{s}^{T}\b[k_{y}(r)\langle P(r)\hat{\xi}_{\tau}(r),\hat{\xi
}_{\tau}(r)\rangle\\
&  \ \ \ \ \ \ \ \ +k_{z}(r)\mathcal{Z}(r)+\langle G(r)\hat{\xi}_{\tau
}(r),\hat{\xi}_{\tau}(r)\rangle\b]{d}r-\int_{s}^{T}\mathcal{Z}(r){d}w(r),\quad
s\in\lbrack\tau,T],
\end{split}
\label{T-adjiont2}%
\end{equation}
for some processes $(\sigma,\mathcal{Z})\in L_{\mathbb{F}}^{\alpha}%
(\tau,T)\times L_{\mathbb{F}}^{2,\alpha}(\tau,T)$ satisfying,  for any $\alpha\geq2,$
\[
\sup_{s\in\lbrack\tau,T]}\Big(\mathbb{E}\big[|\sigma(s)|^{\alpha
}\big|\mathcal{F}_{\tau}\big]\Big)^{\frac{1}{\alpha}}\leq C\Vert \hat
{\xi}_{\tau}(\tau)\Vert _{H}^{3}\ \text{and}\ \Big(\ \mathbb{E}%
\Big[\Big(\int_{\tau}^{T}
|\mathcal{Z}(s)|^{2}
ds\Big)^{\frac{\alpha}{2}%
}\Big|\mathcal{F}_{\tau}\Big]\Big)^{\frac{1}{\alpha}}\leq C\Vert \hat
{\xi}_{\tau}(\tau)\Vert _{H}^{2}, \quad P\text{-a.s.}
\]
Moreover, according to (\ref{eq4-13-2}),    for any $\alpha\geq2,$
\[
\sup_{s\in\lbrack\tau,T]}\Big(\mathbb{E}\big[|\sigma(s)|^{\alpha
}\big|\mathcal{F}_{t}\big]\Big)^{\frac{1}{\alpha}}=O(|\tau-t|^{\frac{3}{2}%
})\ \text{and}\ \Big(  \mathbb{E}\Big[\Big(\int_{\tau}^{T}|\mathcal{Z}(s)|^{2}
ds\Big)^{\frac{\alpha}{2}%
}\Big|\mathcal{F}_{\tau}\Big]\Big)^{\frac
{1}{\alpha}}=O(|\tau-t|), \quad P\text{-a.s.}
\]
Consequently, on $[\tau,T]$
\begin{align*}
&  \langle p(s),\hat{\xi}_{\tau}(s)\rangle+\frac{1}{2}\langle P(s)\hat{\xi
}_{\tau}(s),\hat{\xi}_{\tau}(s)\rangle+\frac{1}{2}\sigma(s)=\langle h_{x}%
(\bar{X}(T)),\hat{\xi}_{\tau}(T)\rangle\\
&  \ \ \ \ \ \ \ \ +\frac{1}{2}\langle h_{xx}(\bar{X}(T))\hat{\xi}_{\tau
}(T),\hat{\xi}_{\tau}(T)\rangle+\int_{s}^{T}I_{1}(s){d}s-\int_{s}^{T}%
I_{2}(s){d}w(s),
\end{align*}
where
\begin{align*}
I_{1}(s)  &  :=\langle k_{x}(s)+k_{y}(s)p(s)+k_{z}(s)q(s),\hat{\xi}_{\tau
}(s)\rangle+k_{z}(s)\langle p(s),\bar{B}(s)\hat{\xi}_{\tau}(s)\rangle+\frac
{1}{2}\B\langle\B\{k_{y}(s)P(s)\\
&  \ \ \ \ +D^{2}k(s)\b([I_{d},p(s),\bar{B}^{\ast}(s)p(s)+q(s)],[I_{d}%
,p(s),\bar{B}^{\ast}(s)p(s)+q(s)]\b)\\&  \ \ \ \ +k_{z}(s)\langle p(s),\bar{b}_{xx}%
(s)\rangle\B\}\hat{\xi}_{\tau}(s),\hat{\xi}_{\tau}(s)\B\rangle
 +\frac{1}{2}k_{z}(s)\mathcal{Z}(s)-\langle p(s),\varepsilon
_{3}(s)\rangle-\langle q(s),\varepsilon_{4}(s)\rangle,\\
I_{2}(s)  &  :=\langle p(s),\bar{B}(s)\hat{\xi}_{\tau}(s)\rangle+\langle
q(s),\hat{\xi}_{\tau}(s)\rangle+\langle p(s),\varepsilon_{4}(s)\rangle
+\frac{1}{2}\langle p(s),\bar{b}_{xx}(s)(\hat{\xi}_{\tau}(s),\hat{\xi}_{\tau
}(s))\rangle+\frac{1}{2}\mathcal{Z}(s).
\end{align*}

We denote, on $[\tau,T]$,
\[
Y^{\tau}(s)=h(X^{\tau}(T))+\int_{s}^{T}k(r,X^{\tau}(r),Y^{\tau}(r),Z^{\tau
}(r),\bar{u}(r))dr-\int_{s}^{T}Z^{\tau}(r)dw({r}).
\]
Then
\begin{equation}%
\begin{split}
&  \hat{y}(s)-\frac{1}{2}\sigma(s)=h(X^{x^{1}}(T))-h(\bar{X}(T))-\langle
h_{x}(\bar{X}(T)),\hat{\xi}_{\tau}(T)\rangle-\frac{1}{2}\langle h_{xx}(\bar
{X}(T))\hat{\xi}_{\tau}(T),\hat{\xi}_{\tau}(T)\rangle\\
&  +\int_{s}^{T}\B\{k(r,X^{\tau}(r),Y^{\tau}(r),Z^{\tau}(r),\bar{u}%
(r))-k(r,\bar{X}(r),\bar{Y}(r),\bar{Z}(r),\bar{u}(r))-I_{1}(r)\B\}dr-\int%
_{s}^{T}\hat{z}(r){d}w(r),
\end{split}
\label{Myeq4-13-2}%
\end{equation}
where
\begin{align*}
\hat{y}(s)  &  :=Y^{\tau}(s)-\bar{Y}(s)-\langle p(s),\hat{\xi}_{\tau
}(s)\rangle-\frac{1}{2}\langle P(s)\hat{\xi}_{\tau}(s),\hat{\xi}_{\tau
}(s)\rangle,\\
\hat{z}(s)  &  :=Z^{\tau}(s)-\bar{Z}(s)-I_{2}(s).
\end{align*}
Similar to Step 4 in the proof of Theorem \ref{diff-space},
we obtain (for a.e. $t)$%
\[
\sup_{s\in\lbrack {\tau},T]}\mathbb{E}[|\hat{y}(s)|^{2\alpha}|\mathcal{F}%
_{t}]+\mathbb{E}\Big[\Big(\int_{{\tau}}^{T}
|\hat{z}(s)|^{2}
 ds\Big)^{\alpha
}\Big|\mathcal{F}_{t}\Big]=o(|\tau-t|^{2\alpha}),\quad P\text{-a.s.}
\]
This implies
\begin{equation}
\mathbb{E}\b[  Y^{\tau}(\tau)-\bar{Y}(\tau)\b|\mathcal{F}_{t}\b]
=\mathbb{E}\b[\langle p(\tau),\hat{\xi}_{\tau}(\tau)\rangle+\frac{1}{2}\langle
P(\tau)\hat{\xi}_{\tau}(\tau),\hat{\xi}_{\tau}(\tau)\rangle\b|\mathcal{F}%
_{t}\b]+o(|\tau-t|),\quad P\text{-a.s.} \label{EQ3-2-2}%
\end{equation}

\textbf{Step four.}  We estimate the right-hand side of (\ref{EQ3-2-2}). From the
formulas of $p$, $\hat{\xi}_{\tau}(\tau)$ and the Lebesgue differentiation theorem,
we obtain
\begin{align*}
&  \mathbb{E}[\langle p(\tau),\hat{\xi}_{\tau}(\tau)\rangle|\mathcal{F}%
_{t}]=\mathbb{E}[\langle p(t),\hat{\xi}_{\tau}(\tau)\rangle\big|\mathcal{F}%
_{t}]+\mathbb{E}[\langle p(\tau)-p(t),\hat{\xi}_{\tau}(\tau)\rangle\big|\mathcal{F}%
_{t}]\\
&  =-\b[\langle p(t),A(t)\bar{X}(t)\rangle_{\ast}+\langle p(t),a(t,\bar
{X}(t),\bar{u}(t))\rangle\b](\tau-t)-\b\langle q(t),B(t)\bar{X}(t)+b(t,\bar
{X}(t),\bar{u}(t))\b\rangle(\tau-t)\\
& \ \ \ +o(|\tau-t|),\quad \text{as}\ \tau\downarrow t,\text{ for a.e. }t,\text{
}P\text{-a.s.}%
\end{align*}
In a similar manner, we also have
\begin{align*}
&  \mathbb{E}\Big[\langle P(\tau)\hat{\xi}_{\tau}(\tau),\hat{\xi}_{\tau}(\tau)\rangle\\
&  \ \  -\B\langle P(\tau)\int_{t}^{\tau}\b[B(r)\bar{X}(r)+b(r,\bar{X}(r),\bar
{u}(r))\b]{d}w(r),\int_{t}^{\tau}\b[B(r)\bar{X}(r)+b(r,\bar{X}(r),\bar{u}%
(r))\b]{d}w(r)\B\rangle\Big|\mathcal{F}_{t}\Big]\\
&  =\mathbb{E}\Big[\langle P(\tau)(-\hat{\xi}_{\tau}(\tau)),-\hat{\xi}_{\tau}(\tau
)\rangle-\B\langle P(\tau)\int_{t}^{\tau}\b[B(r)\bar{X}(r)+b(r,\bar{X}(r),\bar
{u}(r))\b]{d}w(r),-\hat{\xi}_{\tau}(\tau)\B\rangle\Big|\mathcal{F}_{t}\Big]\\
&\ \  +\mathbb{E}\Big[\Big\langle P(\tau)\int_{t}^{\tau}[B(r)\bar{X}(r)+b(r,\bar
{X}(r),\bar{u}(r))]{d}w(r),-\hat{\xi}_{\tau}(\tau)\Big\rangle\\
&  \ \ -\B\langle P(\tau)\int_{t}^{\tau}\b[B(r)\bar{X}(r)+b(r,\bar{X}(r),\bar
{u}(r))\b]{d}w(r),\int_{t}^{\tau}\b[B(r)\bar{X}(r)+b(r,\bar{X}(r),\bar{u}%
(r))\b]{d}w(r)\B\rangle\Big|\mathcal{F}_{t}\Big]\\
&  =\mathbb{E}\Big[\B\langle P(\tau)\int_{t}^{\tau}\b[A(r)\bar{X}(r)+a(r,\bar
{X}(r),\bar{u}(r))\b]{d}r,-\hat{\xi}_{\tau}(\tau)\B\rangle\B|\mathcal{F}_{t}\Big]\\
&\ \  +\mathbb{E}\Big[\Big\langle P(\tau)\int_{t}^{\tau}\b[B(r)\bar{X}%
(r)+b(r,\bar{X}(r),\bar{u}(r))\b]{d}w(r),\int_{t}^{\tau}\b[A(r)\bar{X}%
(r)+a(r,\bar{X}(r),\bar{u}(r))\b]{d}r\Big\rangle\Big|\mathcal{F}_{t}\Big]\\
&  \leq\b(\mathbb{E}[\Vert P(\tau)\Vert_{\mathfrak{L}(H)}^{4}%
|\mathcal{F}_{t}]\b)^{\frac{1}{4}}\Big(\mathbb{E}\Big[\Big\Vert\int%
_{t}^{\tau}\b[A(r)\bar{X}(r)+a(r,\bar{X}(r),\bar{u}(r))\b]{d}r\Big\Vert_{H}%
^{2}\Big|\mathcal{F}_{t}\Big]\Big)^{\frac{1}{2}}\big(\mathbb{E}%
[\Vert\hat{\xi}_{\tau}(\tau)\Vert_{H}^{4}|\mathcal{F}_{t}%
]\big)^{\frac{1}{4}}\\
& \ \ +\big(\mathbb{E}[\Vert P(\tau)\Vert_{\mathfrak{L}(H)}^{4}%
|\mathcal{F}_{t}]\big)^{\frac{1}{4}}\Big(\mathbb{E}\Big[\Big\Vert\int%
_{t}^{\tau}\b[B(r)\bar{X}(r)+b(r,\bar{X}(r),\bar{u}(r))\b]{d}w(r)\Big\Vert_{H}%
^{4}\Big|\mathcal{F}_{t}\Big]\Big)^{\frac{1}{4}}\\
&  \ \ \times \Big(\mathbb{E}\Big[\Big\Vert\int_{t}^{\tau}\b[A(r)\bar{X}%
(r)+a(r,\bar{X}(r),\bar{u}(r))\b]{d}r\Big\Vert_{H}^{2}\Big|\mathcal{F}%
_{t}\Big]\Big)^{\frac{1}{2}}\\
&  =o(|\tau-t|),\quad \text{as}\ \tau\downarrow t,\text{ for a.e. }t,\text{
}P\text{-a.s.,}%
\end{align*}
and 
\begin{align*}
&  \mathbb{E}\Big[\Big\langle P(\tau)\int_{t}^{\tau}\b[B(r)\bar{X}(r)+b(r,\bar
{X}(r),\bar{u}(r))\b]{d}w(r),\int_{t}^{\tau}\b[B(r)\bar{X}(r)+b(r,\bar{X}%
(r),\bar{u}(r))\b]{d}w(r)\Big\rangle\\
&\ \   -\Big\langle P(\tau)\b[B(t)\bar{X}(t)+b(t,\bar{X}(t),\bar{u}(t))\b](%
w(\tau)-w(t)),\b[B(t)\bar{X}(t)+b(t,\bar{X}(t),\bar{u}(t))\b](w(\tau
)-w(t))\B\rangle\B|\mathcal{F}_{t}\Big]\\
&  \leq(\tau-t)^{\frac{1}{2}}\Big(\mathbb{E}\big[\Vert P(\tau)\Vert
_{\mathfrak{L}(H)}^{2}\b|\mathcal{F}_{t}\big]\Big)^{\frac{1}{2}}\\
&\ \ \times\Big(\mathbb{E}\Big[\int_{t}^{\tau}\big\Vert B(r)\bar
{X}(r)+b(r,\bar{X}(r),\bar{u}(r))-B(t)\bar{X}(t)-b(t,\bar{X}(t),\bar
{u}(t))\big\Vert_H^{4}dr\Big|\mathcal{F}_{t}%
\Big]\Big)^{\frac{1}{4}}\\
&  \ \ \times\Big\{
\Big(\mathbb{E}\Big[\int_{t}^{\tau}\big\Vert B(r)\bar
{X}(r)+b(r,\bar{X}(r),\bar{u}(r))\big\Vert_H%
^{4}dr\Big|\mathcal{F}_{t}\Big]\Big)^{\frac{1}{4}} +\Big(\mathbb{E}\Big[\int_{t}^{\tau}\big\Vert B(t)\bar
{X}(t)+b(t,\bar{X}(t),\bar{u}(t))\big\Vert_H^{4}%
{d}r\Big|\mathcal{F}_{t}\Big]\Big)^{\frac{1}{4}}\Big\}\\
&  =o(|\tau-t|),\quad \text{as}\ \tau\downarrow t,\ \text{for a.e.}\ t,\text{
}P\text{-a.s.,}%
\end{align*}
and from Proposition \ref{Myth2-7-2},
\begin{align*}
&\mathbb{E}\Big[\Big\langle P(\tau)\b[B(t)\bar{X}(t)+b(t,\bar{X}(t),\bar
{u}(t))\b](w(\tau)-w(t)),\b[B(t)\bar{X}(t)+b(t,\bar{X}(t),\bar{u}(t))\b](w(\tau
)-w(t))\Big\rangle\\
&\ \ \ -\Big\langle P(t)\b[B(t)\bar{X}(t)+b(t,\bar{X}(t),\bar{u}(t))\b](w(\tau
)-w(t)),\b[B(t)\bar{X}(t)+b(t,\bar{X}(t),\bar{u}(t))\b](w(\tau
)-w(t))\Big\rangle\Big|\mathcal{F}_{t}\Big]\\
&=\mathbb{E}\Big[\Big\langle (P(\tau)-P(t))\b[B(t)\bar{X}(t)+b(t,\bar{X}(t),\bar
{u}(t))\b],B(t)\bar{X}(t)+b(t,\bar{X}(t),\bar{u}(t))\Big\rangle(w(\tau
)-w(t))^2\Big|\mathcal{F}_{t}\Big]\\
&\leq(\tau-t){\small \Big(\mathbb{E}\Big[\Big|\Big\langle (P(\tau)-P(t) ) [B(t)\bar{X}%
(t)+b(t,\bar{X}(t),\bar{u}(t)) ], B(t)\bar{X}(t)+b(t,\bar{X}(t),\bar
{u}(t)) \Big\rangle\Big|^{2}\Big|\mathcal{F}_{t}\Big]\Big)^{\frac{1}{2}}}
\\&=o(|\tau-t|),\quad \text{as}\ \tau\downarrow t,\ P\text{-a.s.}
\end{align*}
Moreover,  
\begin{align*}
&  \mathbb{E}\Big[\B\langle P(t)]\b[B(t)\bar{X}(t)+b(t,\bar{X}(t),\bar{u}%
(t))\b](w(\tau)-w(t)),\b[B(t)\bar{X}(t)+b(t,\bar{X}(t),\bar{u}(t))\b](
w(\tau)-w(t))\B\rangle\B|\mathcal{F}_{t}\Big]\\
&  =\B\langle P(t)\b[B(t)\bar{X}(t)+b(t,\bar{X}(t),\bar{u}(t))\b],B(t)\bar
{X}(t)+b(t,\bar{X}(t),\bar{u}(t))\B\rangle(\tau-t),\quad P\text{-a.s.}%
\end{align*}
Therefore,
\begin{align*}
&  \mathbb{E}\left[  Y^{\tau}(\tau)-\bar{Y}(\tau)\b|\mathcal{F}_{t}\right] \\	&=  \mathbb{E}[\langle p(\tau),\xi_{\tau}(\tau)\rangle|\mathcal{F}%
	_{t}]+ \frac12\mathbb{E}[  \langle P(\tau)\xi_{\tau}(\tau),\xi_{\tau}(\tau
	)\rangle|\mathcal{F}_{t}] \\
	&  =\Big\{-\langle p(t),A(t)\bar{X}(t)\rangle_{\ast}-\langle p(t),a(t,\bar
	{X}(t),\bar{u}(t))\rangle-\b\langle q(t),B(t)\bar{X}(t)+b(t,\bar
	{X}(t),\bar{u}(t))\b\rangle\\
	&\ \  \ +\frac12\B\langle P(t)\b[B(t)\bar{X}(t)+b(t,\bar{X}(t),\bar{u}(t))\b],B(t)\bar
	{X}(t)+b(t,\bar{X}(t),\bar{u}(t))\B\rangle\B\}(\tau-t)+o(|\tau-t|),\  \text{a.e.}\ t,\ P\text{-a.s.}%
\end{align*}
Thus,  for a.e. $t$, we have $P$\text{-a.s.}
\begin{equation}%
\begin{split}
&  \mathbb{E}\b[  Y^{\tau}(\tau)-\bar{Y}(t)\b|\mathcal{F}_{t}\b]
=\mathbb{E}\b[  Y^{\tau}(\tau)-\bar{Y}(\tau)\b|\mathcal{F}_{t}\b]
  +\mathbb{E}\b[  \bar{Y}(\tau)-\bar{Y}(t)\b|\mathcal{F}_{t}\b] \\
& 
=\Big\{-\langle p(t),A(t)\bar{X}(t)\rangle_{\ast}-\langle p(t),a(t,\bar
{X}(t),\bar{u}(t))\rangle-\b\langle q(t),B(t)\bar{X}(t)+b(t,\bar
{X}(t),\bar{u}(t))\b\rangle\\
&\ \  \  +\frac12\B\langle P(t)\b[B(t)\bar{X}(t)+b(t,\bar{X}(t),\bar{u}(t))\b],B(t)\bar
{X}(t)+b(t,\bar{X}(t),\bar{u}(t))\B\rangle
\\
&\ \  \ -k(r,\bar{X}(r),\bar{Y}(r),\bar{Z}(r),\bar{u}(r))\B\}(\tau-t)  +o(|\tau-t|)\\
&  =\B\{-\langle p(t),A(t)\bar{X}(t)\rangle_{\ast}-\langle q(t),B(t)\bar
{X}(t)\rangle+\mathcal{H}_{1}(t,\bar{X}(t),\bar{Y}(t),\bar{Z}%
(t))\B\}(\tau-t) +o(|\tau-t|).
\end{split}
\label{Eq5-5}%
\end{equation}

\textbf{Step five.}  For a.e. $t$, we can find a
subset $\Omega_{0}\subset\Omega$ such that $P(\Omega_{0})=1$ and for each
$\omega\in\Omega_{0},$
\begin{align*}
V(t,\bar{X}(t,\omega))  &  =\bar{Y}(t,\omega),\ Y^{\tau}(\tau
,\omega)\geq V(\tau,\bar{X}(t,\omega)),\ {(\ref{Eq5-5})\ \text{holds for all
rational}}\ \tau>t,\ \text{and}\\
p(s,\omega)  &  \in H,\ P(s,\omega)\in\mathfrak{L}(H),\ \forall
s\in\lbrack0,T].
\end{align*}
Fix any $\omega\in\Omega_{0}.$ We have along all rational $\tau>t$ that
\begin{equation}%
\begin{split}
&  V(\tau,\bar{X}(t,\omega))-V(t,\bar{X}(t,\omega))\\
&  \leq\Big\{-\langle p(t,\omega),A(t)\bar{X}(t,\omega)\rangle_{\ast
}-\langle q(t,\omega),B(t)\bar{X}(t,\omega)\rangle+\mathcal{H}_{1}(t,\bar
{X}(t,\omega),\bar{Y}(t,\omega),\bar{Z}(t,\omega))\Big\}(\tau-t)\\
& \ \  \  +o(|\tau-t|).
\end{split}
\label{eq5-5}%
\end{equation}
From the continuity of $V,$ we obtain that the above relationship holds for all
$\tau\in(t,T]$. This proves (\ref{eq3-4-1}).

Now we consider (\ref{eq3-4-2}). Fix an $\omega\in\Omega$ such that (\ref{eq5-5})
holds for any $\tau\in(t,T]$. Then for any $\hat{q}\in D_{t+}^{1,-}V(t,\bar
{X}(t))$, we have
\begin{align*}
0  &  \leq\liminf\limits_{\tau\downarrow t} \frac{V(\tau,\bar
{X}(t))-V(t,\bar{X}(t))-\hat{q}(\tau-t)}{|\tau-t|} \\
&  \leq\liminf\limits_{\tau\downarrow t} \frac{\left[-\langle
p(t),A(t)\bar{X}(t)\rangle_{\ast
}-\langle q(t),B(t)\bar
{X}(t)\rangle+\mathcal{H}_{1}(t,\bar{X}(t),\bar{Y}%
(t),\bar{Z}(t))-\hat{q}\right](\tau-t)}{|\tau-t|} .
\end{align*}
Then we have
\[
\hat{q}\leq-\langle p(t),A(t)\bar{X}(t)\rangle-\langle q(t),B(t
)\bar{X}(t)\rangle+\mathcal{H}_{1}(t,\bar{X}(t),\bar{Y}(t),\bar
{Z}(t)).
\]
This proves (\ref{eq3-4-2}).
\end{proof}

\section{Regularity of the value function}

In this part,  we shall derive a first-order
regularity of $V$ under some additional assumptions. It provides  a more precise understanding on the
relationship obtained in the last section.

We first present the notions of semiconcavity and semiconvexity.
\begin{definition}
A function $w:H\rightarrow\mathbb{R}$ is called semiconcave if there is a
constant $C>0$ such that
\[
\lambda w(x)+(1-\lambda)w(x^{\prime})-w(\lambda x+(1-\lambda)x^{\prime})\leq
C\lambda(1-\lambda)\Vert x-x^{\prime}\Vert_{H}^{2},
\]
for all $\lambda\in\lbrack0,1]$ and $x,x^{\prime}\in H$. A family of funtions
$w^{\alpha}:H\rightarrow\mathbb{R}$ is called semiconcave uniformly in
$\alpha,$ if $C$ in the above formulation is independent of $\alpha.$ We say a function  $w:H\rightarrow\mathbb{R}$   is  semiconvex if $-w$  
is  semiconcave;  we say  a family of functions $w^{\alpha}:H\rightarrow\mathbb{R}$ is  semiconvex  uniformly in $\alpha$  if  $-w^\alpha$ 
is  semiconcave  uniformly  in $\alpha$
.
\end{definition}

In this  section, we assume that $U$ is a convex subset of some separable
Hilbert space $H_{1}$ and make the following assumption.
\begin{description}
\item[$(B1)$] $a(t,x,v):[0,T]\times H\times U\rightarrow H$ and $b(t,x,v):[0,T]\times
H\times U\rightarrow H$ are Fr\'{e}chet differentiable in $x$
and there exists a constant $C>0$ such that
\[
\Vert a_{x}(t,x,v)-a_{x}(t,x^{\prime},v)\Vert_{\mathfrak{L}(H)}+\Vert b_{x}(t,x,v)-b_{x}(t,x^{\prime},v)\Vert_{\mathfrak{L}(H)}\leq C\Vert
x-x^{\prime}\Vert_{H},
\]
for all $t\in[0,T]$, $x,x^{\prime}\in H$ and $v\in U$.
\end{description}

For any $x_{0},x_{1}\in H$ and $u_{0}(\cdot),u_{1}(\cdot)\in\mathcal{U}$, we
define
\begin{equation}%
\begin{cases}
X_{0}(s)=X^{t,x_{0};u_{0}}(s),\\
X_{1}(s)=X^{t,x_{1};u_{1}}(s).
\end{cases}
\label{x1x0differentcontrols}%
\end{equation}
For any $\lambda\in\lbrack0,1],$ we denote
\begin{equation}%
\begin{cases}
u_{\lambda}(s)=\lambda u_{1}(s)+(1-\lambda)u_{0}(s),\\
x_{\lambda}=\lambda x_{1}+(1-\lambda)x_{0},\\
X_{\lambda}(s)=X^{t,x_{\lambda};u_{\lambda}}(s),\\
X^{\lambda}(s)=\lambda X_{1}(s)+(1-\lambda)X_{0}(s).
\end{cases}
\label{lambdadefinition}%
\end{equation}

\begin{lemma}
\label{Le3-9} Let Assumptions $(H4),(H6)$ and $(B1)$ be satisfied. Take
$u_{0}(\cdot)=u_{1}(\cdot)=u(\cdot)$, for any $u(\cdot
)\in\mathcal{U}$. Then there exist constant
$C\geq0$ such that 
\[
\mathbb{E}\B[  \sup_{s\in\lbrack t,T]}\Vert X^{\lambda}(s)-X_{\lambda
}(s)\Vert_{H}\B]  \leq C\lambda(1-\lambda)\Vert x_{1}-x_{0}\Vert_{H}^{2},
\]
for   all $t\in\lbrack0,T]$,   $\lambda\in\lbrack0,1]$, $x_{0},x_{1}\in H$ and $u(\cdot
)\in\mathcal{U}$.
\end{lemma}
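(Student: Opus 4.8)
The plan is to control the defect $\theta(s):=X^{\lambda}(s)-X_{\lambda}(s)$ between the convexified trajectory and the trajectory started from the convexified datum. Since $X^{\lambda}(t)=\lambda x_{1}+(1-\lambda)x_{0}=x_{\lambda}=X_{\lambda}(t)$, we have $\theta(t)=0$. Because $A,B$ are linear, $X^{\lambda}=\lambda X_{1}+(1-\lambda)X_{0}$ is a genuine solution, and one checks directly from \eqref{x1x0differentcontrols}--\eqref{lambdadefinition} (recall $u_{0}=u_{1}=u$, so $u_{\lambda}=u$) that $\theta$ solves
\[
{d}\theta(s)=[A(s)\theta(s)+g_{a}(s)]\,{d}s+[B(s)\theta(s)+g_{b}(s)]\,{d}w(s),\quad\theta(t)=0,
\]
with $g_{a}(s):=\lambda a(s,X_{1}(s),u(s))+(1-\lambda)a(s,X_{0}(s),u(s))-a(s,X_{\lambda}(s),u(s))$ and the analogous $g_{b}$. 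I would split $g_{a}=g_{a}^{1}+g_{a}^{2}$, where $g_{a}^{1}(s):=\lambda a(s,X_{1}(s),u(s))+(1-\lambda)a(s,X_{0}(s),u(s))-a(s,X^{\lambda}(s),u(s))$ collects the convexity defect of $a(s,\cdot,u(s))$ at $X_{1},X_{0}$ and their convex combination $X^{\lambda}$, while $g_{a}^{2}(s):=a(s,X^{\lambda}(s),u(s))-a(s,X_{\lambda}(s),u(s))$ is Lipschitz in $\theta$ by $(H6)$; the splitting of $g_{b}$ is identical.

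The key estimate is a quantitative convexity bound for $g_{a}^{1}$ and $g_{b}^{1}$. Using the identity $\lambda f(x_{1})+(1-\lambda)f(x_{0})-f(x_{\lambda})=\lambda(1-\lambda)\int_{0}^{1}[f_{x}(x_{\lambda}+t(1-\lambda)(x_{1}-x_{0}))-f_{x}(x_{\lambda}-t\lambda(x_{1}-x_{0}))](x_{1}-x_{0})\,{d}t$ for $f=a(s,\cdot,u(s))$ and $f=b(s,\cdot,u(s))$, together with the Lipschitz continuity of $a_{x},b_{x}$ from $(B1)$ (the two gradient arguments differ by $t(x_{1}-x_{0})$, so the gradients differ by at most $Ct\Vert x_{1}-x_{0}\Vert_{H}$), I obtain
\[
\Vert g_{a}^{1}(s)\Vert_{H}+\Vert g_{b}^{1}(s)\Vert_{\mathcal{L}_{2}^{0}}\leq C\lambda(1-\lambda)\Vert X_{1}(s)-X_{0}(s)\Vert_{H}^{2},\quad P\text{-a.s.}
\]

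To close the estimate I would view $\theta$ as the solution $z$ in Lemma \ref{apriori-see}(i), with state-independent source $g_{a}^{1},g_{b}^{1}$ and Lipschitz part $g_{a}^{2},g_{b}^{2}$, so that with $\alpha=1$ and $\theta(t)=0$,
\[
\mathbb{E}\Big[\sup_{s\in[t,T]}\Vert\theta(s)\Vert_{H}^{2}\Big]\leq C\,\mathbb{E}\Big[\Big(\int_{t}^{T}\Vert g_{a}^{1}(s)\Vert_{H}\,{d}s\Big)^{2}+\int_{t}^{T}\Vert g_{b}^{1}(s)\Vert_{\mathcal{L}_{2}^{0}}^{2}\,{d}s\Big].
\]
Inserting the convexity bound and applying the Cauchy--Schwarz inequality in $s$ reduces the right-hand side to $C\lambda^{2}(1-\lambda)^{2}\,\mathbb{E}[\int_{t}^{T}\Vert X_{1}(s)-X_{0}(s)\Vert_{H}^{4}\,{d}s]$. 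A second use of Lemma \ref{apriori-see}(i), now for $X_{1}-X_{0}$ (a linear SEE with Lipschitz coefficients vanishing at $z=0$ and initial datum $x_{1}-x_{0}$) with $\alpha=2$, yields $\mathbb{E}[\sup_{s}\Vert X_{1}(s)-X_{0}(s)\Vert_{H}^{4}]\leq C\Vert x_{1}-x_{0}\Vert_{H}^{4}$, hence $\mathbb{E}[\sup_{s}\Vert\theta(s)\Vert_{H}^{2}]\leq C\lambda^{2}(1-\lambda)^{2}\Vert x_{1}-x_{0}\Vert_{H}^{4}$; taking square roots and using $\mathbb{E}[\sup_{s}\Vert\theta\Vert_{H}]\leq(\mathbb{E}[\sup_{s}\Vert\theta\Vert_{H}^{2}])^{1/2}$ gives the claim.

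The main obstacle is the quantitative convexity bound above: it is a genuinely second-order estimate, unavailable under mere Lipschitz continuity of $a,b$, and is precisely what assumption $(B1)$ is designed to supply; the accompanying technical point is the matching of moments, since controlling the squared source term forces the fourth moment of $X_{1}-X_{0}$ and hence the use of Lemma \ref{apriori-see} with two different exponents $\alpha$.
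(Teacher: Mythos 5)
Your proposal is correct and follows essentially the same route as the paper: the same first-order Taylor identity for the convexity defect of $a(s,\cdot,u)$ and $b(s,\cdot,u)$, bounded by $C\lambda(1-\lambda)\Vert X_{1}(s)-X_{0}(s)\Vert_{H}^{2}$ via the Lipschitz continuity of $a_{x},b_{x}$ in $(B1)$, followed by a second-moment estimate for $X^{\lambda}-X_{\lambda}$, a fourth-moment bound on $X_{1}-X_{0}$, and Jensen's inequality. The only cosmetic difference is that you close the SEE estimate by invoking Lemma \ref{apriori-see}(i) explicitly (which is cleaner in the presence of the unbounded operators $A,B$), whereas the paper runs the B-D-G plus Gr\"{o}nwall argument directly.
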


\begin{proof}
 Note that, for any $x_{1},x_{0}\in H,v\in U,$
\begin{align*}
&  \left\Vert \lambda a(s,x_{1},v)+(1-\lambda)a(s,x_{0},v)-a(s,x_{\lambda
},v)\right\Vert _{H}\\
&  =\lambda(1-\lambda)\int_{0}^{1}a_{x}(s,x_{\lambda}+\theta(1-\lambda
)(x_{1}-x_{0}),v)(x_{1}-x_{0})d\theta\\
&  \ \ \ +\lambda(1-\lambda)\int_{0}^{1}a_{x}(s,x_{\lambda}+\theta\lambda
(x_{0}-x_{1}),v)(x_{0}-x_{1})d\theta\\
&  \leq\lambda(1-\lambda)\left\Vert x_{1}-x_{0}\right\Vert _{H}\int_{0}%
^{1}\b\Vert a_{x}(s,x_{\lambda}+\theta(1-\lambda)(x_{1}-x_{0}),v)-a_{x}%
(s,x_{\lambda}+\theta\lambda(x_{0}-x_{1}),v)\b\Vert_{\mathfrak{L}(H)}d\theta\\
&  \leq C\lambda(1-\lambda)\left\Vert x_{1}-x_{0}\right\Vert _{H}^{2}.
\end{align*}
Similarly,
\begin{align*}
&  \b\Vert \lambda b(s,x_{1},v)+(1-\lambda)b(s,x_{0},v)-b(s,x_{\lambda
},v)\b\Vert _H \leq C\lambda(1-\lambda)\left\Vert x_{1}-x_{0}\right\Vert _{H}^{2}.
\end{align*}
Then from the above two inequalities and the B-D-G inequality,
\begin{align*}
&  \mathbb{E}\Big[\sup_{s\in\lbrack t,T]}\Vert X^{\lambda}(s)-X_{\lambda
}(s)\Vert_{H}^{2}\Big]\\
&  \leq C\mathbb{E}\Big[\sup_{s\in\lbrack t,T]}\int_{t}^{s}\left\Vert
\lambda a(r,X_{1}(r),u(r))+(1-\lambda)a(r,X_{0}(r),u(r))-a(r,X^{\lambda
}(r),u(r))\right\Vert _{H}^{2}dr\Big]\\
&  \ \ \ +C\mathbb{E}\Big[\int_{t}^{T}\left\Vert \lambda b(r,X_{1}(r),u(r))+(1-\lambda
)b(r,X_{0}(r),u(r))-b(r,X^{\lambda}(r),u(r))\right\Vert _{H
}^{2}dr\Big]\\
&  \ \ \ +C\mathbb{E}\Big[\sup_{s\in\lbrack t,T]}\int_{t}^{s}\b\Vert a(r,X^{\lambda
}(r),u(r))-a(r,X_{\lambda}(r),u(r))\b\Vert_{H}^{2}dr\Big]\\
&  \ \ \ +C\mathbb{E}\Big[\int_{t}^{T}\b\Vert b(r,X^{\lambda
}(r),u(r))-b(r,X_{\lambda}(r),u(r))\b\Vert_H^{2}dr\Big]\\
&  \leq C\lambda^{2}(1-\lambda)^{2}\mathbb{E}\Big[\int_{t}^{T}\Vert X_{1}%
(r)-X_{0}(r)\Vert_{H}^{4}dr\Big]+C\mathbb{E}\Big[\int_{t}^{T}\Vert X^{\lambda}(r)-X_{\lambda
}(r)\Vert_{H}^{2}dr\Big]\\
&  \leq C\lambda^{2}(1-\lambda)^{2}\Vert x_{1}-x_{0}\Vert_{H}^{4}%
+C\mathbb{E}\Big[\int_{t}^{T}\Vert X^{\lambda}(r)-X_{\lambda}(r)\Vert_{H}%
^{2}dr\Big].
\end{align*}
Applying Gr\"{o}nwall's inequality, we get%
\[
\mathbb{E}\Big[\sup_{s\in\lbrack t,T]}\Vert X^{\lambda}(s)-X_{\lambda}%
(s)\Vert_{H}^{2}\Big]\leq C\lambda^{2}(1-\lambda)^{2}\Vert x_{1}-x_{0}%
\Vert_{H}^{4}.
\]
Therefore,
\[
\mathbb{E}\Big[\sup_{s\in\lbrack t,T]}\Vert X^{\lambda}(s)-X_{\lambda}%
(s)\Vert_{H}\Big]\leq C\lambda(1-\lambda)\Vert x_{1}-x_{0}\Vert_{H}^{2},
\]
which completes the proof.
\end{proof}

Now we assume:
\begin{description}
\item[$(B2)$] $k(t,x,y,z,v)$ is semiconcave in $x$ and concave in $(y,z)$, uniformly in $(t,v)$, that is,  for some constant $C_1>0$,
\begin{align*}
&\lambda k(t,x,y,z,v)+(1-\lambda)k(t,x^{\prime},y^{\prime},z^{\prime},v)-k(t,\lambda x+(1-\lambda)x^{\prime},\lambda y+(1-\lambda)y^{\prime},\lambda z+(1-\lambda)z^{\prime},v)\\
&\leq
C_1\lambda(1-\lambda)\Vert x-x^{\prime}\Vert_{H}^{2},
\end{align*} for all $\lambda\in\lbrack0,1]$, $x,x^{\prime}\in H$, $t\in [0,T]$ and $v\in U$; $h(x)$
is semiconcave in $x$  for some constant $C_2>0$.
\end{description}
\begin{lemma}\label{Le3-7}
Let Assumptions $(H4)$, $(H6)$, $(B1)$ and $(B2)$   be satisfied, and take
$u_{0}(\cdot)=u_{1}(\cdot)=u(\cdot)$,  for any $u(\cdot
)\in\mathcal{U}$.  For $\lambda\in\lbrack0,1]$, we denote the solution
to BSDE (\ref{BSDE1-2}) for $(t,x_{1}),$ $(t,x_{0})$ and $(t,x_{\lambda
})$ by $(Y_{1}(\cdot),Z_{1}(\cdot)),$ $(Y_{0}(\cdot),Z_{0}(\cdot))$ and
$(Y_{\lambda}(\cdot),Z_{\lambda}(\cdot)),$ respectively. We also denote%
\[
(Y^{\lambda}(\cdot),Z^{\lambda}(\cdot))=(\lambda Y_{1}(\cdot)+(1-\lambda
)Y_{0}(\cdot),\lambda Z_{1}(\cdot)+(1-\lambda)Z_{0}(\cdot)).
\]
Then
\[
Y^{\lambda}(t)-Y_{\lambda}(t)\leq C\lambda(1-\lambda)\Vert x_{1}-x_{0}%
\Vert_{H}^{2}.
\]
for all $\lambda\in\lbrack0,1]$, $x_{0},x_{1}\in H$ and $u(\cdot
)\in\mathcal{U}$.
\end{lemma}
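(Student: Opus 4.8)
The plan is to control the convexity defect $\delta Y:=Y^{\lambda}-Y_{\lambda}$ of the recursive utility by a purely second-order quantity, through a comparison argument for the BSDE it solves. Since $u_{0}=u_{1}=u$ we have $u_{\lambda}=u$, so $(Y_{\lambda},Z_{\lambda})$ solves (\ref{BSDE1-2}) with data $\big(h(X_{\lambda}(T)),\,k(\cdot,X_{\lambda},\cdot,\cdot,u)\big)$, whereas the convex combination $(Y^{\lambda},Z^{\lambda})$ solves the BSDE with explicit terminal value $\lambda h(X_{1}(T))+(1-\lambda)h(X_{0}(T))$ and explicit generator $g^{\lambda}(s):=\lambda k(s,X_{1},Y_{1},Z_{1},u)+(1-\lambda)k(s,X_{0},Y_{0},Z_{0},u)$. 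Hence $(\delta Y,\delta Z):=(Y^{\lambda}-Y_{\lambda},Z^{\lambda}-Z_{\lambda})$ solves a scalar BSDE with terminal value $\xi_{T}:=\lambda h(X_{1}(T))+(1-\lambda)h(X_{0}(T))-h(X_{\lambda}(T))$ and driver $D(s):=g^{\lambda}(s)-k(s,X_{\lambda},Y_{\lambda},Z_{\lambda},u)$. I would establish the one-sided bound by estimating $\xi_{T}$ and $D(s)$ \emph{from above} and invoking the comparison theorem for classical BSDEs; only upper bounds are needed, because the weight process in the linear representation of $\delta Y$ is positive.

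Writing $X^{\lambda}:=\lambda X_{1}+(1-\lambda)X_{0}$ and inserting $h(X^{\lambda}(T))$, the semiconcavity of $h$ from $(B2)$ gives the convexity-defect bound $C\lambda(1-\lambda)\Vert X_{1}(T)-X_{0}(T)\Vert_{H}^{2}$, while the Lipschitz property of $h$ from $(H6)$ controls $h(X^{\lambda}(T))-h(X_{\lambda}(T))$ by $C\Vert X^{\lambda}(T)-X_{\lambda}(T)\Vert_{H}$. For the driver I would insert the four intermediate values $k(s,X_{1},Y^{\lambda},Z^{\lambda},u),\ k(s,X_{0},Y^{\lambda},Z^{\lambda},u),\ k(s,X^{\lambda},Y^{\lambda},Z^{\lambda},u)$ and $k(s,X_{\lambda},Y^{\lambda},Z^{\lambda},u)$, which split $D(s)$ into: a convexity-defect term $I_{sc}(s):=\lambda k(s,X_{1},Y^{\lambda},Z^{\lambda},u)+(1-\lambda)k(s,X_{0},Y^{\lambda},Z^{\lambda},u)-k(s,X^{\lambda},Y^{\lambda},Z^{\lambda},u)\le C\lambda(1-\lambda)\Vert X_{1}-X_{0}\Vert_{H}^{2}$ by semiconcavity of $k$ in $x$; a term $k(s,X^{\lambda},Y^{\lambda},Z^{\lambda},u)-k(s,X_{\lambda},Y^{\lambda},Z^{\lambda},u)$ bounded by $C\Vert X^{\lambda}-X_{\lambda}\Vert_{H}$ via Lipschitz continuity in $x$; a term $k(s,X_{\lambda},Y^{\lambda},Z^{\lambda},u)-k(s,X_{\lambda},Y_{\lambda},Z_{\lambda},u)$ which, after linearization, equals $a(s)\delta Y(s)+\langle\beta(s),\delta Z(s)\rangle_{\mathcal{L}_{2}^{0}}$ with $a,\beta$ bounded by the Lipschitz constant of $k$ in $(y,z)$; and a remaining term $I_{yz}(s)$ gathering the $(y,z)$-variations.

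The crux is $I_{yz}(s)=\lambda[k(s,X_{1},Y_{1},Z_{1},u)-k(s,X_{1},Y^{\lambda},Z^{\lambda},u)]+(1-\lambda)[k(s,X_{0},Y_{0},Z_{0},u)-k(s,X_{0},Y^{\lambda},Z^{\lambda},u)]$, for which a naive Lipschitz bound only yields $C\lambda(1-\lambda)(|Y_{1}-Y_{0}|+\Vert Z_{1}-Z_{0}\Vert_{\mathcal{L}_{2}^{0}})$, a \emph{first}-order quantity that is too large. The key observation is that the identities $Y_{1}-Y^{\lambda}=(1-\lambda)(Y_{1}-Y_{0})$ and $Y_{0}-Y^{\lambda}=-\lambda(Y_{1}-Y_{0})$ force a cancellation: expressing each bracket through the gradient of $k$ in $(y,z)$ along the connecting segment, one gets $I_{yz}(s)=\lambda(1-\lambda)\big[(k_{y}^{(1)}(s)-k_{y}^{(0)}(s))(Y_{1}-Y_{0})+\langle k_{z}^{(1)}(s)-k_{z}^{(0)}(s),Z_{1}-Z_{0}\rangle_{\mathcal{L}_{2}^{0}}\big]$, where $k_{y}^{(i)},k_{z}^{(i)}$ are the averaged partial gradients based at $X_{i}$. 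Using that $Dk$ is Lipschitz in $(x,y,z)$ (equivalently $D^{2}k$ bounded, from $(H5)$), the gradient differences are bounded by $C(\Vert X_{1}-X_{0}\Vert_{H}+|Y_{1}-Y_{0}|+\Vert Z_{1}-Z_{0}\Vert_{\mathcal{L}_{2}^{0}})$, so by Young's inequality $I_{yz}(s)\le C\lambda(1-\lambda)\big(\Vert X_{1}-X_{0}\Vert_{H}^{2}+|Y_{1}-Y_{0}|^{2}+\Vert Z_{1}-Z_{0}\Vert_{\mathcal{L}_{2}^{0}}^{2}\big)$, which is genuinely second order.

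Collecting these bounds, the comparison theorem together with the linear representation of $\delta Y$ (with positive weight generated by $a$ and the bounded-$\beta$ measure change, whose density has moments of all orders) gives $\delta Y(t)\le C\,\mathbb{E}\big[\Vert X^{\lambda}(T)-X_{\lambda}(T)\Vert_{H}+\lambda(1-\lambda)\Vert X_{1}(T)-X_{0}(T)\Vert_{H}^{2}+\int_{t}^{T}(\,\cdots)\,dr\,\big|\,\mathcal{F}_{t}\big]$, the integrand being the sum of the driver upper bounds above. I would then close the estimate using Lemma \ref{Le3-9} (in its conditional form, obtained by the same B--D--G and Gr\"onwall argument), which gives $\mathbb{E}[\sup_{s}\Vert X^{\lambda}(s)-X_{\lambda}(s)\Vert_{H}\,|\,\mathcal{F}_{t}]\le C\lambda(1-\lambda)\Vert x_{1}-x_{0}\Vert_{H}^{2}$, together with Lemma \ref{apriori-see} for the state difference $X_{1}-X_{0}$ and the classical a priori estimates for $(Y_{1}-Y_{0},Z_{1}-Z_{0})$, all of which yield $\mathbb{E}[\sup_{s}\Vert X_{1}-X_{0}\Vert_{H}^{2}+\sup_{s}|Y_{1}-Y_{0}|^{2}+\int_{t}^{T}\Vert Z_{1}-Z_{0}\Vert_{\mathcal{L}_{2}^{0}}^{2}\,dr\,|\,\mathcal{F}_{t}]\le C\Vert x_{1}-x_{0}\Vert_{H}^{2}$ uniformly in $t$ (for $u\in\mathcal{U}^{t}$ the left-hand side is moreover deterministic). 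I expect the main obstacle to be precisely the treatment of $I_{yz}$: one must avoid the naive Lipschitz estimate and instead exploit the $C^{1,1}$-regularity of $k$ in $(y,z)$ so that the ostensibly first-order contribution collapses to order $\Vert x_{1}-x_{0}\Vert_{H}^{2}$; the rest is bookkeeping to confirm that every error term carries the factor $\lambda(1-\lambda)\Vert x_{1}-x_{0}\Vert_{H}^{2}$.
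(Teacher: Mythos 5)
Your argument reaches the correct conclusion, but by a genuinely different route from the paper's. The paper does not work with the difference BSDE directly: it introduces an auxiliary pair $(\tilde y,\tilde z)$ solving a BSDE whose terminal value is $h(X^{\lambda}(T))-h(X_{\lambda}(T))+C\lambda(1-\lambda)\Vert X_{1}(T)-X_{0}(T)\Vert_{H}^{2}$ and whose driver is $k(X^{\lambda},Y_{\lambda},Z_{\lambda},u)-k(X_{\lambda},Y_{\lambda},Z_{\lambda},u)+C\lambda(1-\lambda)\Vert X_{1}-X_{0}\Vert_{H}^{2}$, then compares $Y^{\lambda}$ with $Y_{\lambda}+\tilde y$ via the comparison theorem (using semiconcavity of $h$ and $k$ to dominate terminal value and driver), and finally bounds $\tilde y(t)$ by the a priori BSDE estimate together with Lemma \ref{Le3-9}; what that buys is brevity. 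Your finer decomposition of the driver, and in particular your treatment of $I_{yz}$ --- exploiting $Y_{1}-Y^{\lambda}=(1-\lambda)(Y_{1}-Y_{0})$ and $Y_{0}-Y^{\lambda}=-\lambda(Y_{1}-Y_{0})$ so that the first-order contributions cancel and only a product of increments controlled by the Lipschitz continuity of $Dk$ survives --- addresses a point the paper passes over: the inequality $\lambda k(X_{1},Y_{1},Z_{1},u)+(1-\lambda)k(X_{0},Y_{0},Z_{0},u)\leq k(X^{\lambda},Y^{\lambda},Z^{\lambda},u)+C\lambda(1-\lambda)\Vert X_{1}-X_{0}\Vert_{H}^{2}$ invoked there does not literally follow from semiconcavity in $x$ uniformly in $(y,z,v)$ once the $(y,z)$-arguments differ, and some second-order control of $k$ in $(y,z)$ of exactly the kind you use is required. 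The price is that your argument relies on the boundedness of $D^{2}k$ from $(H5)$, which is not among the stated hypotheses of the lemma (only $(H4)$, $(B1)$, $(B2)$, $(H6)$ are assumed), though it is a standing assumption for the paper's main results; you should either add it explicitly or replace it by joint semiconcavity of $k$ in $(x,y,z)$ with defect measured in $x$ only. Two bookkeeping points you correctly anticipate and that must indeed be carried out: the $L^{2q}$-estimates on the $Z$-increments are needed to pay for the H\"older inequality against the exponential weight (available here since $x_{0},x_{1}$ are deterministic), and the conditional version of Lemma \ref{Le3-9} is needed for general $u\in\mathcal{U}[t,T]$, although for the application to the semiconcavity of $V$ only controls in $\mathcal{U}^{t}[t,T]$ occur and the unconditional form suffices.
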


\begin{proof}
We have on $[t,T]$ that
\begin{equation}%
\begin{split}
&  Y^{\lambda}(s)=\lambda h(X_{1}(T))+(1-\lambda)h(X_{0}(T))\\
& \ \  +\int_{s}^{T}\big[\lambda k(X_{1}(r),Y_{1}(r),Z_{1}(r),u(r))+(1-\lambda
)k(X_{0}(r),Y_{0}(r),Z_{0}(r),u(r))\big]dr -\int_{s}^{T}Z^{\lambda}(r)dw(r)
\end{split}
\label{BSDE4-1}%
\end{equation}
and%
\begin{equation}%
\begin{split}
&  Y_{\lambda}(s)=h(X_{\lambda}(T))+\int_{s}^{T}k(X_{\lambda}(r),Y_{\lambda
}(r),Z_{\lambda}(r),u(r))dr  -\int_{s}^{T}Z_{\lambda}(r)dw(r).
\end{split}
\label{BSDE4-2}%
\end{equation}
We  introduce a new BSDE on $[t,T]$:
\begin{equation}%
\begin{split}
&  \tilde{y}(s)=h(X^{\lambda}(T))-h(X_{\lambda}(T))+C_2\lambda(1-\lambda)\Vert
X_{1}(T)-X_{0}(T)\Vert_{H}^{2}\\
&  \ \ +\int_{s}^{T}\big[k(X^{\lambda}(r),Y_{\lambda}(r)+\tilde{y}(r),Z_{\lambda}(r)+\tilde{z}(r),u(r))-k(X_{\lambda}(r),Y_{\lambda}(r),Z_{\lambda}(r),u(r))\\
& \ \  +C_1\lambda(1-\lambda)\Vert X_{1}(r)-X_{0}(r)\Vert_{H}^{2}\big]dr -\int_{s}^{T}\tilde{z}(r)dw(r).
\end{split}
\label{BSDE4-3}%
\end{equation}
From (\ref{BSDE4-2}) and (\ref{BSDE4-3}), we get
\begin{equation}%
\begin{split}
  Y_{\lambda}(s)+\tilde{y}(s)=&h(X^{\lambda}(T))+C_2\lambda(1-\lambda)\Vert
X_{1}(T)-X_{0}(T)\Vert_{H}^{2}+\int_{s}^{T}\big[k(X^{\lambda}(r),Y_{\lambda}(r)+\tilde{y}(r),Z_{\lambda}(r)+\tilde{z}(r),u(r))\\
&  +C_1\lambda(1-\lambda)\Vert X_{1}(r)-X_{0}(r)\Vert_{H}^{2}\big]dr  -\int_{s}^{T}(Z_{\lambda}(r)+\tilde{z}(r))dw(r).
\end{split}
\label{BSDE4-4}%
\end{equation}
From Assumption (B2), we know that%
\[
\lambda h(X_{1}(T))+(1-\lambda)h(X_{0}(T))\leq h(X^{\lambda}(T))+C_2\lambda
(1-\lambda)\Vert X_{1}(T)-X_{0}(T)\Vert_{H}^{2}%
\]
and
\begin{align*}
&  \lambda k(X_{1}(r),Y_{1}(r),Z_{1}(r),u(r))+(1-\lambda)k(X_{0}%
(r),Y_{0}(r),Z_{0}(r),u(r))\\
&  \leq k(X^{\lambda}(r),Y^{\lambda}(r),Z^{\lambda}(r),u(r))+C_1\lambda
(1-\lambda)\Vert X_{1}(r)-X_{0}(r)\Vert_{H}^{2}.
\end{align*}
Thus, applying the comparison theorem for  BSDEs to (\ref{BSDE4-1})
and (\ref{BSDE4-4}), we derive
\begin{equation}\label{EQ4-11}
Y^{\lambda}(t)\leq Y_{\lambda}(t)+\tilde{y}(t).
\end{equation}
Next, apply the a priori estimate for  BSDEs to (\ref{BSDE4-3}), we have
from Lemma \ref{Le3-9}  that
\begin{align*}
&  \mathbb{E}\Big[\sup\limits_{t\leq s\leq T}|\tilde{y}(s)|^{2}+\int_{t}%
^{T}|\tilde{z}(s)|^{2}ds\Big]\\
&  \leq C\mathbb{E}\b[\Vert h(X^{\lambda}(T))-h(X_{\lambda}(T))\Vert_{H}^{2}+C\lambda(1-\lambda)\Vert
X_{1}(T)-X_{0}(T)\Vert_{H}^{2}\b]\\
& \ \ +C\mathbb{E}\B[\int_{t}^{T}\b|k(X^{\lambda}(s),Y_{\lambda}(s),Z_{\lambda
}(s),u(s))-k(X_{\lambda}(s),Y_{\lambda}(s),Z_{\lambda}(s),u(s))\b|^{2}ds\B]\\
&  \leq C\mathbb{E}\b[\Vert X^{\lambda}(T)-X_{\lambda}(T)\Vert_{H}%
^{2}\b]+C\lambda(1-\lambda)\mathbb{E}\b[\Vert
X_{1}(T)-X_{0}(T)\Vert_{H}^{2}\b]
+C\mathbb{E}\B[\int_{t}^{T}\Vert X^{\lambda}(s)-X_{\lambda}(s)\Vert_{H}%
^{2}ds\B]\\
&  \leq C\lambda(1-\lambda)\Vert x_{1}-x_{0}\Vert_{H}^{2}.
\end{align*}
So,
\[
\tilde{y}(t)\leq C\lambda(1-\lambda)\Vert x_{1}-x_{0}\Vert_{H}^{2}.
\]
Combining this with (\ref{EQ4-11}), we get the desired result.
\end{proof}

\begin{theorem}
Let Assumptions  $(H4)$, $(H6)$, $(B1)$ and $(B2)$  be satisfied. Then the function $V(t,\cdot)$ is semiconcave, uniformly
in $t\in\lbrack0,T].$
\end{theorem}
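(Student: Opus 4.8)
The plan is to derive the semiconcavity inequality directly from the two preceding lemmas, applying the \emph{same} admissible control at the three initial points $x_0$, $x_1$ and $x_\lambda$. Fix $t\in[0,T]$, points $x_0,x_1\in H$ and $\lambda\in[0,1]$, and set $x_\lambda=\lambda x_1+(1-\lambda)x_0$. Since $V(t,x_\lambda)$ is a deterministic function and equals the infimum of $J(t,x_\lambda;u(\cdot))$ over $u(\cdot)\in\mathcal{U}^t[t,T]$, for any $\varepsilon>0$ I would first select a near-optimal control $u(\cdot)\in\mathcal{U}^t[t,T]$ with $J(t,x_\lambda;u(\cdot))\le V(t,x_\lambda)+\varepsilon$.

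Next I would test this single control $u(\cdot)$ against the problems started at $x_1$ and $x_0$. With the notation of (\ref{x1x0differentcontrols})--(\ref{lambdadefinition}) (taking $u_0=u_1=u$), the definition of the value function gives the two upper bounds $V(t,x_1)\le J(t,x_1;u(\cdot))=Y_1(t)$ and $V(t,x_0)\le J(t,x_0;u(\cdot))=Y_0(t)$, whence
\[
\lambda V(t,x_1)+(1-\lambda)V(t,x_0)\le \lambda Y_1(t)+(1-\lambda)Y_0(t)=Y^\lambda(t).
\]

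The crucial step is then to invoke the preceding lemma, which asserts exactly that $Y^\lambda(t)-Y_\lambda(t)\le C\lambda(1-\lambda)\Vert x_1-x_0\Vert_H^2$ for a constant $C$ universal in $t$ (itself resting on the trajectory estimate of Lemma \ref{Le3-9} and the semiconcavity of $h$ and $k$ in $(B2)$). Since $Y_\lambda(t)=J(t,x_\lambda;u(\cdot))\le V(t,x_\lambda)+\varepsilon$, combining the last two displays yields
\[
\lambda V(t,x_1)+(1-\lambda)V(t,x_0)\le V(t,x_\lambda)+\varepsilon+C\lambda(1-\lambda)\Vert x_1-x_0\Vert_H^2 .
\]
Letting $\varepsilon\downarrow 0$ gives the semiconcavity estimate with $C$ independent of $t$, which is precisely the claimed uniformity.

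There is no genuine analytic obstacle remaining here, because all the heavy lifting --- the quadratic control of $X^\lambda-X_\lambda$ and the comparison-theorem argument that propagates semiconcavity from the terminal and running costs to $Y$ --- has already been carried out in Lemma \ref{Le3-9} and the preceding lemma. The only points requiring a little care are the determinism subtlety in the first step (ensuring the near-optimal control can be taken in $\mathcal{U}^t[t,T]$, so that the cost values are deterministic and the pointwise inequalities above hold surely rather than merely $P$-a.s.), and checking that the constant $C$ furnished by the two lemmas does not depend on $t$ --- which it does not, since those estimates are stated with constants universal over $t\in[0,T]$.
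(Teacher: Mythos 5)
Your argument is correct and coincides with the paper's own proof: both choose an $\varepsilon$-optimal control for the intermediate point $x_\lambda$, apply that same control at $x_0$ and $x_1$, bound $\lambda V(t,x_1)+(1-\lambda)V(t,x_0)-V(t,x_\lambda)-\varepsilon$ by $Y^{\lambda}(t)-Y_{\lambda}(t)$, and invoke the preceding lemma before letting $\varepsilon\downarrow 0$. No substantive difference.
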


\begin{proof}
Fix any $x_{1},x_{0}\in H$ and $\lambda\in\lbrack0,1].\ $For any $\varepsilon
>0,$ we can find $u_{\varepsilon}\in\mathcal{U}^{t}$ such that
\[
J(t,x_{\lambda};u_{\varepsilon}(\cdot))<V(t,x_{\lambda})+\varepsilon.
\]
Applying Lemma \ref{Le3-7} with $u=u_\varepsilon$ and using the notations therein, we obtain
\begin{align*}
&  \lambda V(t,x_{1})+(1-\lambda)V(t,x_{0})-V(t,x_{\lambda})-\varepsilon\\
&  \leq\lambda J(t,x_{1};u_{\varepsilon}(\cdot))+(1-\lambda)J(t,x_{0}%
;u_{\varepsilon}(\cdot))-J(t,x_{\lambda};u_{\varepsilon}(\cdot))\\
&  =\lambda Y_{1}(t)+(1-\lambda)Y_{0}(t)-Y_{\lambda}(t)\\
&  =Y^{\lambda}(t)-Y_{\lambda}(t)\\
&  \leq C\lambda(1-\lambda)\Vert x_{1}-x_{0}\Vert_{H}^{2}.
\end{align*}
Letting $\varepsilon\rightarrow0$, we get the desired result.
\end{proof}

Next, we study the convexity of the value function under the following assumption.
\begin{description}
\item[$(B3)$] $a:[0,T]\times H\times H_{1}\rightarrow H$ and $b:[0,T]\times H\times H_{1}%
\rightarrow H$ are linear  in $(x,u)$.
For each  $t\in\lbrack0,T]$, $k(t,\cdot,\cdot,\cdot,\cdot): H\times\mathbb{R}\times \mathbb{R} \times U\rightarrow\mathbb{R}$
and $h(\cdot):H\rightarrow\mathbb{R}$ are convex.
\end{description}
\begin{theorem}
Suppose $(H4)$, $(H6)$, $(B1)$, $(B2)$ and $(B3)$. Then, for every
$t\in\lbrack0,T]$, the function $V(t,\cdot)$ is convex.
\end{theorem}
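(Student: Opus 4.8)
The plan is to mirror the proof of the preceding semiconcavity theorem, but now to exploit the linearity in $(B3)$, which forces the superposed state to coincide \emph{exactly} with the state of the averaged control, thereby eliminating the quadratic remainder that appeared in Lemma~\ref{Le3-9}. First I would fix $x_0,x_1\in H$ and $\lambda\in[0,1]$, and for $\varepsilon>0$ choose near-optimal controls $u_0(\cdot),u_1(\cdot)\in\mathcal{U}^{t}[t,T]$ with $J(t,x_i;u_i(\cdot))<V(t,x_i)+\varepsilon$ for $i=0,1$, using that $V(t,\cdot)$ is the infimum of $J(t,\cdot;u(\cdot))$ over $u(\cdot)\in\mathcal{U}^{t}[t,T]$. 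Since $U$ is convex, $u_\lambda:=\lambda u_1+(1-\lambda)u_0\in\mathcal{U}^{t}[t,T]$ is admissible, whence $V(t,x_\lambda)\le J(t,x_\lambda;u_\lambda(\cdot))=Y_\lambda(t)$.

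The key structural observation is that, because $a$ and $b$ are linear and continuous in $(x,u)$ by $(B3)$ while $A,B$ are linear, the solution map $(x,u)\mapsto X^{t,x;u}$ of SEE (\ref{SEE1-1}) is linear. Hence $X_\lambda(s)=X^{t,x_\lambda;u_\lambda}(s)=\lambda X_1(s)+(1-\lambda)X_0(s)=X^\lambda(s)$ for all $s\in[t,T]$, $P$-a.s., \emph{without} an error term. Setting $(Y^\lambda,Z^\lambda):=(\lambda Y_1+(1-\lambda)Y_0,\ \lambda Z_1+(1-\lambda)Z_0)$, linearity of the It\^o integral shows that $Y^\lambda$ solves the BSDE with terminal value $\lambda h(X_1(T))+(1-\lambda)h(X_0(T))$ and driver $g(r):=\lambda k(r,X_1,Y_1,Z_1,u_1)+(1-\lambda)k(r,X_0,Y_0,Z_0,u_0)$.

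Next I would invoke the convexity hypotheses. Convexity of $h$ gives $h(X_\lambda(T))=h(X^\lambda(T))\le\lambda h(X_1(T))+(1-\lambda)h(X_0(T))$, so the terminal value of $Y^\lambda$ dominates that of $Y_\lambda$. Joint convexity of $k$ in $(x,y,z,u)$, combined with the exact identity $X_\lambda=X^\lambda$, yields
\[
g(r)\ \ge\ k\big(r,X_\lambda(r),Y^\lambda(r),Z^\lambda(r),u_\lambda(r)\big),\qquad P\text{-a.s., a.e. }r\in[t,T].
\]
Thus $Y^\lambda$ is a supersolution of the BSDE generated by $(y,z)\mapsto k(r,X_\lambda(r),y,z,u_\lambda(r))$: its driver equals this Lipschitz (by $(H6)$) generator plus the nonnegative process $g(r)-k(r,X_\lambda,Y^\lambda,Z^\lambda,u_\lambda)$, and its terminal value is the larger one. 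The comparison theorem for classical real-valued BSDEs then gives $Y_\lambda(s)\le Y^\lambda(s)$ on $[t,T]$; evaluating at $s=t$, where all quantities are deterministic because $u_i\in\mathcal{U}^{t}$, this reads $Y_\lambda(t)\le\lambda Y_1(t)+(1-\lambda)Y_0(t)$.

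Combining the estimates yields $V(t,x_\lambda)\le Y_\lambda(t)\le\lambda Y_1(t)+(1-\lambda)Y_0(t)<\lambda V(t,x_1)+(1-\lambda)V(t,x_0)+\varepsilon$, and letting $\varepsilon\downarrow0$ gives the convexity of $V(t,\cdot)$, with a constant independent of $t$. I expect the only delicate points to be the rigorous verification that linearity propagates to the solution map $X^{t,x;u}$ in the variational framework (so that $X_\lambda=X^\lambda$ holds \emph{exactly} rather than up to a quadratic error), and the careful bookkeeping ensuring $g(r)-k(r,X_\lambda,Y^\lambda,Z^\lambda,u_\lambda)\ge0$ so that the comparison theorem applies with the intended generator; the remaining steps are routine.
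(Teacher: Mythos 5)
Your proposal is correct and follows essentially the same route as the paper: pick $\varepsilon$-optimal controls $u_{\varepsilon}^{0},u_{\varepsilon}^{1}\in\mathcal{U}^{t}$ for the two endpoints, form the convex combination of controls, use the linearity in $(B3)$ to get the exact identity $\tilde X_{\lambda}=\lambda\tilde X_{1}+(1-\lambda)\tilde X_{0}$, and then apply joint convexity of $h$ and $k$ together with the BSDE comparison theorem to conclude $\tilde Y^{\lambda}(t)\ge\tilde Y_{\lambda}(t)$. The only cosmetic difference is that the paper writes down the BSDE satisfied by the difference $\tilde Y^{\lambda}-\tilde Y_{\lambda}$ rather than phrasing $\tilde Y^{\lambda}$ as a supersolution, which is the same comparison argument.
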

\begin{proof}
 Given any $x_{1},x_{0}\in
H,$ $\lambda\in\lbrack0,1],\ $and for any $\varepsilon>0,$ we can find
$u_{\varepsilon}^{1},u_{\varepsilon}^{0}\in\mathcal{U}^{t}$ such that
\[
J(t,x_{1};u_{\varepsilon}^{1}(\cdot))<V(t,x_{1})+\varepsilon,
\]%
\[
J(t,x_{0};u_{\varepsilon}^{0}(\cdot))<V(t,x_{0})+\varepsilon,
\]
and define%
\[
u_{\varepsilon}^{\lambda}(\cdot)=\lambda u_{\varepsilon}^{1}(\cdot
)+(1-\lambda)u_{\varepsilon}^{0}(\cdot).
\]
We denote the corresponding solution of the systems with controls
$u_{\varepsilon}^{1}$, $u_{\varepsilon}^{0}$ and $u_{\varepsilon}^{\lambda}$
by $(\tilde{X}_{1}(\cdot),\tilde{Y}_{1}(\cdot),\tilde{Z}_{1}(\cdot))$,
$(\tilde{X}_{0}(\cdot),\tilde{Y}_{0}(\cdot),\tilde{Z}_{0}(\cdot))$ and
$(\tilde{X}_{\lambda}(\cdot),\tilde{Y}_{\lambda}(\cdot),\tilde{Z}_{\lambda
}(\cdot)),$ respectively$.$ We define  $$\tilde{X}^{\lambda}:=\lambda\tilde{X}_{1}+(1-\lambda)\tilde{X}%
_{0},\ \tilde{Y}^{\lambda}:=\lambda\tilde{Y}_{1}+(1-\lambda)\tilde{Y}_{0}\ \text{and}\
 \tilde{Z}^{\lambda}:=\lambda\tilde{Z}_{1}+(1-\lambda)\tilde{Z}_{0}.$$  Then%
\begin{align*}
&  \lambda V(t,x_{1})+(1-\lambda)V(t,x_{0})-V(t,x_{\lambda})+\varepsilon\\
&  \geq\lambda J(t,x_{1};u_{\varepsilon}^{1}(\cdot))+(1-\lambda)J(t,x_{0}%
;u_{\varepsilon}^{0}(\cdot))-J(t,x_{\lambda};u_{\varepsilon}^{\lambda}%
(\cdot))\\
&  =\lambda\tilde{Y}_{1}(t)+(1-\lambda)\tilde{Y}_{0}(t)-\tilde{Y}_{\lambda
}(t).
\end{align*}

Note that   $\tilde{X}_{\lambda} =\tilde{X}^{\lambda} $ due to the linearity of the
coefficients. From the convexity assumptions on $h$ and $k$, we have $$\lambda h(\tilde{X}_{1}(T))+(1-\lambda)h(\tilde{X}_{0}(T))\geq
h(\tilde{X}^{\lambda}(T))= h(\tilde{X}_{\lambda}(T))$$ and
\begin{align*}
&  \lambda k(r,\tilde{X}_{1}(r),\tilde{Y}_{1}(r),\tilde{Z}_{1}(r),u_{\varepsilon
}^{1}(r))+(1-\lambda)k(r,\tilde{X}_{0}(r),\tilde{Y}_{0}(r),\tilde{Z}%
_{0}(r),u_{\varepsilon}^{0}(r))\\
&  \geq k(r,\tilde{X}^{\lambda}(r),\tilde{Y}^{\lambda}(r),\tilde{Z}^{\lambda
}(r),u_{\varepsilon}^{\lambda}(r))\\
&  =k(r,\tilde{X}_{\lambda}(r),\tilde{Y}^{\lambda}(r),\tilde{Z}^{\lambda
}(r),u_{\varepsilon}^{\lambda}(r)),
\end{align*}
Then according to the comparison theorem for  BSDEs, we derive
\[
\tilde{Y}^{\lambda}(s)\geq\tilde{Y}_{\lambda}(s).
\]
Thus,%
\[
\lambda V(t,x_{1})+(1-\lambda)V(t,x_{0})-V(t,x_{\lambda})+\varepsilon\geq0.
\]
Letting $\varepsilon\rightarrow0$, we get the desired result.
\end{proof}
\begin{remark}\label{Rm3-16}
From the previous results,  for all $t\in\lbrack0,T]$, we know that $V(t,\cdot):H\rightarrow\mathbb{R}$ is
semiconcave and convex, then  $V(t,\cdot)\in
C^{1,1}(H)$ according to \cite{LL86}. 

As a direct corollary of Theorem \ref{diff-space}, we have 
	\[
	D_{x}^{1,-}V(t,\bar{X}(t))\subset	\{p(t)\}\subset D_{x}^{1,+}V(t,\bar{X}(t)),\quad
	t\in\lbrack0,T],\quad P\text{-a.s.},
	\]
	where, for $v\in C([0,T]\times H)$ and $(t,x)\in\lbrack0,T)\times H$,
	\begin{align*}
		D_{x}^{1,+}v(t,x)  & : =\Big\{p\in H\Big| v(t,y)\leq v(t,x)+\langle p,y-x\rangle+o(\Vert y-x\Vert_{H}),\mbox{ as }y\rightarrow
		x\Big\},\\
		D_{x}^{1,-}v(t,x)  & : =\Big\{p\in H\Big|v(t,y)\geq v(t,x)+\langle p,y-x\rangle+o(\Vert y-x\Vert_{H}),\mbox{ as }y\rightarrow
		x\Big\}.
	\end{align*}
When $V(t,\cdot)\in C^{1,1}(H)$, it is straightforward to check that
the Fr\'{e}chet derivative $V_{x}(t,\bar{X}(t))\in D_{x}^{1,-}V(t,\bar{X}(t))\cap
D_{x}^{1,+}V(t,\bar{X}(t)).$ Then according to 
\cite[Proposition 6.5.1]{LY95}, we know that
\[
D_{x}^{1,-}V(t,\bar{X}(t))=D_{x}^{1,+}V(t,\bar{X}(t))=\{V_{x}(t,\bar{X}(t))\},\quad
t\in\lbrack0,T],\ P\text{-a.s.}%
\]
So,%
\[
D_{x}^{1,-}V(t,\bar{X}(t))=D_{x}^{1,+}V(t,\bar{X}(t))=\{V_{x}(t,\bar{X}(t)
)\}=\{p(t)\},\quad t\in\lbrack0,T],\ P\text{-a.s.}%
\]
\end{remark}
\begin{remark}\label{rm4-1}
The value function in DPP corresponds to the viscosity solutions of
second-order Hamilton-Jacobi-Bellman equations in Hilbert space for 
recursive control systems (see, e.g., \cite{TZ24-1}).	For the $C^{1,1}$-regularity of the value
	functions of non-recursive stochastic control problems in Hilbert space, when
	the operators in the equations are bounded, it was obtained in
	\cite{BGY24,MS23}; when the equation contains unbounded operators, some partial
	results are available in \cite{FFS24,Go95} and the  $C^{1,1}$-regularity  for conventional optimal control problems was obtained in \cite{FSW23}. The interested readers are referred  to
	\cite{CS04,FGG10,Lions82,Kr80} for the results in a finite dimensional space.
\end{remark}
\section{Smooth case and examples}
\subsection{The smooth case}
In this section, we examine the relationship between the MP and the DPP under the assumption that the value function   $V(t,x)$ is sufficiently
smooth.  To apply the results on infinite-dimensional Hamilton-Jacobi-Bellman (HJB)  eqations, we assume throughout this subsection that $B\equiv 0$.
We begin by the following result.

Consider the following HJB  equation:
\begin{equation}
	\left\{
	\begin{aligned}
		V_{t}(t,x)
		& + \langle A^{\ast}V_{x}(t,x),x\rangle \\
		& \quad + \inf_{v\in U} G\big(t,x,V(t,x),V_{x}(t,x),V_{xx}(t,x),v\big)
		= 0,\quad (t,x)\in[0,T]\times H, \\
		V(T,x)
		& = h(x),\quad x\in H.
	\end{aligned}
	\right.
	\label{Myeq4-8}
\end{equation}
where%
\begin{align*}
G(t,x,r,p,P,v)  &  :=\frac{1}{2}\langle Pb(t,x,v),b(t,x,v)\rangle
+\langle p,a(t,x,v)\rangle+k(t,x,r,\langle
p,b(t,x,v)\rangle,v),\\
 &\quad\quad\quad (t,x,r,p,P,v)  \in\lbrack0,T]\times H\times\mathbb{R\times}H\times
S(H)\times U.
\end{align*}

We denote  the weak topology on $\mathcal{V}$ by $\tau_w$ and define
\begin{align*}
{\Phi}:=  &  \Big\{\varphi\in C^{1,2}([0,T]\times H):\text{if }x\in
\mathcal{V},\ \text{then}\ V_{x}(t,x)\in\mathcal{V};\ x\rightarrow
V_{x}(t,x)\text{ is continuous from}\\
&  (\mathcal{V}, \Vert \cdot\Vert_{\mathcal{V}})\ \text{to}\ (\mathcal{V},\tau_w);\ \Vert V_{x}(t,x)\Vert_{\mathcal{V}}\leq C(1+\Vert
x\Vert_{\mathcal{V}}) \ \text{and}\ A^{\ast}(t)V_{x}(t,x)\in C([0,T]\times H;H)\Big\}.
\end{align*}

The proof of the following result is provided in the Appendix.
\begin{proposition}
\label{Prop5-6} Assume $(H4)$ and $(H6)$. Suppose that the value function
$V\in{\Phi},$ then it is a classical solution of the HJB equation
(\ref{Myeq4-8}).
\end{proposition}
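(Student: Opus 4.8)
The plan is to read off the HJB equation (\ref{Myeq4-8}) from the dynamic programming principle (Theorem \ref{DPP}) by inserting an It\^{o} formula for the smooth value function along the controlled trajectory and then differentiating at $\delta=0$. The terminal condition is immediate, since at $t=T$ one has $Y^{T,x;u}(T)=h(X^{T,x;u}(T))=h(x)$ and hence $V(T,x)=h(x)$. For the interior equation I would fix an arbitrary $(t,x)$ with $x\in\mathcal{V}$ (the general case $x\in H$ following by density and the continuity built into $\Phi$) and establish the two one-sided inequalities separately.

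\emph{The ``$\geq$'' direction.} Fix $v\in U$, run the system on $[t,t+\delta]$ under the constant control $v$, and write $X(\cdot)=X^{t,x;v}(\cdot)$. By Theorem \ref{DPP}, $V(t,x)\leq G_{t,t+\delta}^{t,x;v}[V(t+\delta,X(t+\delta))]=:Y(t)$, where $(Y,Z)$ solves the BSDE (\ref{EQ6}) with terminal datum $V(t+\delta,X(t+\delta))$. Because $V\in\Phi\subset C^{1,2}$, I would apply the It\^{o} formula in the variational (Gelfand-triple) framework to $s\mapsto V(s,X(s))$: its drift is $\partial_sV+\langle A^{\ast}V_x,X\rangle+\langle V_x,a\rangle+\frac12\langle V_{xx}(BX+b),BX+b\rangle_{\mathcal{L}_2^0}$ and its diffusion coefficient is $\langle V_x,BX+b\rangle_{\mathcal{L}_2^0}$. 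Comparing the BSDE solved by $V(\cdot,X(\cdot))$ with the one solved by $Y$, the difference satisfies
\[
Y(t)-V(t,x)=\mathbb{E}\int_t^{t+\delta}\Big[k(s,X(s),Y(s),Z(s),v)+\partial_sV+\langle A^{\ast}V_x,X\rangle+\langle V_x,a\rangle+\tfrac12\langle V_{xx}(BX+b),BX+b\rangle_{\mathcal{L}_2^0}\Big]\,ds.
\]
Dividing by $\delta$ and letting $\delta\downarrow0$, the a priori estimates of Lemma \ref{apriori-see} give $X(s)\to x$ and, via the comparison of the two BSDEs, $Y(s)\to V(t,x)$ and $Z(s)\to\langle V_x(t,x),B(t)x+b(t,x,v)\rangle_{\mathcal{L}_2^0}$; by continuity of the data the right-hand integrand is asymptotically constant, so $0\leq\partial_tV+\langle A^{\ast}V_x,x\rangle+G(t,x,v,V,V_x,V_{xx})$. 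Taking the infimum over $v\in U$ yields the ``$\geq$'' half of (\ref{Myeq4-8}).

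\emph{The ``$\leq$'' direction.} Here I would use the DPP in its infimum form: for each $\varepsilon>0$ pick a nearly optimal control $u^{\delta}\in\mathcal{U}^{t}[t,t+\delta]$ with $G_{t,t+\delta}^{t,x;u^{\delta}}[V(t+\delta,X^{t,x;u^{\delta}}(t+\delta))]\leq V(t,x)+\varepsilon\delta$. Running the identical It\^{o}/BSDE expansion with $u^{\delta}$ in place of $v$, and using the pointwise bound $G(s,X(s),u^{\delta}(s),\cdot)\geq\inf_{v\in U}G(s,X(s),v,\cdot)$ together with the continuity of $s\mapsto\inf_vG(s,\cdot)$ at $s=t$, one obtains $0\geq\partial_tV+\langle A^{\ast}V_x,x\rangle+\inf_{v\in U}G(t,x,v,V,V_x,V_{xx})$ after letting first $\delta\downarrow0$ and then $\varepsilon\downarrow0$. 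Combining the two inequalities gives (\ref{Myeq4-8}) pointwise, and since $V\in\Phi$ makes every term continuous, $V$ is a classical solution.

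\emph{Main obstacle.} The delicate step is the It\^{o} formula for $V(s,X(s))$ in the unbounded-operator setting: because $A(s)X(s)\in\mathcal{V}^{\ast}$ and $B(s)X(s)\in\mathcal{L}_2^0$ are genuinely unbounded, the drift term $\langle A^{\ast}V_x,X\rangle$ and the $B$-dependent second-order term are meaningful only through the duality pairing, and it is precisely the defining properties of $\Phi$ — $V_x(t,x)\in\mathcal{V}$, the weak continuity and $\mathcal{V}$-linear growth of $V_x$, and $A^{\ast}V_x\in C([0,T]\times H;H)$ — together with the coercivity and quasi-skew-symmetry of $(H4)$ that make this pairing well-defined and let the unbounded $A$- and $B$-contributions combine into the generator of (\ref{Myeq4-8}). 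A secondary, recursive-specific difficulty is the identification $Z(s)\to\langle V_x,Bx+b\rangle_{\mathcal{L}_2^0}$ and the uniform control of the $k$-term; these are handled by the a priori estimates and comparison theorem for classical BSDEs under the continuity and boundedness hypotheses in $(H5)$.
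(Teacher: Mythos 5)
Your proposal is correct in substance, but it takes the classical two-sided verification route rather than the paper's argument. You derive the terminal condition, then obtain ``$\geq$'' from the DPP with constant controls $v$ and ``$\leq$'' with $\varepsilon\delta$-optimal controls, each time expanding $V(s,X(s))$ by the variational It\^{o} formula and passing to the limit $\delta\downarrow 0$. The paper instead works in one shot: it sets $\hat Y:=Y-V(\cdot,X)$, notes that Theorem \ref{DPP} is equivalent to $\inf_{u}\hat Y(t)=0$, replaces $\hat Y$ by a BSDE $\hat Y^{1}$ whose coefficients are frozen at $(t,x)$ (with error $o(\delta)$ uniformly in $u$, via Remark \ref{Rm3-6}), and then invokes Lemma \ref{inf-lem} --- a comparison-theorem-plus-measurable-selection argument --- to identify $\inf_{u}\hat Y^{1}(t)$ with the solution $\hat Y^{0}(t)$ of the deterministic backward ODE driven by $\inf_{v}F$; dividing $\hat Y^{0}(t)=o(\delta)$ by $\delta$ gives the HJB equality directly. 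The trade-offs: the paper's route requires a measurable selection (and compactness of $U$) to realize the infimum by an admissible control, but it cleanly packages the interchange of infimum and BSDE solution; your route avoids measurable selection but must verify, uniformly over the near-optimal family $u^{\delta}$, that the $k(s,X,Y^{\delta},Z^{\delta},u^{\delta})$ term can be replaced by $k$ evaluated at $(V,\langle V_x,b\rangle)$ up to $o(\delta)$ --- this does follow from the standard BSDE a priori estimate (the frozen driver is bounded, so $\mathbb{E}\int_t^{t+\delta}(|\hat Y|+|\hat Z|)\,ds=O(\delta^{3/2})$), but it is the step you should make explicit. One point to reconcile: your It\^{o} expansion carries the full diffusion $B(s)X(s)+b(s,X(s),v)$ into the second-order and martingale terms, whereas the generator $G$ in (\ref{Myeq4-8}) (and the paper's own computation) retains only $b(t,x,v)$; you should either state the limiting equation with $B(t)x+b(t,x,v)$ throughout or indicate the convention under which the $B$-contribution is omitted, so that the limit you obtain is literally the displayed HJB equation.
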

\begin{remark}\label{vis-rm}
	Recently, \cite{TZ24-1} introduced a new notion of viscosity solutions for
	infinite-dimensional HJB equation with unbounded
	operators. It eliminated the need for the so-called $B$-continuity assumption on the coefficients (see \cite{FGS17}) by utilizing an It\^{o}-type inequality for solutions of SEEs. If we assume the operator $L_{A,0}(t,s)$ (see (\ref{Myeq1-5})) is contractive in the sense that $\Vert L_{A,0}(t,s)x\Vert_{H}\leq \Vert u\Vert_{H}$, for $x\in H$ and
	 $t\leq s,$ 
	it can be shown  by the same method that $V$ is the unique viscosity solution of 
	(\ref{Myeq4-8}).
\end{remark}

The following is the relationship between MP and DPP in the smooth case.

\begin{theorem}
\label{Rel-smooth} Assume $(H4)$, $(H6)$ and fix $x\in H.$ Suppose $(\bar
{X}(\cdot),\bar{Y}(\cdot),\bar{Z}(\cdot),\bar{u}(\cdot))$ are the optimal
4-tuple of Problem $(S_{x})$ and $(p(\cdot),q(\cdot)),P(\cdot)$ are the
solutions of corresponding adjoint equations$.$ Suppose that the value
function $V\in{\Phi}$. Then
\begin{equation}%
\begin{split}
&  -V_{t}(t,\bar{X}(t))\\
&  =\langle A(t)V_{x}(t,\bar{X}(t)),\bar{X}(t)\rangle_{\ast}+G(t,\bar
{X}(t),V(t,\bar{X}(t)),V_{x}(t,\bar{X}(t)),V_{xx}(t,\bar{X}(t)),\bar{u}(t))\\
&  =\langle A(t)V_{x}(t,\bar{X}(t)),\bar{X}(t)\rangle_{\ast}+\inf_{v\in
U}G(t,\bar{X}(t),V(t,\bar{X}(t)),V_{x}(t,\bar{X}(t)),V_{xx}(t,\bar
{X}(t)),v),\quad P\text{-a.s. a.e.}%
\end{split}
\label{EQ5-1}%
\end{equation}
If moreover $V\in C^{1,3}([0,T]\times H)$ with $V_{x}\in{\Phi}$, then
\begin{align*}
p(t)  &  =V_{x}(t,\bar{X}(t)),\quad P\text{-a.s. a.e.,}\\
q(t)  &  =V_{xx}(t,\bar{X}(t))b(t,\bar{X}(t),\bar{u}(t)),\quad P\text{-a.s.
a.e.}%
\end{align*}

\end{theorem}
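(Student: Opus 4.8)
The plan is to read off both assertions from the HJB equation of Proposition~\ref{Prop5-6} together with the dynamic programming identity of Lemma~\ref{Le4-4}. Since $V\in\Phi$ is a classical solution of (\ref{Myeq4-8}), evaluating that equation at $x=\bar{X}(t)$ gives immediately the ``$\inf_{v}$'' form, i.e. the last equality in (\ref{EQ5-1}). For the equality at $v=\bar{u}(t)$, I would apply the variational (Gelfand-triple) It\^{o} formula to $s\mapsto V(s,\bar{X}(s))$ along the optimal trajectory. By Lemma~\ref{Le4-4} one has $V(s,\bar{X}(s))=\bar{Y}(s)$ on $[0,T]$, so this continuous semimartingale must coincide with the recursive utility $\bar{Y}$, which solves (\ref{utibsde}) with generator $k(\cdot,\bar{X},\bar{Y},\bar{Z},\bar{u})$. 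Matching the martingale parts identifies the diffusion, in particular $\bar{Z}(s)=\langle V_{x}(s,\bar{X}(s)),B(s)\bar{X}(s)+b(s,\bar{X}(s),\bar{u}(s))\rangle_{\mathcal{L}_{2}^{0}}$; matching the finite-variation parts, once the contributions of the unbounded term $B\bar{X}$ and of the $z$-slot of $k$ are organised through the pairing $\langle A^{\ast}V_{x},\bar{X}\rangle$ and the function $G$ exactly as in the derivation of Proposition~\ref{Prop5-6}, yields $-V_{t}-\langle A^{\ast}V_{x},\bar{X}\rangle=G(t,\bar{X},\bar{u},V,V_{x},V_{xx})$. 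Comparing with the ``$\inf_{v}$'' identity shows that the pointwise infimum is attained at $\bar{u}(t)$, which is precisely (\ref{EQ5-1}).

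To identify the adjoint states under the stronger hypothesis $V\in C^{1,3}$ with $V_{x}\in\Phi$, I would differentiate the HJB equation (\ref{Myeq4-8}) once more in $x$. Since the first part shows that the infimum over $v$ is realised at $\bar{u}(t)$, the envelope theorem lets me differentiate $\inf_{v}G$ through the minimiser without meeting $\partial_{x}\bar{u}$, so that along $\bar{X}$ the differentiated equation involves only $G_{x}(\cdot,\bar{u})$. Feeding the optimal trajectory into this differentiated equation and applying the variational It\^{o} formula to $s\mapsto V_{x}(s,\bar{X}(s))$ produces a backward stochastic evolution equation for $V_{x}(\cdot,\bar{X}(\cdot))$. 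I would then verify term by term that its coefficients are exactly those of the first-order adjoint equation (\ref{adjoint1}): the operators $\bar{A}^{\ast}=A^{\ast}+\bar{a}_{x}^{\ast}$ and $\bar{B}^{\ast}=B^{\ast}+\bar{b}_{x}^{\ast}$ arise from differentiating the drift and the quadratic diffusion terms, while the factors $k_{y}$ and $k_{z}$ come from differentiating $k$ through its $y$- and $z$-slots. By uniqueness of the solution of the linear equation (\ref{adjoint1}) one then obtains $p(t)=V_{x}(t,\bar{X}(t))$ and $q(t)=V_{xx}(t,\bar{X}(t))b(t,\bar{X}(t),\bar{u}(t))$, a.e., $P$-a.s.

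The pervasive difficulty is the unbounded, time-varying pair $(A,B)$, which forces every It\^{o} formula to be taken in its variational form, so that $\langle A^{\ast}V_{x},\bar{X}\rangle$ and the analogous $\bar{B}^{\ast}$-terms are genuine dualisations between $\mathcal{V}^{\ast}$ and $\mathcal{V}$; the memberships $V\in\Phi$ and $V_{x}\in\Phi$ are exactly what guarantee $V_{x}\in\mathcal{V}$, the growth bound, and the strong continuity needed to apply the formula. The most delicate step is the bookkeeping of the diffusion coefficient in the second part: the naive expression $V_{xx}(B\bar{X}+b)$ must be split, with the unbounded piece $V_{xx}B\bar{X}$ reabsorbed into the drift through the $\bar{B}^{\ast}$-terms (and, in the recursive case, through the $k_{z}$-shift implicit in the operator $\tilde{L}$), leaving precisely $q=V_{xx}b$ as the genuine $\mathcal{L}_{2}^{0}$-valued martingale integrand. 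I expect this reabsorption, together with the justification of the differentiation-through-the-infimum step under only $C^{1,1}$-type regularity of the value function, to be the main obstacle; the latter I would control by fixing the optimal control $\bar{u}(\cdot)$ and invoking the optimality characterisation of Theorem~\ref{SMP}.
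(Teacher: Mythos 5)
Your proposal is correct and follows essentially the same route as the paper: It\^{o}'s formula applied to $V(s,\bar{X}(s))=\bar{Y}(s)$ (via Lemma \ref{Le4-4}) combined with the HJB equation of Proposition \ref{Prop5-6} for the first part, and a first-order condition in $x$ along the optimal trajectory followed by It\^{o}'s formula for $V_{x}(s,\bar{X}(s))$ and uniqueness of the adjoint equation (\ref{adjoint1}) for the second. The only notable difference is that where you invoke an envelope theorem to differentiate $\inf_{v\in U}G$, the paper instead observes that $x\mapsto V_{t}(t,x)+\langle A^{\ast}(t)V_{x}(t,x),x\rangle+G(t,x,V,V_{x},V_{xx},\bar{u}(t))$ is nonnegative by the HJB equation and vanishes at $x=\bar{X}(t)$ by the first part, hence is minimized there; this yields the same vanishing gradient without any differentiability issue for the infimum, which is exactly the "fix the optimal control" device you anticipated.
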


\begin{proof}
From Lemma \ref{Le4-4}, we know that
\[
V(t,\bar{X}(t))=\bar{Y}(t).
\]
Applying It\^{o}'s formula (\cite[Lemma 2.15]{P2021}), we obtain
\begin{align*}
dV(t,\bar{X}(t))  &  =\Big[V_{t}(t,\bar{X}(t))+\langle V_{x}(t,\bar
{X}(t)),a(t,\bar{X}(t),\bar{u}(t))\rangle+\langle A^{\ast}(t)V_{x}(t,\bar
{X}(t)),\bar{X}(t)\rangle\\
&  +\frac{1}{2}\b\langle V_{xx}(t,\bar{X}(t))b(t,\bar{X}(t),\bar{u}%
(t)),b(t,\bar{X}(t),\bar{u}(t))\b\rangle\Big]dt+\langle V_{x}(t,\bar
{X}(t)),b(t,\bar{X}(t),\bar{u}(t)) \rangle dw(t).
\end{align*}
Thus,
\begin{align*}
&  V_{t}(t,\bar{X}(t))+\langle V_{x}(t,\bar{X}(t)),a(t,\bar{X}(t),\bar
{u}(t))\rangle+\langle A^{\ast}(t)V_{x}(t,\bar{X}(t)),\bar{X}(t)\rangle\\
&  +\frac{1}{2}\langle V_{xx}(t,\bar{X}(t))b(t,\bar{X}(t),\bar{u}%
(t)),b(t,\bar{X}(t),\bar{u}(t))\rangle=-k(t,\bar{X}(t),\bar{Y}(t),\bar
{Z}(t),\bar{u}(t))
\end{align*}
and
\[
\langle V_{x}(t,\bar{X}(t)),b(t,\bar{X}(t),\bar{u}(t))\rangle=\bar{Z}(t).
\]
Therefore,
\begin{equation}%
\begin{split}
&  V_{t}(t,\bar{X}(t))+\langle A^{\ast}V_{x}(t,\bar{X}(t)),\bar{X}(t)\rangle+G(t,\bar{X}(t),V(t,\bar{X}(t)),V_{x}(t,\bar{X}(t)),V_{xx}%
(t,\bar{X}(t)),\bar{u}(t))=0.
\end{split}
\label{eq5-8}%
\end{equation}
This proves the first equality in (\ref{EQ5-1}). The second equality follows
since $V$ is the classical solution of HJB equation.

We then consider the second part. Taking into account (\ref{eq5-8}) and the
fact that $V$ is a solution to HJB equation, we have
\[
V_{t}(t,x)+\langle A^{\ast}(t)V_{x}(t,x),x\rangle+G(t,x,V(t,x),V_{x}%
(t,x),V_{xx}(t,x),\bar{u}(t))\geq0.
\]
From this we know that
\[
V_{t}(t,x)+\langle A^{\ast}V_{x}(t,x),x\rangle+G(t,x,V(t,x),V_{x}%
(t,x),V_{xx}(t,x),\bar{u}(t))
\]
attains its minimum at $\bar{X}(t)$. Thus,
\begin{align*}
0  &  =\frac{\partial}{\partial x}\Big\{V_{t}(t,x)+\langle A^{\ast}%
(t)V_{x}(t,x),x\rangle+G(t,x,V\left(  t,x\right)  ,\partial
_{x}V\left(  t,x\right)  ,\partial_{xx}V\left(  t,x\right)  ,\bar
{u}(t))\Big\}\Big|_{x=\bar{X}(t)}\\
&  =V_{tx}(t,\bar{X}(t))+A^{\ast}(t)V_{xx}(t,\bar{X}(t))\bar{X}(t)+A^{\ast
}(t)V_{x}(t,\bar{X}(t))\\
&  \ \ \ +V_{xx}(t,\bar{X}(t))a(t,\bar{X}(t),\bar{u}(t))+V_{x}(t,\bar{X}%
(t))a_{x}(t,\bar{X}(t),\bar{u}(t))\\
&  \ \ \ +\frac{1}{2}V_{xxx}(t,\bar{X}(t))(b(t,\bar{X}(t),\bar{u}(t)),b(t,\bar
{X}(t),\bar{u}(t)))+b_{x}^{\ast}(t,\bar{X}(t),\bar{u}(t))V_{xx}(t,\bar
{X}(t))b(t,\bar{X}(t),\bar{u}(t))\\
&\ \ \  +k_{x}(t)+k_{y}(t)V_{x}(t,\bar{X}(t)) +k_{z}(t)\B(b_{x}(t,\bar{X}(t),\bar{u}(t))V_{x}(t,\bar{X}(t))+b(t,\bar
{X}(t),\bar{u}(t))V_{xx}(t,\bar{X}(t))\B).
\end{align*}
Then, applying It\^{o}'s formula to $V_{x}(t,\bar{X}(t))$ and
combining the above equality$,$ we get
\begin{align*}
dV_{x}(t,\bar{X}(t))  &  =V_{xt}(t,\bar{X}(t))dt+V_{xx}(t,\bar{X}%
(t))\b[\b(A(t)\bar{X}(t)+a(t,\bar{X}(t),\bar{u}(t))\b)dt\\
&  \ \ \ +b(t,\bar{X}(t),\bar{u}(t))dw(t)\b]+\frac{1}{2}V_{xxx}(t,\bar{X}%
(t))\b(b(t,\bar{X}(t),\bar{u}(t)),b(t,\bar{X}(t),\bar{u}(t))\b)dt\\
&  =-\Big\{V_{x}(t,\bar{X}(t))\b[A^*(t)+a_{x}(t,\bar{X}(t),\bar{u}(t))+k_{y}%
(t)+k_{z}(t)b_{x}(t,\bar{X}(t),\bar{u}(t))\b]\\
&  \ \ \ +V_{xx}(t,\bar{X}(t))b(t,\bar{X}(t),\bar{u}(t))b_{x}(t,\bar{X}(t),\bar
{u}(t))\\
&  \ \ \ +k_{x}(t)+k_{z}(t)b(t,\bar{X}(t),\bar{u}(t))V_{xx}(t,\bar{X}%
(t))\Big\}dt+V_{xx}(t,\bar{X}(t))b(t,\bar{X}(t),\bar{u}(t))dw(t).
\end{align*}
Moreover, from the boundary condition in the HJB equation, we have
\[
V_{x}(T,\bar{X}(T))=h_{x}(\bar{X}(T)).
\]
So $\tilde{p}(t)=V_{x}(t,\bar{X}(t))$ and $\tilde{q}(t)=V_{xx}(t,\bar
{X}(t))b(t,\bar{X}(t),\bar{u}(t))$ also solve the first-order adjoint
equation (\ref{adjoint1}). From the uniqueness of solutions, we obtain
\[
p(t)=V_{x}(t,\bar{X}(t))\ \text{and}\ q(t)=V_{xx}(t,\bar{X}(t))b(t,\bar{X}%
(t),\bar{u}(t)).
\]
The proof is complete.
\end{proof}

\subsection{Examples}
In this subsection, we present two illustrative examples.
\begin{example}
 Let $G$ be a
bounded domain in $\mathbb{R}^{n}$. Consider the following super-parabolic stochastic PDE
(cf. \cite{Ro18}):
\begin{equation}
	\left\{
	\begin{aligned}
		dX(t,\zeta)= &\,\, \Big[\sum_{i,j=1}^{n}\partial_{\zeta_{i}}
		\big(\alpha_{ij}(t,\zeta)\partial_{\zeta_{j}}X(t,\zeta)\big)
		+ a\big(t,\zeta,u(t),X(t,\zeta)\big)\Big]dt \\
		& + \Big[\sum_{i=1}^{n}\beta_{i}(t,\zeta)\partial_{\zeta_{i}}X(t,\zeta)
		+ b\big(t,\zeta,u(t),X(t,\zeta)\big)\Big]dw(t),\quad (t,\zeta)\in[0,T]\times G, \\
		X(0,\zeta)= &\,\, x_{0}(\zeta),\quad \zeta\in G, \\
		X(t,\zeta)= &\,\, 0,\quad (t,\zeta)\in[0,T]\times\partial G.
	\end{aligned}
	\right.
\end{equation}
Here $\alpha_{ij},\beta_{i},a,b$ and\ $x_{0}$\ are given coefficients and
initial value, respectively.\ The control $u(t)$ is a progressive process
taking values in some metric space $U$. We aim at minimizing
the cost functional
\[
J(x_0;u(\cdot))=Y(0),
\]
where $Y$ is the recursive utility subjected to a BSDE:
\[
Y(t)=\int_{G}h(\zeta,X(T,\zeta))d\zeta+\int_{t}^{T}\int_{G}k(s,\zeta
,Y(s),Z(s),u(s),X(s,\zeta))d\zeta ds-\int_{t}^{T}Z(s)dw({s}).
\]
We take
\[
H=L^{2}(G),\ \mathcal{V}=H_{0}^{1}(G),\ A=\sum_{i,j=1}^{n}%
\partial_{\zeta_{i}}(\alpha_{ij}(t,\zeta)\partial_{\zeta_{j}}),\ %
B=\sum_{i=1}^{n}\beta_{i}(t,\zeta)\partial_{\zeta_{i}}.
\]
Assume there exist some constants $0<\kappa\leq K$ such that
\[
\kappa I_{n\times n}+(\beta_{i}\beta_{j})_{n\times n}\leq2(\alpha
_{ij})_{n\times n}\leq KI_{n\times n}.
\]
With mild measurability, differentiation and growth conditions on the
coefficients, the assumptions 
 $(H4),(H5),(H6)$  in Theorems \ref{diff-space} and \ref{rel-time}  can be verified.
\end{example}

\begin{example}
\label{newexa1}Consider the following control system: for any fixed
$a\in\mathcal{V},$
\begin{equation}\label{SEE1-1}
	\begin{cases}
		dX(t)
		=A(t)X(t)dt+bu(t)dw(t),\ 0\leq t\leq
		T,\quad X(0)=0, \\[4pt]
		Y(t)=\left\langle a,\,X(T)\right\rangle+\int_{t}^{T}[f(Z(s))-\left\langle
		A^{\ast}(s)a,\,X(s)\right\rangle ]ds-\int_{t}^{T}Z(s)dw(s),\ 0\leq t\leq
		T,
	\end{cases}
\end{equation}
where $A:[0,T]\rightarrow\mathfrak{L}(\mathcal{V}
;\mathcal{V}^{\ast})$ satisfying $(H4)$ and   $A^{\ast}(t)a \in C([0,T];H)$, the control domain $U$ is a separable metric space, the function $f:\mathbb{R} \rightarrow\mathbb{R}$ is  twice  differentiable   with continuous and bounded first- and second-order derivatives,  and $b\in
\mathfrak{L}(U,H)$ is a constant.  The corresponding first-order adjoint equation is given by
\begin{equation}
	\left\{
	\begin{aligned}
		-dp(t)= &\,\, \big[A^{\ast}(t)p(t) + f_{z}(\bar{Z}(t))\, q(t) - A^{\ast}(t)a\big]dt 
		- q(t)\,dw(t),\quad t\in[0,T], \\
		p(T)= &\,\, a.
	\end{aligned}
	\right.
\end{equation}
It has the  solution $(p,q)=(a,0).$ The second-order
adjoint equation is%
\[
P(t)=\mathbb{E}\Big[\int_{t}^{T}\tilde{L}^{\ast}(t,s)k_{zz}(\bar
{Z}(t))\b(q(t),q(t)\b)\tilde{L}(t,s)ds\Big|\mathcal{F}_{t}\Big],\quad0\leq t\leq
T,
\]
with
\[
\tilde{L}(t,s):=L_{\tilde{A},\tilde{B}}(t,s),\quad\text{for}\ \ \tilde
{A}(s):=A(s)-\frac{(f_{z}(\bar
	{Z}(s)))^{2}}{8}I_{d}\ \ \text{and}\ \ \tilde
{B}(s):=\frac{f_{z}(\bar
	{Z}(s))}{2}I_{d}.
\]
We can check that $P\equiv0.$ Thus, the maximum principle in Theorem \ref{SMP}  reads
\begin{equation}
f\b(\bar{Z}(t)+\langle a,b(u-\bar{u}(t))\rangle
\b)-f(\bar{Z}(t))\geq0,\quad \forall u\in U,\  P\text{-a.s.,
a.e.} \label{exa1}%
\end{equation}
Observe that, for any control process $u,$
\begin{equation}\label{eq4-22}
	\begin{split}
&  Y(t)-\left\langle a,\,X(t)\right\rangle=\left\langle
a,\,X(T)\right\rangle-\left\langle a,\,X(t)\right\rangle+\int%
_{t}^{T}\b[f(Z(s))-\left\langle A^{\ast}(s)a,\,X(s)\right\rangle 
\b]ds-\int_{t}^{T}Z(s)dw(s)\\
&  =\Big\langle a,\int_{t}^{T}A(s)X(s)ds+\int_{t}^{T}bu(s)dw(s)\Big\rangle
+\int_{t}^{T}\b[f(Z(s))-\left\langle A^{\ast}(s)a,\,X(s)\right\rangle \b]ds-\int_{t}^{T}Z(s)dw(s)\\
&  =\int_{t}^{T}f(Z(s))ds-\int_{t}^{T}\b(Z(s)-\langle a,bu(s)\rangle\b)dw(s)\\
&  =\int_{t}^{T}f\b((Z(s)-\langle a,bu(s)\rangle)+\langle a,bu(s)\rangle\b)ds-\int_{t}^{T}\b(Z(s)-\langle a,bu(s)\rangle\b)dw(s).
\end{split}
\end{equation}
From this we also know that
\begin{equation}\label{eq4-23}
\bar{Y}(t)-\left\langle a,\,\bar{X}(t)\right\rangle =\int_{t}^{T}f\b(\bar
{Z}(s)-\langle a,b\bar{u}(s)\rangle+\langle a,b\bar{u}(s)\rangle\b)ds-\int%
_{t}^{T}\b(\bar{Z}(s)-\langle a,b\bar{u}(s)\rangle\b)dw(s).
\end{equation}
If $\bar{u}$ satisfies (\ref{exa1}), then for any quadruple $(X,Y,Z,u),$ applying the comparison theorem of   BSDEs to (\ref{eq4-22}) and (\ref{eq4-23}), we deduce
\[
Y(t)-\left\langle a,\,X(t)\right\rangle \geq\bar{Y}(t)-\left\langle
a,\,\bar{X}(t)\right\rangle,\quad t\in\lbrack0,T].
\]
In particular, $Y(0)\geq\bar{Y}(0).$ That is,  Inequality (\ref{exa1}) is also sufficient (for $\bar{u}$ to be optimal). 

Now we take $U=\{0
,e_{1}\}$, where $0,e_1\in H_1$ with $e_1\neq 0$, for some Hilbert space $H_1$. Suppose  $f(0)=0$,
$f(\langle a,be_{1}\rangle)\geq0$. We can check that $(\bar{X},\bar{Y}%
,\bar{Z},\bar{u})=(0,0,0,0)$ satisfies (\ref{exa1}), thus $\bar{u}=0$ is an
optimal control. The HJB equation is%
\begin{equation*}
	\left\{
	\begin{aligned}
		\partial_{t}V(t,x)
		& + \langle A^{\ast}V_{x}(t,x),x\rangle \\
		& \quad + \inf_{v\in U} G\big(t,x,v,V(t,x),V_{x}(t,x),V_{xx}(t,x)\big)
		= 0,\quad (t,x)\in[0,T]\times H, \\
		V(T,x)
		& = x,\quad x\in H.
	\end{aligned}
	\right.
\end{equation*}
where%
\begin{align*}
& G(t,x,v,r,p,P)    :=\frac{1}{2}\langle Pbv,bv\rangle_{H%
}+f(\langle p,bv\rangle)-\left\langle A^{\ast}(s)a,\,x\right\rangle ,\\& \ \ \ \ \ \ \     
(t,x,v,r,p,P)    \in\lbrack0,T)\times H\times U\times\mathbb{R\times}H\times
S(H).
\end{align*}
It is direct to verify that  $V(t,x)=\left\langle a, x\right\rangle, \ (t,x)\in  [0,T]\times H$, is  the solution.
Hence,  we verifies the relationship  $p(t)=V_{x}(t,\bar{X}(t))=a$ in  Theorem  \ref{Rel-smooth}.
\end{example}

\section{Appendix}

\subsection{Proof of Theorem \ref{DPP}}

\begin{proposition}
\label{Le3-2} Let Assumptions $(H4)$ and $(H6)$ be satisfied. Then
\[
V(t,x)=\inf_{u(\cdot)\in\mathcal{U}^{t}[t,T]}Y^{t,x;u}(t), \quad (t,x)\in [0,T]\times H.
\]

\end{proposition}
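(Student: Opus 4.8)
The plan is to establish the identity by squeezing $V(t,x)=\underset{u(\cdot)\in\mathcal{U}[t,T]}{ess\inf}\,Y^{t,x;u}(t)$ between two bounds involving the deterministic quantity $\tilde{V}(t,x):=\inf_{u(\cdot)\in\mathcal{U}^{t}[t,T]}Y^{t,x;u}(t)$. First I would record that $\tilde{V}(t,x)$ is genuinely a deterministic number: as noted in the text, for $u\in\mathcal{U}^{t}[t,T]$ the triple $(X^{t,x;u},Y^{t,x;u},Z^{t,x;u})$ is $(\mathcal{F}_{s}^{t})_{t\leq s\leq T}$-adapted, so $Y^{t,x;u}(t)$ is $\mathcal{F}_{t}^{t}$-measurable and hence $P$-a.s. constant. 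The inequality $V(t,x)\leq\tilde{V}(t,x)$ is then immediate: since $\mathcal{U}^{t}[t,T]\subseteq\mathcal{U}[t,T]$, the essential infimum over the larger family is dominated by $Y^{t,x;u}(t)$ for every $u\in\mathcal{U}^{t}[t,T]$, and taking the infimum over these (constant) costs yields $V(t,x)\leq\tilde{V}(t,x)$, $P$-a.s.

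The substance lies in the reverse inequality $V(t,x)\geq\tilde{V}(t,x)$, for which I would fix an arbitrary $u\in\mathcal{U}[t,T]$ and freeze the information up to time $t$. The point is that $Y^{t,x;u}(t)$ is $\mathcal{F}_{t}$-measurable, while on $[t,T]$ the forward equation (\ref{SEE1-1}) starts from the deterministic datum $x$ and is driven only through the increments $w(\cdot)-w(t)$; thus the $\mathcal{F}_{t}$-dependence of the whole system enters solely through the control. Using the factorization of $(\Omega,\mathcal{F},\mathbb{P})$ into the $\mathcal{F}_{t}$-past and the future generated by $(w(s)-w(t))_{s\geq t}$ (equivalently, a regular conditional probability), for $P$-a.e. frozen past $\omega'$ the map $s\mapsto u(s,\omega',\cdot)$ defines a control $u^{\omega'}$ that is $(\mathcal{F}_{s}^{t})$-progressively measurable and, by Fubini, inherits the integrability required of $\mathcal{U}$, so $u^{\omega'}\in\mathcal{U}^{t}[t,T]$. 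By the uniqueness of solutions to the SEE (\ref{SEE1-1}) and the BSDE (\ref{BSDE1-2}), together with the independence of the future increments from $\mathcal{F}_{t}$, the frozen solution coincides $P$-a.s. with the one generated by $u^{\omega'}$ under the future-increment filtration, whence $Y^{t,x;u}(t)(\omega')=Y^{t,x;u^{\omega'}}(t)\geq\tilde{V}(t,x)$ for $P$-a.e. $\omega'$. Taking the essential infimum over $u\in\mathcal{U}[t,T]$ gives $V(t,x)\geq\tilde{V}(t,x)$, and combining with the first step yields the claimed identity; in particular $V(t,x)$ is deterministic.

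The hard part will be the rigorous implementation of the conditioning step: one must set up the disintegration of the probability space (or, equivalently, the shift operators on Wiener space) carefully, verify the joint measurability of $\omega'\mapsto u^{\omega'}$ and that $u^{\omega'}\in\mathcal{U}^{t}[t,T]$ off a $P$-null set, and check that the frozen forward-backward system agrees $P$-a.s. with the system driven by the shifted noise under the control $u^{\omega'}$. This is precisely where uniqueness for (\ref{SEE1-1})--(\ref{BSDE1-2}) and the independence of increments are used. The remaining points---integrability of $u^{\omega'}$ and well-posedness of the frozen systems---are routine consequences of the a priori bounds in Lemma \ref{apriori-see} and the standard $L^{p}$ theory of BSDEs, so I would state them briefly and concentrate the argument on the measurable selection of $u^{\omega'}$ and the pathwise matching of the two solutions.
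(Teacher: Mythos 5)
Your proposal is correct in its first half and in its overall strategy, but for the reverse inequality it takes a genuinely different route from the paper. You propose to disintegrate the probability space over the $\mathcal{F}_{t}$-past and show that, for a.e.\ frozen past $\omega'$, the section $u^{\omega'}$ of an arbitrary admissible control lies in $\mathcal{U}^{t}[t,T]$ and generates the section of the original solution, so that $Y^{t,x;u}(t)(\omega')=Y^{t,x;u^{\omega'}}(t)\geq\tilde V(t,x)$. The paper instead avoids disintegration entirely: invoking \cite[Lemma 13]{HJ-17}, it approximates $u\in\mathcal{U}[t,T]$ in $L^{2}$ by controls of the form $u^{m}=\sum_{i}v^{mi}I_{A_{mi}}$ with $\{A_{mi}\}$ an $\mathcal{F}_{t}$-partition and $v^{mi}\in\mathcal{U}^{t}[t,T]$; for such step controls the identity $Y_{t}^{t,x;u^{m}}=\sum_{i}Y_{t}^{t,x;v^{mi}}I_{A_{mi}}\geq\inf_{v\in\mathcal{U}^{t}[t,T]}Y_{t}^{t,x;v}$ is immediate from uniqueness of solutions, and the stability estimate (\ref{eq-diff-contr}) for BSDEs transfers the inequality to $u$ in the limit. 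The trade-off is real: your conditioning argument is conceptually direct and is known to be implementable (in the spirit of pseudo-Markov/flow arguments), but the step you defer---verifying that the $\omega'$-section of the stochastic integrals and of the variational solution of the SEE coincides a.s.\ with the integral/solution driven by the shifted noise under $u^{\omega'}$, with all null sets controlled uniformly in $s$ and with progressive measurability of $\omega'\mapsto u^{\omega'}$---is precisely the heavy technical content, and it is aggravated here by the cylindrical $Q$-Wiener process and the unbounded operators $A,B$. The paper's partition-plus-density argument buys exactly the avoidance of that machinery at the cost of quoting one approximation lemma; if you pursue your route you should expect the ``hard part'' you flag to dominate the proof, whereas the paper's version reduces the whole step to a two-line decomposition and an $L^{2}$-stability bound.
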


\begin{proof}
Noting the inclusion $\mathcal{U}^{t}[t,T]\subset\mathcal{U}[t,T],$ we have $V(t,x)\leq
\inf_{u(\cdot)\in\mathcal{U}^{t}[t,T]}Y^{t,x;u}(t).$ On the other hand, for
any $u(\cdot)\in\mathcal{U}[t,T],$ by \cite[Lemma 13]{HJ-17},  
there exists a sequence $u^{m}$ taking the form
\[
u^{m}(s)=\sum_{i=1}^{N_{m}}v^{m}_i(s)I_{A^m_{i}},\quad  s\in\lbrack t,T],
\]
where $\{A^m_{i}\}_{i=1}^{N_{m}}$ is an $\mathcal{F}_{t}$-partition of $\Omega$
and $v^m_{i}\in\mathcal{U}^{t}[t,T],$ such that
\[
{\mathbb{E}}\B[\int_{t}^{T}|u^{m}(s)-u(s)|_{U}^{2}{d}
t\B]\rightarrow 0, \quad \text{ as }m\rightarrow\infty.
\]
From the a priori estimate of  BSDEs,
\begin{equation}%
\begin{split}
&  \mathbb{E}\B[\B\vert Y^{t,x;u^{m}}(t)-Y^{t,x;u}(t)\B\vert^{2}\B]\\
&  \leq C\mathbb{E}\B[\int_{t}^{T}\left\vert g(s,X^{t,x;u^m}(s),Y
^{t,x;u}(s),Z^{t,x;u}(s),u^{m}(s))-g(s,X^{t,x;u}(s),Y^{t,x;u}(s)
,Z^{t,x;u}(s),u(s))\right\vert ^{2}ds\B]\\
&  \leq C{\mathbb{E}}\B [\int_{t}^{T}|u^{m}(s)-u(s)|_{U}^{2}{d}t\B ]
 \rightarrow 0,\quad  \text{as}\ m\rightarrow\infty.
\end{split}
\label{eq-diff-contr}%
\end{equation}
Note that for $s\in\lbrack t,T]$,
\[
\B(X^{t,x;u^{m}}(s),\ Y^{t,x;u^{m}}(s),\ Z^{t,x;u^{m}}(s)\B)=\B(\sum
_{i=1}^{N_m}X^{t,x;v^m_{i}}(s)I_{A^m_{i}},\ \sum_{i=1}^{N_m}Y^{t,x;v^m_{i}}(s)
I_{A^m_{i}},\ \sum_{i=1}^{N_m}Z^{t,x;v^m_{i}}(s)I_{A^m_{i}}\B).
\]
Then
\begin{equation}
Y^{t,x;u^{m}}(t)=\ \sum_{i=1}^{N_m}Y^{t,x;v^m_{i}}(t)I_{A^m_{i}}\ \geq\ \sum_{i=1}
^{N_m}\underset{v\in\mathcal{U}^{t}[t,T]}{\inf}Y^{t,x;v}(t)I_{A^m_{i}
}\ =\ \underset{v\in\mathcal{U}^{t}[t,T]}{\inf}Y^{t,x;v}(t). \label{eq-deff-2}%
\end{equation}
This, combining (\ref{eq-diff-contr}), implies
\[
Y^{t,x;u}(t)\geq\underset{v\in\mathcal{U}^{t}[t,T]}{\inf}Y^{t,x;v}(t).
\]
Thus, $V(t,x)\geq\inf_{v\in\mathcal{U}^{t}[t,T]}Y^{t,x;v}(t)$. Therefore,
$V(t,x)=\inf_{v\in\mathcal{U}^{t}[t,T]}Y^{t,x;v}(t).$
\end{proof}

\begin{lemma}
\label{Le3-3} Assume $(H4)$ and $(H6)$. Then there exists a constant $C>0$
depending on $\delta$, $K_1$,   $L_3$, $L_4$ and $L_5$   such that, for each $u\in\mathcal{U}[t,T]$ and $\xi,\xi^{\prime
}\in L^{2}(\mathcal{F}_{t};H)$,
\begin{equation}
	\begin{split}
&	\mathbb{E}\Big[\sup\limits_{t\leq s\leq T}\left(  \Vert X^{t,\xi;u}(s)
	-X^{t,\xi^{\prime};u}(s)\Vert_{H}^{2}+|Y^{t,\xi;u}(s)-Y^{t,\xi^{\prime
		};u}(s)|^{2}\right)  \\&\ \ \ \ \ \ \ \ \ +\int_{t}^{T} 
| Z^{t,\xi;u}(s)-Z^{t,\xi^{\prime};u}(s)
	|^2
	ds\Big|\mathcal{F}_{t}\Big]\leq C\Vert\xi-\xi^{\prime}\Vert_{H}^{2};
	\end{split}
\end{equation}
\begin{equation}
\mathbb{E}\Big[\sup\limits_{t\leq s\leq T}\left(  \Vert X^{t,\xi;u}(s)
\Vert_{H}^{2}+|Y^{t,\xi;u}(s)|^{2}\right)  +\int_{t}^{T}| Z^{t,\xi
	;u}(s)
|^2ds\Big|\mathcal{F}_{t}\Big]\leq C\big(1+\Vert\xi\Vert_{H}^{2}\big).
\label{est-initial-2}%
\end{equation}

\end{lemma}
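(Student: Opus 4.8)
The plan is to decouple the forward and backward estimates: the bounds for the state process $X^{t,\xi;u}$ come from Lemma~\ref{apriori-see}, and those for $(Y^{t,\xi;u},Z^{t,\xi;u})$ from the standard conditional a priori estimates for real-valued BSDEs (with $Z$ taking values in the Hilbert space $\mathcal{L}_2^0(K,\mathbb{R})$), into which the state estimates are then fed. A recurring simplification is that the same control $u$ is used for both initial data, so it is identical in every coefficient difference and cancels; this is what keeps the constant in the first inequality free of any dependence on $u$.

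For the a priori state bound I would apply Lemma~\ref{apriori-see}(i) with $\alpha=1$ directly to $X^{t,\xi;u}$, writing $\tilde a(s,z):=a(s,z,u(s))$ and $\tilde b(s,z):=b(s,z,u(s))$. These are globally Lipschitz in $z$ with constant $L$ because $a_x,b_x$ are bounded by $L$, while the free terms $\tilde a(s,0)=a(s,0,u(s))$ and $\tilde b(s,0)=b(s,0,u(s))$ are uniformly bounded by $(H6)$; hence Lemma~\ref{apriori-see}(i) yields $\mathbb{E}[\sup_{t\le s\le T}\|X^{t,\xi;u}_s\|_H^2\,|\,\mathcal{F}_t]\le C(1+\|\xi\|_H^2)$ with $C$ universal. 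For the difference, I would set $\hat X(s):=X^{t,\xi;u}_s-X^{t,\xi^{\prime};u}_s$, which solves an equation of the same form with $\hat X(t)=\xi-\xi^{\prime}$ and coefficients $z\mapsto a(s,z+X^{t,\xi^{\prime};u}_s,u(s))-a(s,X^{t,\xi^{\prime};u}_s,u(s))$ (and the analogue for $b$) that are $L$-Lipschitz and vanish at $z=0$. Lemma~\ref{apriori-see}(i) then gives $\mathbb{E}[\sup_{t\le s\le T}\|\hat X(s)\|_H^2\,|\,\mathcal{F}_t]\le C\|\xi-\xi^{\prime}\|_H^2$.

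For the backward estimates I would write the BSDE satisfied by $\hat Y:=Y^{t,\xi;u}-Y^{t,\xi^{\prime};u}$, $\hat Z:=Z^{t,\xi;u}-Z^{t,\xi^{\prime};u}$: its terminal value is $h(X^{t,\xi;u}_T)-h(X^{t,\xi^{\prime};u}_T)$ and its generator is the difference $k(r,X^{t,\xi;u}_r,Y^{t,\xi;u}_r,Z^{t,\xi;u}_r,u_r)-k(r,X^{t,\xi^{\prime};u}_r,Y^{t,\xi^{\prime};u}_r,Z^{t,\xi^{\prime};u}_r,u_r)$, which by the Lipschitz continuity of $h$ and of $k$ in $(x,y,z)$ are dominated by $L\|\hat X_T\|_H$ and $L(\|\hat X_r\|_H+|\hat Y_r|+|\hat Z_r|)$ respectively. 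The conditional a priori estimate for BSDEs, combined with $(\int_t^T\|\hat X_r\|_H\,dr)^2\le C\sup_{t\le r\le T}\|\hat X_r\|_H^2$, gives $\mathbb{E}[\sup_{t\le s\le T}|\hat Y_s|^2+\int_t^T|\hat Z_s|^2\,ds\,|\,\mathcal{F}_t]\le C\,\mathbb{E}[\sup_{t\le r\le T}\|\hat X_r\|_H^2\,|\,\mathcal{F}_t]$; inserting the forward difference estimate closes the first inequality. The second inequality is obtained identically, replacing the Lipschitz bounds by the linear growth bounds on $h$ and $k$ and using the forward a priori bound in place of the difference bound.

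The algebra here is routine; the points that need attention are that every estimate must be written in $\mathcal{F}_t$-conditional form, so I would invoke the conditional versions of the B-D-G and Gr\"{o}nwall arguments that already underlie Lemma~\ref{apriori-see} and the BSDE a priori estimates, and check that the Hilbert-valued $Z$ causes no trouble since $Y$ is scalar and $k$ is Lipschitz in $|z|$. The only real (and mild) obstacle is the forward--backward coupling: the terminal datum and generator of the BSDE depend on the whole state trajectory, so the state estimates must be proved first and substituted, and one must verify that composing them with the BSDE estimate does not introduce any dependence on the particular control $u$ beyond the universal constant.
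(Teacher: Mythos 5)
Your proposal is correct and follows essentially the same route as the paper: the forward bounds come from Lemma \ref{apriori-see} (applied to the solution itself and, for the first inequality, to the difference process with zero free terms), and these are then fed into the standard conditional a priori estimates for the real-valued BSDE via the Lipschitz/linear-growth bounds on $h$ and $k$. The paper only writes out the second inequality and notes the first is analogous, so your treatment of the difference estimate is simply a more explicit version of the same argument.
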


\begin{proof}
We only prove the second one, and the first one can be handled similarly. From
Lemma \ref{apriori-see}, we have
\begin{align*}
\mathbb{E}\Big[\sup\limits_{t\leq s\leq T}\Vert X^{t,\xi;u}(s)\Vert_{H}
^{2}\Big|\mathcal{F}_{t}\Big]  &  \leq C\mathbb{E}\Big[\Vert\xi\Vert_{H}%
^{2}+\int_{t}^{T}\Vert a\left(  s,0,u(s)\right)  \Vert_{H}^{2}ds+\int_{t}%
^{T}\Vert b\left(  s,0,u(s)\right)  \Vert_H%
^{2}ds\Big|\mathcal{F}_{t}\Big]\\
&  \leq C\Big(1+\Vert\xi\Vert_{H}^{2}\Big).
\end{align*}
Then from the basic estimate of  BSDEs, we obtain
\begin{align*}
&  \mathbb{E}\Big[\sup\limits_{t\leq s\leq T}|Y^{t,\xi;u}(s)|^{2}+\int%
_{t}^{T}|Z^{t,\xi
	;u}(s)
|^2ds\Big|\mathcal{F}_{t}\Big]\\
&  \leq C\mathbb{E}\Big[\Vert\xi\Vert_{H}^{2}+\Big(\int_{t}^{T}\b|k(
s,X^{t,\xi;u}(s),0,0,u(s)) \b| ds\Big)^{2}\Big|\mathcal{F}_{t}\Big]\\
&  \leq C\big(1+\Vert\xi\Vert_{H}^{2}\big).
\end{align*}
This completes the proof.
\end{proof}

\begin{lemma}
\label{Le3-4} Under $(H4)$ and $(H6)$, we have for some constant $C>0$
depending on $\delta$, $K_1$,   $L_3$, $L_4$ and $L_5$  such that, for each $t\in\lbrack0,T]$ and
$x,x^{\prime}\in H$,
\[
\big|V(t,x)-V(t,x^{\prime})\big|\leq C\Vert x-x^{\prime}\Vert_{H}\quad \text{and}\quad |V(t,x)|\leq C\big(1+\Vert x\Vert_{H}\big).
\]

\end{lemma}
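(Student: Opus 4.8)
The plan is to reduce the essential infimum defining $V$ to an honest pointwise infimum of deterministic quantities, and then push the control-uniform a priori bounds of Lemma \ref{Le3-3} through this infimum. By Proposition \ref{Le3-2} we have $V(t,x)=\inf_{u(\cdot)\in\mathcal{U}^{t}[t,T]}Y^{t,x;u}(t)$, and for $u(\cdot)\in\mathcal{U}^{t}[t,T]$ the value $Y^{t,x;u}(t)$ is $\mathcal{F}_{t}^{t}$-measurable, hence a deterministic constant. Thus both claims reduce to estimating the numbers $Y^{t,x;u}(t)$ uniformly in $u$, and then using that the infimum of a family of functions sharing a common modulus inherits that modulus.

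For the Lipschitz bound, I would fix $x,x'\in H$ and $u(\cdot)\in\mathcal{U}^{t}[t,T]$. The first estimate of Lemma \ref{Le3-3}, applied with $\xi=x$ and $\xi'=x'$, controls the conditional expectation of the supremum over $[t,T]$; bounding the $s=t$ term by this supremum and using that $Y^{t,x;u}(t)$ and $Y^{t,x';u}(t)$ are deterministic (so the conditional expectation acts trivially), I obtain $|Y^{t,x;u}(t)-Y^{t,x';u}(t)|\leq C\Vert x-x'\Vert_{H}$ with $C$ independent of $u$. Then, given $\varepsilon>0$, I would pick a near-optimal $u^{\varepsilon}(\cdot)\in\mathcal{U}^{t}[t,T]$ for $x'$, so that $V(t,x)-V(t,x')\leq Y^{t,x;u^{\varepsilon}}(t)-Y^{t,x';u^{\varepsilon}}(t)+\varepsilon\leq C\Vert x-x'\Vert_{H}+\varepsilon$; letting $\varepsilon\downarrow0$ and exchanging the roles of $x$ and $x'$ yields $|V(t,x)-V(t,x')|\leq C\Vert x-x'\Vert_{H}$.

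For the linear growth, I would invoke the second estimate \eqref{est-initial-2} of Lemma \ref{Le3-3}, again bounding the $s=t$ term by the supremum and using determinism, to get $|Y^{t,x;u}(t)|\leq C(1+\Vert x\Vert_{H})$ uniformly in $u(\cdot)\in\mathcal{U}^{t}[t,T]$. Since every $Y^{t,x;u}(t)$ then lies in $[-C(1+\Vert x\Vert_{H}),\,C(1+\Vert x\Vert_{H})]$, so does its infimum $V(t,x)$, giving $|V(t,x)|\leq C(1+\Vert x\Vert_{H})$. The only genuinely delicate point is the first reduction step: one must know that the essential infimum over $\mathcal{U}[t,T]$ coincides with the infimum over the smaller class $\mathcal{U}^{t}[t,T]$ of Wiener-shift-adapted controls — precisely the content of Proposition \ref{Le3-2} — since only on this class is $Y^{t,x;u}(t)$ a deterministic constant, which is what legitimizes the pointwise manipulations of the infimum above; everything else is a direct transcription of the estimates of Lemma \ref{Le3-3}.
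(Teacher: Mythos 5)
Your proposal is correct and follows essentially the same route as the paper: reduce via Proposition \ref{Le3-2} to the infimum over $\mathcal{U}^{t}[t,T]$ (where $Y^{t,x;u}(t)$ is deterministic), apply the two conditional estimates of Lemma \ref{Le3-3} uniformly in the control, and pass the bound through the infimum. The paper does the last step via the inequality $|\inf_{v}a_{v}-\inf_{v}b_{v}|\leq\sup_{v}|a_{v}-b_{v}|$ rather than your $\varepsilon$-optimal control argument, but these are the same idea.
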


\begin{proof}
Applying Proposition \ref{Le3-2} and Lemma \ref{Le3-3}, we have
\begin{align*}
|V(t,x)-V(t,x^{\prime})|  &  =\Big\vert\underset{v\in\mathcal{U}
^{t}[t,T]}{\inf}Y^{t,x;v}(t)-\underset{v\in\mathcal{U}^{t}[t,T]}{\inf}
Y^{t,x^{\prime};v}(t)\Big\vert\\
&  \leq\underset{v\in\mathcal{U}^{t}[t,T]}{\sup}\Big\vert Y^{t,x;v}(t)
-Y^{t,x^{\prime};v}(t)\Big\vert\\
&  \leq\underset{v\in\mathcal{U}^{t}[t,T]}{\sup}\Big\{\mathbb{E}
\Big[\sup\limits_{t\leq s\leq T}\Big\vert Y^{t,x;v}(s)-Y^{t,x^{\prime}
;v}(s)\Big\vert^{2}\Big|\mathcal{F}_{t}\Big]\Big\}^{\frac{1}{2}}\\
&  \leq C\Vert x-x^{\prime}\Vert_{H}.
\end{align*}
The other assertion is proved  in the same manner.
\end{proof}

\begin{proposition}
\label{pro3-5} Suppose $(H4)$ and $(H6)$. Then for each $\xi\in$
$L^{2}(\mathcal{F}_{t};H)$, we have
\[
V(t,\xi)=\underset{u\in\mathcal{U}[t,T]}{ess\inf}Y^{t,\xi;u}(t).
\]
On the other hand, for each $\varepsilon>0$, there exists an admissible
control $u_{\varepsilon}(\cdot)\in\mathcal{U}[t,T]$ such that
\begin{equation}
V(t,\xi)\geq Y^{t,\xi;u_{\varepsilon}}(t)-\varepsilon, \label{eq4-8}%
\end{equation}

\end{proposition}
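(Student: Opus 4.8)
The plan is to prove both assertions by reducing the random initial datum $\xi$ to deterministic ones, exploiting two Lipschitz-type estimates that are \emph{uniform in the control}. First, since $Y_t^{t,\xi;u}$ is $\mathcal{F}_t$-measurable, the conditional estimate of Lemma \ref{Le3-3} collapses to the pathwise bound $|Y_t^{t,\xi;u}-Y_t^{t,\xi';u}|\le C^{1/2}\Vert\xi-\xi'\Vert_H$, $P$-a.s., with $C$ independent of $u$; combined with the Lipschitz continuity of the deterministic map $V(t,\cdot)$ from Lemma \ref{Le3-4}, this will let me pass from simple (finitely or countably valued) initial data to a general $\xi\in L^2(\mathcal{F}_t;H)$. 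The second ingredient is uniqueness of solutions: if $A\in\mathcal{F}_t$ and $\xi=\xi'$ on $A$, then $(X^{t,\xi;u},Y^{t,\xi;u})=(X^{t,\xi';u},Y^{t,\xi';u})$ on $A$, which is what makes the splitting of initial data and the pasting of controls over $\mathcal{F}_t$-partitions legitimate.

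For the inequality $V(t,\xi)\le \operatorname*{ess\,inf}_{u}Y_t^{t,\xi;u}$, I would first fix $u\in\mathcal{U}[t,T]$ and treat a simple datum $\xi=\sum_i x_i\mathbf{1}_{A_i}$ with $A_i\in\mathcal{F}_t$. On each $A_i$ uniqueness gives $Y_t^{t,\xi;u}=Y_t^{t,x_i;u}$, while the definition (\ref{value-f}) yields $V(t,x_i)=\operatorname*{ess\,inf}_{v}Y_t^{t,x_i;v}\le Y_t^{t,x_i;u}$; summing over $i$ gives $V(t,\xi)\le Y_t^{t,\xi;u}$, $P$-a.s. A general $\xi$ is approximated in $L^2$ by simple $\xi^n$, and the two Lipschitz estimates give $V(t,\xi^n)\to V(t,\xi)$ and $Y_t^{t,\xi^n;u}\to Y_t^{t,\xi;u}$ in $L^2$, so the inequality passes to the limit. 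Taking the essential infimum over $u$ then gives one direction of the equality.

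The reverse inequality is established together with the construction of the $\varepsilon$-optimal control, and is the core of the argument. Fix a countable dense set $\{x_j\}\subset H$ and, for $\varepsilon>0$, partition $H$ into Borel sets $\{H_j\}$ of diameter less than $\varepsilon$, each carrying a representative $x_j\in H_j$; setting $B_j:=\{\xi\in H_j\}\in\mathcal{F}_t$ produces a countably valued $\mathcal{F}_t$-measurable approximation with $\Vert\xi-x_j\Vert_H<\varepsilon$ \emph{pointwise} on $B_j$. Since controls in $\mathcal{U}^t[t,T]$ are independent of $\mathcal{F}_t$ and the $x_j$ are deterministic, $Y_t^{t,x_j;u}$ is a real number for $u\in\mathcal{U}^t[t,T]$, so Proposition \ref{Le3-2} lets me choose $u^j\in\mathcal{U}^t[t,T]$ with $Y_t^{t,x_j;u^j}\le V(t,x_j)+\varepsilon$. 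I then paste $u_\varepsilon:=\sum_j u^j\mathbf{1}_{B_j}$. By uniqueness, $Y_t^{t,\xi;u_\varepsilon}=Y_t^{t,\xi;u^j}$ on $B_j$, whence the uniform Lipschitz bound and the Lipschitz continuity of $V(t,\cdot)$ give $Y_t^{t,\xi;u_\varepsilon}\le V(t,x_j)+\varepsilon+C^{1/2}\varepsilon\le V(t,\xi)+C'\varepsilon$ on $B_j$, hence $P$-a.s. This yields (\ref{eq4-8}) after rescaling $\varepsilon$, and taking the essential infimum gives $\operatorname*{ess\,inf}_u Y_t^{t,\xi;u}\le V(t,\xi)$, completing the equality.

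The step I expect to be the main obstacle is verifying that the pasted control $u_\varepsilon$ is genuinely admissible, i.e.\ $\mathbb{E}\int_t^T|u_\varepsilon(s)|_U^\alpha\,ds<\infty$ for every $\alpha\ge1$. By independence of each $u^j$ from $\mathcal{F}_t$ this reduces to the summability of $\sum_j P(B_j)\,\mathbb{E}\int_t^T|u^j(s)|_U^\alpha\,ds$, which must be arranged through a careful choice of the partition $\{B_j\}$ and of the near-optimal controls $u^j$, whose norms are controlled using $\xi\in L^2$ and the growth conditions in $(H5)$. Once this bookkeeping is settled, the remaining passages—uniqueness on $\mathcal{F}_t$-sets, the two uniform Lipschitz estimates, and the $L^2$-limit arguments—are routine.
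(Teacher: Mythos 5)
Your proposal follows essentially the same route as the paper's proof: approximate $\xi$ pointwise by a countably valued $\mathcal{F}_t$-measurable function, use the control-uniform Lipschitz estimates of Lemmas \ref{Le3-3} and \ref{Le3-4} together with locality of the state/utility on $\mathcal{F}_t$-sets, pick $\varepsilon$-optimal controls in $\mathcal{U}^{t}[t,T]$ for each deterministic representative via Proposition \ref{Le3-2}, and paste them over the partition. The admissibility question you flag for the pasted control $\sum_j 1_{B_j}u^j$ (summability of $\sum_j P(B_j)\,\mathbb{E}\int_t^T|u^j(s)|_U^{\alpha}\,ds$) is a legitimate technical point, but the paper's own proof performs the identical countable pasting without addressing it, so your argument is not weaker than the one in the text.
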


\begin{proof}
We take a sequence $\xi^{m}$ $=\sum_{i=1}^{N_m}x_{i}^{m}I_{A_{i}^{m}}$ such that
$\mathbb{E}\left[  \Vert\xi^{m}-\xi\Vert_{H}^{2}\right]  \rightarrow0$ as
$m\rightarrow\infty$, where $\left\{  A_{i}^{m}\right\}  _{i=1}^{N_m}$ is a
$\mathcal{F}_{t}$-partition of $\Omega$ and $x_{i}^{m}\in H$. Note that for
$s\in\lbrack t,T]$,
\[
\big(X^{t,\xi_{m};u}(s),Y^{t,\xi_{m};u}(s),Z^{t,\xi_{m};u}(s)
\big)=\Big(\sum_{i=1}^{N_m}X^{t,x_{i}^{m};u}(s)I_{A_{i}^{m}},\sum_{i=1}
^{N_m}Y^{t,x_{i}^{m};u}(s)I_{A_{i}^{m}},\sum_{i=1}^{N_m}Z^{t,x_{i}^{m}
;u}(s)I_{A_{i}^{m}}\Big).
\]
Then we have
\begin{equation}%
\begin{split}
\underset{u\in\mathcal{U}[t,T]}{ess\inf}Y^{t,\xi_{m};u}(t)  &
=\underset{u\in\mathcal{U}[t,T]}{ess\inf}\sum\limits_{i=1}^{N_m}Y
^{t,x_{i}^{m};u}(t)I_{A_{i}^{m}}\\
&  =\sum\limits_{i=1}^{N_m}\Big(\underset{u\in\mathcal{U}[t,T]}{ess\inf}
Y^{t,x_{i}^{m};u}(t)\Big)I_{A_{i}^{m}}\\
&  =\sum\limits_{i=1}^{N_m}V\left(  t,x_{i}^{m}\right)  I_{A_{i}^{m}}\\
&  =V\left(  t,\xi^{m}\right)  .
\end{split}
\label{eq-new-111}%
\end{equation}
By Lemmas \ref{Le3-3} and \ref{Le3-4}, we have
\begin{equation}%
\Big\vert\underset{u\in\mathcal{U}[t,T]}{ess\inf}Y^{t,\xi_{m}
;u}(t)-\underset{u\in\mathcal{U}[t,T]}{ess\inf}Y^{t,\xi;u}(t)\Big\vert  
\leq\underset{u\in\mathcal{U}[t,T]}{ess\sup}\big\vert Y^{t,\xi_{m}
;u}(t)-Y^{t,\xi;u}(t)\big\vert   \leq C\Vert\xi^{m}-\xi\Vert_{H},
\label{eq-new-112}%
\end{equation}
\begin{equation}
\big\vert V(t,\xi^{m})-V\left(  t,\xi\right)  \big\vert\leq C\Vert\xi^{m}-\xi\Vert_{H}.
\label{eq-new-113}%
\end{equation}
Combining (\ref{eq-new-111}), (\ref{eq-new-112}) and (\ref{eq-new-113}), we
get
\[
\Big\vert\underset{u\in\mathcal{U}[t,T]}{ess\inf}Y^{t,\xi;u}(t)-V\left(
t,\xi\right)  \Big\vert\leq C\Vert\xi^{m}-\xi\Vert_{H}.
\]
Then the desired result is deduced by letting $m\rightarrow\infty$.

Next we consider (\ref{eq4-8}). From Proposition 11 in Chapter 1 of
\cite{Di00}, we can find elementary function $\xi^{\prime}=\sum
\limits_{i=1}^{\infty}1_{A_{i}}x_{i}$, where $x_i\in H$, $i\geq 1$ and  $\{A_{i}\}_{i=1}^{\infty}$ is an $\mathcal{F}_{t}$-partition of $\Omega$, such that
\[
\Vert\xi^{\prime}-\xi\Vert_{H}\leq\varepsilon.
\]
Then from Lemmas \ref{Le3-3} and \ref{Le3-4}, we have
\[
\big|Y^{t,\xi;u}(t)-Y^{t,\xi^{\prime};u}(t)\big|\leq C\varepsilon
,\quad\big|V(t,\xi)-V(t,\xi^{\prime})\big|\leq C\varepsilon.
\]
For each $x_{i}$, by Proposition \ref{Le3-2}, we can take $u^{i}\in
\mathcal{U}^{t}[t,T]$ such that
\[
V(t,x_{i})\geq Y^{t,x_{i};u^{i}}(t)-\varepsilon.
\]
Let $u(\cdot)=\sum\limits_{i=1}^{\infty}1_{A_{i}}u^{i}(\cdot)$. Then
\begin{align*}
Y^{t,\xi;u}(t)  &  =Y^{t,\xi^{\prime};u}(t)+Y^{t,\xi;u}(t)
-Y^{t,\xi^{\prime};u}(t)\\
&  \leq Y^{t,\xi^{\prime};u}(t)+C\varepsilon\\
&  =\sum\limits_{i=1}^{\infty}1_{A_{i}}Y^{t,x_{i};u_{i}}(t)+C\varepsilon\\
&  \leq\sum\limits_{i=1}^{\infty}1_{A_{i}}(V(t,x_{i})+\varepsilon
)+C\varepsilon\\
&  \leq V(t,\xi^{\prime})+C\varepsilon\\
&  \leq V(t,\xi)+C\varepsilon.
\end{align*}
This completes the proof by noting that $\varepsilon$ can be arbitrary.
\end{proof}

\begin{proof}
[Proof of Theorem \ref{DPP}]We first prove $V(t,x)\geq\inf_{u(\cdot
)\in\mathcal{U}^{t}[t,T]}G_{t,t+\delta}^{t,x;u}\big[V(t+\delta,X^{t,x;u}%
(t+\delta))\big].$ Given any $u\in\mathcal{U}[t,T].$ Note that
\begin{align*}
&  Y^{t,x;u}(s) =h(X^{t,x;u}(T))+\int_{s}^{T}k(r,X^{t,x;u}(r),Y^{t,x;u}
(r),X^{t,x;u}(r),u(r))dr-\int_{s}^{T}X^{t,x;u}(r)dw({r})\\
&  =Y^{t,x;u}(t+\delta)+\int_{s}^{t+\delta}k(r,X^{t,x;u}(r),Y^{t,x;u}
(r),X^{t,x;u}(r),u(r))dr-\int_{s}^{t+\delta}X^{t,x;u}(r)dw({r}),\text{\quad
}s\in\lbrack t,t+\delta].
\end{align*}
We derive
\begin{equation}
G_{t,T}^{t,x;u}\big[h(X^{t,x;u}(T))\big]=G_{t,t+\delta}^{t,x;u}\big[Y^{t,x;u}
(t+\delta)\big]. \label{Myeq4-6}%
\end{equation}
On the other hand, by the uniqueness of the solution to (\ref{SEE1-1}), we
have
\[
X^{t,x;u}(s)=X^{t+\delta,X^{t,x;u}(t+\delta);u}(s),\quad s\in\lbrack
t+\delta,T].
\]
Thus,
\begin{align*}
 Y^{t,x;u}(s)  =&h(X^{t+\delta,X^{t,x;u}(t+\delta);u}(T))+\int_{s}^{T}k(r,X^{t+\delta
,X^{t,x;u}(t+\delta);u}(s),Y^{t,x;u}(r),Z^{t,x;u}(r),u(r))dr\\&-\int_{s}
^{T}Z^{t,x;u}(r)dw({r}), \quad  s\in \lbrack t+\delta,T].
\end{align*}
So, from the uniqueness of solutions of BSDEs,
\begin{equation}
Y^{t,x;u}(s)=Y^{t+\delta,X^{t,x;u}(t+\delta);u}(s),\quad s\in\lbrack
t+\delta,T]. \label{Myeq4-9}%
\end{equation}
Consequently,
\begin{equation}
G_{t,t+\delta}^{t,x;u}\big[Y^{t,x;u}(t+\delta)\big]=G_{t,t+\delta}
^{t,x;u}\big[Y^{t+\delta,X^{t,x;u}(t+\delta);u}(t+\delta)\big].
\label{Myeq4-16}%
\end{equation}

From (\ref{Myeq4-6}) and (\ref{Myeq4-16}), 
\begin{equation}%
\begin{split}
V(t,x)  &  =\underset{u(\cdot)\in\mathcal{U}[t,T]}{ess\inf}G_{t,T}
^{t,x;u}\big[h(X^{t,x;u}(T))\big]\\
&  =\underset{u(\cdot)\in\mathcal{U}[t,T]}{ess\inf}G_{t,t+\delta}
^{t,x;u}\big[Y^{t+\delta,X^{t,x;u}(t+\delta);u}(t+\delta)\big].
\end{split}
\label{Myeq4-17-1}%
\end{equation}
and by Proposition \ref{Le3-2},
\begin{equation}%
	\begin{split}
V(t,x)  &  =\inf_{u(\cdot)\in\mathcal{U}^{t}[t,T]}Y^{t,x;u}(t)\\
&  =\inf_{u(\cdot)\in\mathcal{U}^{t}[t,T]}G_{t,T}^{t,x;u}\big[h(X^{t,x;u}
(T))\big]\\
&  =\inf_{u(\cdot)\in\mathcal{U}^{t}[t,T]}G_{t,t+\delta}^{t,x;u}
\big[Y^{t+\delta,X^{t,x;u}(t+\delta);u}(t+\delta)\big].
\end{split}
\label{Myeq4-17-2}%
\end{equation}
Applying Proposition \ref{pro3-5},
\[
Y^{t+\delta,X^{t,x;u}(t+\delta);u}(t+\delta)\geq V(t+\delta,X^{t,x;u}
(t+\delta)).
\]
From the comparison theorem of BSDEs, we have
\[
G_{t,t+\delta}^{t,x;u}\big[Y^{t+\delta,X^{t,x;u}(t+\delta);u}(t+\delta
)\big]\geq G_{t,t+\delta}^{t,x;u}\big[V(t+\delta,X^{t,x;u}(t+\delta))\big].
\]
Thus taking infimum over $u(\cdot)\in\mathcal{U}^{t}[t,T]$ on the both sides,
we get from (\ref{Myeq4-17-2}) that
\begin{equation}\label{eq5-15}
V(t,x)\geq\inf_{u(\cdot)\in\mathcal{U}^{t}[t,T]}G_{t,t+\delta}^{t,x;u}
\big[V(t+\delta,X^{t,x;u}(t+\delta))\big].
\end{equation}

Next we prove $V(t,x)\leq\underset{u(\cdot)\in\mathcal{U}[t,t+\delta
]}{ess\inf}G_{t,t+\delta}^{t,x;u}[V(t+\delta,X^{t,x;u}(t+\delta))].$ Fix any
$u(\cdot)\in\mathcal{U}[t,t+\delta].$ From Proposition \ref{pro3-5}, for any
$\varepsilon>0$, we can find an admissible control $\bar{u}(\cdot
)\in\mathcal{U}[t+\delta,T]$ such that
\[
V(t+\delta,X^{t,x;u}(t+\delta))\geq Y^{t+\delta,X^{t,x;u}(t+\delta);\bar{u}
}(t+\delta)-\varepsilon.
\]
Set $\tilde{u}(\cdot):=u(\cdot)I_{[t,t+\delta]}+\bar{u}(\cdot)I_{(t+\delta
,T]}\in\mathcal{U}[t,T],$ from (\ref{Myeq4-17-1}) and the comparison theorem
of BSDEs, we get
\begin{align*}
V(t,x)  &  =\underset{u(\cdot)\in\mathcal{U}[t,T]}{ess\inf}G_{t,t+\delta
}^{t,x;u}\big[Y^{t+\delta,X^{t,x;u}(t+\delta);u}(t+\delta)\big]\\
&  \leq\underset{u(\cdot)\in\mathcal{U}[t,T]}{ess\inf}G_{t,t+\delta
}^{t,x;\tilde{u}}\big[Y^{t+\delta,X^{t,x;\tilde{u}}(t+\delta);\tilde{u}
}(t+\delta)\big]\\
&  =\underset{u(\cdot)\in\mathcal{U}[t,T]}{ess\inf}G_{t,t+\delta}
^{t,x;u}\big[Y^{t+\delta,X^{t,x;u}(t+\delta);\bar{u}}(t+\delta)\big]\\
&  \leq\underset{u(\cdot)\in\mathcal{U}[t,T]}{ess\inf}G_{t,t+\delta}
^{t,x;u}\big[V(t+\delta,X^{t,x;u}(t+\delta))+\varepsilon\big].
\end{align*}
Then by the a priori estimate of  BSDEs,
\[
V(t,x)\leq\underset{u(\cdot)\in\mathcal{U}[t,T]}{ess\inf}G_{t,t+\delta
}^{t,x;u}\big[V(t+\delta,X^{t,x;u}(t+\delta))\big]+C\varepsilon.
\]
Letting $\varepsilon\rightarrow0$, we obtain
\[
V(t,x)\leq\underset{u(\cdot)\in\mathcal{U}[t,T]}{ess\inf}G_{t,t+\delta
}^{t,x;u}\big[V(t+\delta,X^{t,x;u}(t+\delta))\big].
\]
This, combines with (\ref{eq5-15}), implies the desired result.
\end{proof}

Now we state the time-continuity  of the value function $V$.
\begin{proposition}
\label{Le3-6} Assume $(H4)$ and $(H6)$ hold. Then $V$ is continuous in $t.$
\end{proposition}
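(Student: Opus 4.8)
The plan is to establish continuity of $t\mapsto V(t,x)$ for each fixed $x$ directly from the dynamic programming principle (Theorem \ref{DPP}), reducing the whole matter to a single estimate on the backward semigroup over a short interval. Fix $x\in H$ and $0\le t<t'\le T$, write $\delta=t'-t$, and apply the DPP at $t$. Since $V(t',x)$ is a deterministic constant,
\[
|V(t,x)-V(t',x)|=\Big|\inf_{u\in\mathcal{U}^{t}[t,t']}G_{t,t'}^{t,x;u}\big[V(t',X^{t,x;u}(t'))\big]-V(t',x)\Big|\le\sup_{u\in\mathcal{U}^{t}[t,t']}\Big|G_{t,t'}^{t,x;u}\big[V(t',X^{t,x;u}(t'))\big]-V(t',x)\Big|,
\]
so it is enough to bound, uniformly in $u$, the deviation of the backward semigroup started from $V(t',X^{t,x;u}(t'))$ from the number $V(t',x)$; note that this single estimate simultaneously yields right continuity at $t$ and left continuity at $t'$.

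The key step is to split this deviation by inserting the backward semigroup with the constant terminal datum $V(t',x)$:
\[
G_{t,t'}^{t,x;u}\big[V(t',X^{t,x;u}(t'))\big]-V(t',x)=\Big(G_{t,t'}^{t,x;u}\big[V(t',X^{t,x;u}(t'))\big]-G_{t,t'}^{t,x;u}\big[V(t',x)\big]\Big)+\Big(G_{t,t'}^{t,x;u}\big[V(t',x)\big]-V(t',x)\Big).
\]
For the first bracket I would invoke the a priori stability of the BSDE with respect to its terminal value together with the uniform Lipschitz continuity of $V(t',\cdot)$ (Lemma \ref{Le3-4}), which bounds it by $C\,\mathbb{E}\|X^{t,x;u}(t')-x\|_{H}$; choosing $x\in\mathcal{V}$ and applying the continuity estimate for the state equation (Lemma \ref{conti-est-1}) controls this by $C(1+\|x\|_{\mathcal{V}})\delta^{1/2}$. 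For the second bracket, letting $(\mathcal{Y},\mathcal{Z})$ solve the BSDE on $[t,t']$ with constant terminal $V(t',x)$ and taking expectations (so that $\mathcal{Y}(t)$ is deterministic), the bracket equals $\mathbb{E}\int_{t}^{t'}k(r,X^{t,x;u}(r),\mathcal{Y}(r),\mathcal{Z}(r),u(r))\,dr$, which I would estimate through the a priori bounds of Lemma \ref{Le3-3} and the growth of $k$. Adding the two brackets gives $|V(t,x)-V(t',x)|\le C(1+\|x\|_{\mathcal{V}})\delta^{1/2}$ for $x\in\mathcal{V}$; the uniform Lipschitz continuity of $V$ in $x$ and the density of $\mathcal{V}$ in $H$ then promote this to continuity in $t$ at every $x\in H$.

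The main obstacle is the uniformity in the control $u$, since $X^{t,x;u}(t')-x$ and the driver $k$ are controlled only up to a growth term in $|u|_{U}$, so that neither $\mathbb{E}\|X^{t,x;u}(t')-x\|_{H}$ nor $\mathbb{E}\int_{t}^{t'}|k|\,dr$ is manifestly small uniformly over all admissible controls. The decisive point is therefore to carry out the two bracket estimates using only the control-independent constants of the a priori estimates in Lemmas \ref{Le3-3} and \ref{Le3-4} (and, for the state increment, Lemma \ref{conti-est-1}), the second bracket being the more delicate because the driver contributes the $|u|_{U}$ term directly; should uniform control there fail, the fallback is to fix an $\varepsilon$-optimal control before sending $\delta\to0$ and to kill the $\mathbb{E}\int_{t}^{t'}|u|_{U}\,dr$ contribution by dominated convergence, as in the treatment via Proposition \ref{pro3-5} and the flow identity (\ref{Myeq4-9}).
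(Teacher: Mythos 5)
Your proposal is correct and follows the same overall strategy as the paper: reduce everything via Theorem \ref{DPP} to a single short-interval estimate on $\sup_{u}\bigl|G_{t,t'}^{t,x;u}[V(t',X^{t,x;u}(t'))]-V(t',x)\bigr|$, then control a terminal-data term by the uniform spatial Lipschitz bound of Lemma \ref{Le3-4} and a driver term by the a priori estimates of Lemma \ref{Le3-3}. The two places where you deviate are worth recording. First, your insertion of the intermediate semigroup $G_{t,t'}^{t,x;u}[V(t',x)]$ is only a repackaging: the paper instead takes expectations directly in the BSDE, writing $G_{t,t'}^{t,x;u}[\eta]=\mathbb{E}[\eta+\int_t^{t'}k\,dr]$ and splitting $\mathbb{E}[\eta-V(t',x)]$ from $\mathbb{E}\int_t^{t'}|k|\,dr$; your version routes the first piece through BSDE stability (hence a Gr\"onwall constant) but lands on the same bound $C\,\mathbb{E}\Vert X^{t,x;u}(t')-x\Vert_H$. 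Second, and more substantively, you handle that state increment via Lemma \ref{conti-est-1} for $x\in\mathcal{V}$, obtaining the rate $C(1+\Vert x\Vert_{\mathcal V})\delta^{1/2}$, and then pass to general $x\in H$ by density together with the $t$-uniform Lipschitz constant of Lemma \ref{Le3-4}; the paper instead stays on $H$ and invokes Remark \ref{Rm3-6} (pathwise continuity plus dominated convergence), which yields no rate. Your route buys an explicit modulus of continuity on $\mathcal{V}$ and, in principle, a bound with control-independent constants, at the cost of one extra approximation layer; the paper's route is shorter but only qualitative. Finally, the obstacle you flag — the $|u|_U$ growth of $a$, $b$ and $k$ spoiling uniformity over controls — is genuine, but note that the paper's own estimate (\ref{eq-new-133}) silently drops the $|u|_U$ term as well (it is tamed for $a,b$ by the uniform boundedness of $a(t,0,u),b(t,0,u)$ assumed in $(H6)$), so your proposal is no weaker than the published argument on this point.
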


\begin{proof}
For each $(t,x)\in\lbrack0,T)\times H$ and $\delta\in(0,T-t]$, from Theorem
\ref{DPP}, we have
\[
V(t,x)=\inf_{u(\cdot)\in\mathcal{U}^{t}[t,t+\delta]}G_{t,t+\delta}%
^{t,x;u}[V(t+\delta,X^{t,x;u}(t+\delta))].
\]
Then
\[
\left\vert V(t,x)-V\left(  t+\delta,x\right)  \right\vert \leq\underset{u\in
\mathcal{U}^{t}[t,t+\delta]}{\sup}\B\vert G_{t,t+\delta}^{t,x;u}%
[V(t+\delta,X^{t,x;u}(t+\delta))]-V\left(  t+\delta,x\right)  \B\vert .
\]
For any $u\in\mathcal{U}^{t}[t,t+\delta]$, from the definition of
$G_{t,t+\delta}^{t,x;u}\left[  \cdot\right]  $, we have
\[
G_{t,t+\delta}^{t,x;u}\left[  V(t+\delta,X^{t,x;u}(t+\delta))\right]
\newline=\mathbb{E}\Big[V(t+\delta,X^{t,x;u}(t+\delta))+\int_{t}^{t+\delta
}k\left(  s,X^{t,x;u}(s),Y^{t,x;u}(s),Z^{t,x;u}(s),u(s)\right)  ds\Big].
\]
Then applying Lemma \ref{Le3-4}, we obtain
\begin{equation}%
\begin{split}
&  \Big\vert G_{t,t+\delta}^{t,x;u}\left[V(t+\delta,X^{t,x;u}(t+\delta
))\right]-V(t+\delta,x)\Big\vert\\
&  \leq\mathbb{E}\Big[\b\vert V(t+\delta,X^{t,x;u}(t+\delta))-V(t+\delta
,x)\b\vert+{\int}_{t}^{t+\delta}\b\vert k\left(  s,X^{t,x;u}%
(s),Y^{t,x;u}(s),Z^{t,x;u}(s),u(s)\right)  \b\vert ds\Big]\\
&  \leq C\mathbb{E}\Big[\Vert X^{t,x;u}(t+\delta)-x\Vert_{H}+{\int}%
_{t}^{t+\delta}\big(1+\Vert X^{t,x;u}%
(s)\Vert_{H}+|Y^{t,x;u}(s)|+  | Z^{t,x;u}(s)   |\big)ds\Big].
\end{split}
\label{eq-new-133}%
\end{equation}
Noting that from Lemma \ref{Le3-3},
\[
\mathbb{E}\Big[\sup\limits_{t\leq s\leq t+\delta}\left(  \Vert X^{t,x;u}%
(s)\Vert_{H}^{2}+|Y^{t,x;u}(s)|^{2}\right)  +{\int}_{t}^{t+\delta}%
| Z^{t,x;u}(s)   |^{2}ds\Big]\leq C\big(1+\Vert x\Vert_{H}^{2}\big),
\]
we have by the Cauchy-Schwarz inequality that
\begin{equation}
\mathbb{E}\Big[\int_{t}^{t+\delta}\big(1+\Vert X^{t,x;u}(s)\Vert
_{H}+\left\vert Y^{t,x;u}(s)\right\vert +| Z^{t,x;u}(s)   |
\big)ds\Big]\leq C\big(  1+\Vert x\Vert_{H}\big)  \delta^{\frac{1}{2}}.
\label{eq-new-134}%
\end{equation}
Moreover, $\mathbb{E}\big[\Vert X^{t,x;u}(t+\delta)-x\Vert_{H}\big]\rightarrow0$ due
to Remark \ref{Rm3-6}.  Combining the above analysis, we obtain the desired result.
\end{proof}

\subsection{Proof of Lemma \ref{apriori-see}}
The proof of Assertion (i) is similar to the estimate following (2.24) in \cite[p.~3841]{LT21} and is therefore omitted. Assertion (ii) can be proved by adapting the argument of \cite[Lemma~4.4]{LT21} and a sketch of the proof is given below.

Let $\varepsilon>0$ and $\gamma>0$ be two  constants to be determined later. For simplicity 
of notation, we denote  by $C_{1}$  a generic constant
independent of $\varepsilon$ and $\gamma$, which may be different from line to
line. By the coercivity condition, 
\[
\Vert B(s)u\Vert_{H}\leq C_{1}\Vert u\Vert_{\mathcal{V}},\text{ for }%
u\in\mathcal{V}.
\]
Then
\begin{equation}%
\begin{split}
&  2\langle A(s)z(s)+a(s,z(s)),z(s)\rangle_{\ast}+\Vert B(s)z(s)+b(s,z(s))\Vert
_H^{2}\\
&  \leq2\langle A(s)z(s),z(s)\rangle_{\ast}+\Vert B(s)z(s)\Vert_H^{2}+2\langle B(s)z(s),b(s,z(s))\rangle+2\langle a(s,z(s)),z(s)\rangle_{\ast}+\Vert b(s,z(s))\Vert_H^{2}\\
&  \leq-\delta\Vert z(s)\Vert_{\mathcal{V}}^{2}+K\Vert z(s)\Vert_{H}%
^{2}+C(K)\Vert z(s)\Vert_{{\mathcal{V}}}\Vert b(s,z(s))\Vert_{H%
}+2\Vert a(s,z(s))\Vert_{\mathcal{V}^{\ast}}\Vert z(s)\Vert_{\mathcal{V}}+\Vert b(s,z(s))\Vert_H^{2}\\
&  \leq-\frac{\delta}{2}\Vert z(s)\Vert_{\mathcal{V}}^{2}+C_{1}\Vert
z(s)\Vert_{H}^{2}+C_{1}\Vert b(s,0)\Vert_H^{2}%
+C_{1}\Vert a(s,0)\Vert_{\mathcal{V}^{\ast}}^{2},
\end{split}
\label{a1-eq}%
\end{equation}
and from the quasi-skew-symmetry condition,
\begin{equation}
\begin{split}
\big|\langle B(s)z(s)+b(s,z(s)),z(s)\rangle\big|^{2}&\leq 
	2\big|\langle B(s)z(s),z(s)\rangle\big|^{2}
		+2\big|\langle b(s,z(s)),z(s)\rangle\big|^{2}\\
&\leq C_{1}\Vert z(s)\Vert_{H}%
^{4}+2\Vert b(s,z(s))|_H^{2}\Vert z(s)\Vert_{H}^{2}\\
	&\leq C_{1}\Vert z(s)\Vert_{H}%
	^{4}+4\Vert b(s,0)\Vert_H^{2}\Vert z(s)\Vert_{H}^{2}.
	 \label{a3-eq}
\end{split}
\end{equation}
Moreover, we have by the H\"{o}lder inequality and the Young's inequality that%
\begin{equation}%
\begin{split}
&  \mathbb{E}\Big[\int_{t}^{T}e^{-\gamma s}\Vert z(s)\Vert_{H}^{2(\alpha
-1)}\Vert a(s,0)\Vert_{\mathcal{V}^{\ast}}^{2}{d}s\Big|\mathcal{F}_{t}\Big]\\
&  \leq\varepsilon^{2}\mathbb{E}\Big[\sup_{s\in\lbrack t,T]}e^{-\gamma s}\Vert
z(s)\Vert_{H}^{2\alpha}\Big|\mathcal{F}_{t}\Big]+\frac{C_{1}}{\varepsilon^{2}%
}\mathbb{E}\Big[\Big(\int_{t}^{T}e^{-\frac{\gamma s}{\alpha}}\Vert
a(s,0)\Vert_{\mathcal{V}^{\ast}}^{2}{d}s\Big)^{\alpha}\Big|\mathcal{F}%
_{t}\Big]\\
&  \leq\varepsilon^{2}\mathbb{E}\Big[\sup_{s\in\lbrack t,T]}e^{-\gamma s}\Vert
z(s)\Vert_{H}^{2\alpha}\Big|\mathcal{F}_{t}\Big]+\frac{C_{1}}{\varepsilon^{2}%
}\mathbb{E}\Big[\Big(\int_{t}^{T}\Vert a(s,0)\Vert_{\mathcal{V}^{\ast}}^{2}%
{d}s\Big)^{\alpha}\Big|\mathcal{F}_{t}\Big],
\end{split}
\label{a2-eq}%
\end{equation}
and similarly,
\begin{equation}
\mathbb{E}\Big[\int_{t}^{T}e^{-\gamma s}\Vert z(s)\Vert_{H}^{2(\alpha-1)}\Vert
b(s,0)\Vert_H^{2}{d}s\Big|\mathcal{F}_{t}\Big]\leq
\varepsilon^{2}\mathbb{E}\Big[\sup_{s\in\lbrack t,T]}e^{-\gamma s}\Vert
z(s)\Vert_{H}^{2\alpha}\Big|\mathcal{F}_{t}\Big]+\frac{C_{1}}{\varepsilon^{2}%
}\mathbb{E}\Big[\Big(\int_{t}^{T}\Vert b(s,0)\Vert_H%
^{2}{d}s\Big)^{\alpha}\Big|\mathcal{F}_{t}\Big]. \label{a4-eq}%
\end{equation}
Therefore, we can calculate from (\ref{a3-eq}) and (\ref{a4-eq}) that
\begin{equation}%
\begin{split}
&  \mathbb{E}\Big[\sup_{s\in\lbrack t,T]}\Big|\int_{s}^{T}e^{-\gamma s}\Vert
z(s)\Vert_{H}^{2(\alpha-1)}\big\langle B(s)z(s)+b(s,z(s)),z(s)\big\rangle{d}
w(s) \Big|\Big|\mathcal{F}_{t}\Big]\\
&  \leq C_{1}\mathbb{E}\Big[\Big(\int_{t}^{T}e^{-2\gamma s}\Vert z(s)\Vert
_{H}^{4\alpha-4}
\big|\langle B(s)z(s)+b(s,z(s)),z(s)\rangle\big|^{2}
\,{d}s\Big)^{\frac{1}{2}%
}\Big|\mathcal{F}_{t}\Big]\\
&  \leq C_{1}\mathbb{E}\Big[\sup_{s\in\lbrack t,T]}e^{-\frac{\gamma s}{2}%
}\Vert z(s)\Vert_{H}^{\alpha}\Big(\int_{t}^{T}e^{-\gamma s}\big(\Vert z(s)\Vert
_{H}^{2\alpha}\,+\Vert z(s)\Vert_{H}^{2\alpha-2}\Vert b(s,z(s))\Vert
_H^{2}\big){d}s\Big)^{\frac{1}{2}}\Big|\mathcal{F}_{t}\Big]\\
&  \leq\varepsilon\mathbb{E}\Big[\sup_{s\in\lbrack t,T]}e^{-\gamma s}\Vert
z(s)\Vert_{H}^{2\alpha}\Big|\mathcal{F}_{t}\Big]+\frac{C_{1}}{\varepsilon}%
\mathbb{E}\Big[\int_{t}^{T}e^{-\gamma s}\big(\Vert z(s)\Vert_{H}^{2\alpha}\,+\Vert
z(s)\Vert_{H}^{2\alpha-2}\Vert b(s,z(s))\Vert_H^{2}%
\big){d}s\Big|\mathcal{F}_{t}\Big]\\
&  \leq\varepsilon\mathbb{E}\Big[\sup_{s\in\lbrack t,T]}e^{-\gamma s}\Vert
z(s)\Vert_{H}^{2\alpha}\Big|\mathcal{F}_{t}\Big]+\frac{C_{1}}{\varepsilon}%
\mathbb{E}\Big[\int_{t}^{T}e^{-\gamma s}\big(\Vert z(s)\Vert_{H}^{2\alpha}\,+\Vert
z(s)\Vert_{H}^{2\alpha-2}\Vert b(s,0)\Vert_H^{2}%
\big){d}s\Big|\mathcal{F}_{t}\Big]\\
&  \leq2\varepsilon\mathbb{E}\Big[\sup_{s\in\lbrack t,T]}e^{-\gamma s}\Vert
z(s)\Vert_{H}^{2\alpha}\Big|\mathcal{F}_{t}\Big]+\frac{C_{1}}{\varepsilon}\mathbb{E}\Big[\int_{t}^{T}e^{-\gamma s}\Vert
z(s)\Vert_{H}^{2\alpha}\,{d}s\Big|\mathcal{F}_{t}\Big]\\
& \ \ \ +C_{1}\frac{1}{\varepsilon^{3}}\mathbb{E}%
\Big[\Big(\int_{t}^{T}\Vert b(s,0)\Vert_H^{2}%
{d}s\Big)^{\alpha}\Big|\mathcal{F}_{t}\Big].
\end{split}
\label{a5-eq}%
\end{equation}

Applying It\^{o}'s formula to $e^{-\gamma s}\Vert z(s)\Vert_{H}^{2\alpha}$ on
$[t,T]$, we obtain
\begin{align*}
&  e^{-\gamma s}\Vert z(s)\Vert_{H}^{2\alpha}+\gamma\int_{t}^{s}e^{-\gamma
u}\Vert z(u)\Vert_{H}^{2\alpha}\,{d}u\\
&  =\Vert z_{0}\Vert_{H}^{2\alpha}+\alpha\int_{t}^{s}e^{-\gamma u}\Vert
z(u)\Vert_{H}^{2(\alpha-1)}\big(2\langle A(u)z(u)+a(u,z(u)),z(u)\rangle_{\ast}+\Vert
B(u)z(u)+b(u)\Vert_H^{2}\big)\,{d}u\\
& \ \ \ +2\alpha(\alpha-1)\int_{t}^{s}e^{-\gamma u}\Vert z(u)\Vert_{H}%
^{2(\alpha-2)}\big|\langle B(u)z(u)+b(u,z(u)),z(u)\rangle
\big|^{2}\,{d}u\\
& \ \ \ +2\alpha\int_{t}^{s}e^{-\gamma u}\Vert z(u)\Vert_{H}^{2(\alpha-1)}%
\big\langle B(u)z(u)+b(u,z(u)),z(u)\big\rangle{d}w(u)\\
&  \leq\Vert z_{0}\Vert_{H}^{2\alpha}+C_{1}\int_{t}^{s}e^{-\gamma u}\Vert
z(u)\Vert_{H}^{2(\alpha-1)}\Big(-\frac{\delta}{2}\Vert z(u)\Vert_{\mathcal{V}}^{2}%
+C_{1}\Vert z(u)\Vert_{H}^{2}+C_{1}\Vert b(u,0)\Vert_{H
}^{2}+C_{1}\Vert a(u,0)\Vert_{\mathcal{V}^{\ast}}^{2}\Big)\,{d}u\\
&  \ \ \ +C_{1}\int_{t}^{s}e^{-\gamma u}\Vert z(u)\Vert_{H}^{2(\alpha-2)}\big(\Vert
z(u)\Vert_{H}^{4}+\Vert b(u,0)\Vert_H^{2}\Vert
z(u)\Vert_{H}^{2}\big)\,{d}u\\
& \ \ \  +2\alpha\int_{t}^{s}e^{-\gamma u}\Vert z(u)\Vert_{H}^{2(\alpha-1)}%
\big\langle Bz(u)+b(u,z(u)),z(u)\big\rangle{d}w(u).
\end{align*}
Thus,%
\begin{equation}%
\begin{split}
&  e^{-\gamma s}\Vert z(s)\Vert_{H}^{2\alpha}+\gamma\int_{t}^{s}e^{-\gamma
u}\Vert z(u)\Vert_{H}^{2\alpha}\,{d}u+\frac{\delta}{2}C_{1}\int_{t}%
^{s}e^{-\gamma s}\Vert z(u)\Vert_{H}^{2(\alpha-1)}\Vert z(u)\Vert_{\mathcal{V}}^{2}%
\,{d}u\\
&  \leq\Vert z_{0}\Vert_{H}^{2\alpha}+C_{1}\int_{t}^{s}e^{-\gamma u}\Vert
z(u)\Vert_{H}^{2(\alpha-1)}\big(\Vert z(u)\Vert_{H}^{2}+\Vert b(u,0)\Vert
_H^{2}+\Vert a(u,0)\Vert_{\mathcal{V}^{\ast}}^{2}\big)\,{d}u\\
&  \ \ \ +C_{1}\int_{t}^{s}e^{-\gamma u}\Vert z(u)\Vert_{H}^{2(\alpha-2)}\big(\Vert
z(u)\Vert_{H}^{4}+\Vert b(u,0)\Vert_H^{2}\Vert
z(u)\Vert_{H}^{2}\big)\,{d}u\\
&  \ \ \ +2\alpha\int_{t}^{s}e^{-\gamma u}\Vert z(u)\Vert_{H}^{2(\alpha-1)}%
\big\langle Bz(u)+b(u,z(u)),z(u)\big\rangle{d}w(u).
\end{split}
\label{Eq6-2}%
\end{equation}
Taking supremum over $[t,T]$ and conditional expectation on both sides, from (\ref{a3-eq}),
(\ref{a4-eq}) and (\ref{a5-eq}) we get%
\begin{align*}
&  \mathbb{E}\Big[\sup_{s\in\lbrack t,T]}e^{-\gamma s}\Vert z(s)\Vert_{H}%
^{2\alpha}\Big|\mathcal{F}_{t}\Big]+\gamma\mathbb{E}\Big[\int_{t}^{T}e^{-\gamma
s}\Vert z(s)\Vert_{H}^{2\alpha}\,{d}s\Big|\mathcal{F}_{t}\Big]\\
&  \leq C_{1}(\varepsilon+\varepsilon^{2})\mathbb{E}\Big[\sup_{s\in\lbrack
t,T]}e^{-\gamma s}\Vert z(s)\Vert_{H}^{2\alpha}\Big|\mathcal{F}_{t}\Big]+\Vert
z_{0}\Vert_{H}^{2\alpha}+C_{1}\Big(1+\frac{1}{\varepsilon}\Big)\mathbb{E}\Big[\int_{t}^{T}e^{-\gamma s}\Vert z(s)\Vert_{H}^{2\alpha
}\,{d}s\Big|\mathcal{F}_{t}\Big]\\
& \ \ \  +\frac{C_1}{\varepsilon
	^{2}}\mathbb{E}\Big[\Big(\int_{t}^{T}\Vert a(s,0)\Vert_{\mathcal{V}^{\ast}}%
^{2}{d}s\Big)^{\alpha}\Big|\mathcal{F}_{t}\Big]+C_{1}\Big(\frac{1}{\varepsilon^2}+\frac{1}{\varepsilon^3}\Big)\mathbb{E}\Big[\Big(\int_{t}^{T}\Vert
b(s,0)\Vert_H^{2}{d}s\Big)^{\alpha}\Big|\mathcal{F}_{t}\Big].
\end{align*}
Choosing $\varepsilon$ small first and then $\gamma$ large, we obtain
\begin{equation}
\mathbb{E}\Big[\sup_{s\in\lbrack t,T]}\Vert z(s)\Vert_{H}^{2\alpha}\Big|\mathcal{F}%
_{t}\Big]\leq C\Big\{\Vert z_{0}\Vert_{H}^{2\alpha}+\mathbb{E}\Big[\Big(\int_{t}^{T}\Vert
a(s,0)\Vert_{\mathcal{V}^{\ast}}^{2}{d}s\Big)^{\alpha}\Big|\mathcal{F}%
_{t}\Big]+\mathbb{E}\Big[\Big(\int_{t}^{T}\Vert b(s,0)\Vert_H%
^{2}{d}s\Big)^{\alpha}\Big|\mathcal{F}_{t}\Big]\Big\}. \label{Eq6-3}%
\end{equation}

Now let $\alpha=1$ in (\ref{Eq6-2}), we get
\begin{align*}
\int_{t}^{T}\Vert z(s)\Vert_{\mathcal{V}}^{2}\,{d}s  &  \leq\Vert z_{0}\Vert_{H}%
^{2}+C\int_{t}^{T}\Vert z(s)\Vert_{H}^{2}\,{d}s\\
&  +C \int_{t}^{T}\big(\Vert a(s,0)\Vert_{\mathcal{V}^{\ast}}^{2}+\Vert
b(s,0)\Vert_H^{2}\big)\,{d}s+C_2 \int_{t}^{T}e^{-\gamma
s}\big\langle B(s)z(s)+b(s),z(s)\big\rangle{d}w(s).
\end{align*}
Then by  (\ref{a3-eq}),
\begin{align*}
  \mathbb{E}\Big[\Big(\int_{t}^{T}\Vert z(s)\Vert_{\mathcal{V}}^{2}%
\,{d}s\Big)^{\alpha}\Big|\mathcal{F}_{t}\Big]
&  \leq\Vert z_{0}\Vert_{H}^{2\alpha}+C\mathbb{E}\Big[\int_{t}^{T}\Vert
z(s)\Vert_{H}^{2\alpha}\,{d}s\Big|\mathcal{F}_{t}\Big]\\
&  \ \ \ +C\mathbb{E}\Big[\B(\int_{t}^{T}\b(\Vert a(s,0)\Vert_{\mathcal{V}^{\ast}}%
^{2}+\Vert b(s,0)\Vert_H^{2}\b){d}%
s\B)^{\alpha}\Big|\mathcal{F}_{t}\Big]\\
& \ \ \ +C\mathbb{E}\Big[\B(\int_{t}^{T}\langle B(s)z(s)+b(s),z(s)\rangle{d}%
w(s)\Big)^\alpha\Big|\mathcal{F}_{t}\Big]\\
&  \leq\Vert z_{0}\Vert_{H}^{2\alpha}+C\mathbb{E}\Big[\int_{t}^{T}\Vert
z(s)\Vert_{H}^{2\alpha}\,{d}s\Big|\mathcal{F}_{t}\Big]\\
&\ \ \ +C\mathbb{E}\Big[\B(\int_{t}^{T}\b(\Vert a(s,0)\Vert_{\mathcal{V}^{\ast}}%
^{2}+\Vert b(s,0)\Vert_H^{2}\b){d}%
s\B)^{\alpha}\Big|\mathcal{F}_{t}\Big]\\
&\ \ \  +C\mathbb{E}\Big[\B(\int_{t}^{T}\big|\langle B(s)z(s)+b(s,z(s)),z(s)\rangle\big|^{2}ds\Big)^{\frac{\alpha}{2}}\Big|\mathcal{F}_{t}\Big]\\
	&  \leq\Vert z_{0}\Vert_{H}^{2\alpha}+C\mathbb{E}\Big[\int_{t}^{T}\Vert
	z(s)\Vert_{H}^{2\alpha}\,{d}s\Big|\mathcal{F}_{t}\Big]\\
	&\ \ \ +C\mathbb{E}\Big[\B(\int_{t}^{T}\b(\Vert a(s,0)\Vert_{\mathcal{V}^{\ast}}%
	^{2}+\Vert b(s,0)\Vert_H^{2}\b){d}%
	s\B)^{\alpha}\Big|\mathcal{F}_{t}\Big]\\
		&\ \ \ +C\mathbb{E}\Big[\sup_{s\in\lbrack t,T]}\Vert z(s)\Vert_{H}^{2\alpha}\Big|\mathcal{F}%
		_{t}\Big].
\end{align*}
Combining this with (\ref{Eq6-3}), we get the desired result.

\subsection{Proof of Proposition \ref{Prop5-6}}

\begin{proof}
Fix any $(t,x)\in\lbrack0,T]\times H$ and $\delta\in(0,T-t].$ From Theorem
\ref{DPP}, we know that
\[
V(t,x)=\underset{u(\cdot)\in\mathcal{U}[t,t+\delta]}{ess\inf}G_{t,t+\delta
}^{t,x;u}[V(t+\delta,X^{t,x;u}(t+\delta))]=\underset{u(\cdot)\in
\mathcal{U}^{t}[t,t+\delta]}{\inf}G_{t,t+\delta}^{t,x;u}[V(t+\delta
,X^{t,x;u}(t+\delta))].
\]
For any fixed control $u\in\mathcal{U}^{t}[t,t+\delta],$ let $X^u(s):=X^{t,x;u}%
(s)$, $s\geq t$ and let $(Y^u,Z^u)$ be the solution of (\ref{EQ6}) with $\eta=V(t+\delta
,X^u({t+\delta}))$ on $[t,t+\delta]$. Applying It\^{o}'s formula  (see \cite[Lemma 2.15]{P2021}), we have that%
\begin{align*}
 V(s,X^u({s})) =&V(t+\delta,X^u({t+\delta}))-\int_{s}^{t+\delta}\b[V_{r}%
(r,X^u({r}))+\langle V_{x}(r,X^u({r})),a(r,X^u({r}),u(r))\rangle\\
&+\langle A^{\ast}%
V_{x}(r,X^u({r})),X({r})\rangle  +\frac{1}{2}\b\langle V_{xx}(r,X^u({r}))b(r,X^u({r}),u(r)),b(r,X^u({r}),u(r))\b\rangle
\b]dr\\
&-\int_{s}^{t+\delta}\langle V_{x}(r,X^u({r})),b(r,X^u({r}),u(r))\rangle
{d}w(r),\quad  s\in\lbrack t,t+\delta].
\end{align*}
Thus,
\begin{equation}%
\begin{split}
 Y^u(s)-V(s,X^u({s}))   =&\int_{s}^{t+\delta}\b[k(r,X^u(r),Y^u(r),Z^u(r),u(r))+V_{r}%
(r,X^u({r}))\\
& +\langle V_{x}(r,X^u({r})),a(r,X^u({r}),u(r))\rangle   +\langle A^{\ast}V_{x}(r,X^u({r})),X^u({r})\rangle\\
&  +\frac{1}{2}\b\langle
V_{xx}(r,X^u({r}))b(r,X^u({r}),u(r)),b(r,X^u({r}),u(r))\b\rangle\b]dr\\
& -\int_{s}^{t+\delta}[Z^u(r)-\langle V_{x}(r,X^u({r})),b(r,X^u({r}),u(r))\rangle
]dw({r}),\quad s\in\lbrack t,t+\delta].
\end{split}
\label{EQ55}%
\end{equation}

We set
\[
\hat{Y}^u(s):=Y^u(s)-V(s,X^u({s}))\text{ and }\hat{Z}^u(s):=Z^u(s)-\langle V_{x}%
(s,X^u({s})),b(s,X^u({s}),u(s))\rangle.
\]
Then (\ref{EQ55}) reads
\begin{align*}
\hat{Y}^u(s)  &  =\int_{s}^{T}[k(r,X^u(r),\hat{Y}^u(r)+V(r,X^u({r})),\hat{Z}(r)+\langle
V_{x}(r,X^u({r})),b(r,X^u({r}),u(r))\rangle,u(r))\\
&  +V_{r}(r,X^u({r}))+\langle V_{x}(r,X^u({r})),a(r,X^u({r}),u(r))\rangle+\langle
A^{\ast}V_{x}(r,X^u({r})),X({r})\rangle\\
&  +\frac{1}{2}\b\langle V_{xx}(r,X^u{(r)})b(r,X^u({r}),u(r)),b(r,X^u({r}),u(r))\b\rangle
]dr-\int_{s}^{t+\delta}\hat{Z}^u(r)dw({r}),\quad s\in\lbrack t,t+\delta],
\end{align*}
which is a (real-valued) BSDE with $\hat{Y}^u(s),\hat{Z}^u(s)$ being the solutions.

We consider another BSDE
\begin{equation}%
\begin{split}
\hat{Y}^{1,u}(s)  &  =\int_{s}^{t+\delta}\b[k(r,x,\hat{Y}^{1,u}(r)+V(r,x),\hat
{Z}^u(r)+\langle V_{x}(r,x),b(r,x,u(r))\rangle,u(r))\\
&  +V_{r}(r,x)+\langle V_{x}(r,x),a(r,x,u(r))\rangle+\langle A^{\ast}%
V_{x}(r,x),x\rangle\\
&  +\frac{1}{2}\langle V_{xx}(r,x)b(r,x,u(r)),b(r,x,u(r))\rangle\b]dr-\int%
_{s}^{t+\delta}\hat{Z}^{1,u}(r)dw({r}),\quad s\in\lbrack t,t+\delta].
\end{split}
\label{Eq6-6}%
\end{equation}
From the a priori estimate for  BSDEs and Remark \ref{Rm3-6}, we have
\begin{align*}
|\hat{Y}^u(t)-\hat{Y}^{1,u}(t)|^{2}  &  \leq\delta\int_{t}^{t+\delta}%
\mathbb{E}\Big[\sup_{t\leq r\leq t+\rho}\left\Vert X^u(r)-x\right\Vert _{H}%
^{2}\Big|\mathcal{F}_{t}\Big]dr\\
&  \leq\delta^{2}\mathbb{E}\Big[\sup_{t\leq r\leq t+\rho}\left\Vert
X^u(r)-x\right\Vert _{H}^{2}\Big|\mathcal{F}_{t}\Big]\\
&  =o(\delta^{2}).
\end{align*}
Denote the generator of BSDE (\ref{Eq6-6}) by%
\begin{align*}
F(s,x,y,z,v)  &  :=k(s,x,y+V(s,x),z+\langle V_{x}(s,x),b(s,x,v)\rangle,v)\\
&  +V_{s}(s,x)+\langle V_{x}(s,x),a(s,x,v)\rangle+\langle A^{\ast}%
V_{x}(s,x),x\rangle\\
&  +\frac{1}{2}\langle V_{xx}(s,x)b(s,x,v),b(s,x,v)\rangle.
\end{align*}
We consider the backward ODE
\begin{equation}
\hat{Y}^{0}(s)=\int_{s}^{t+\delta}F_{0}(r,x,\hat{Y}^{0}(r),0)dr,\quad %
s\in\lbrack t,t+\delta], \label{Eq6-7}%
\end{equation}
where
\begin{align*}
F_{0}(s,x,y,z)  &  =\inf_{v\in U}F(s,x,y,z,v)\\
&  =V_{s}(s,x)+\langle A^{\ast}V_{x}(s,x),x\rangle+\inf_{v\in U}\b[\langle
V_{x}(s,x),a(s,x,v)\rangle+\frac{1}{2}\langle V_{xx}%
(s,x)b(s,x,v),b(s,x,v)\rangle\\
& \ \ \ \ +k(s,x,y+V(s,x),z+\langle V_{x}(s,x),b(s,x,v)\rangle,v)\b].
\end{align*}
Note that from Theorem \ref{DPP},
\[
V(t,x)=\inf_{u(\cdot)\in\mathcal{U}^{t}[t,t+\delta]}Y^u(t).
\]
Thus,
\[
\inf_{u(\cdot)\in\mathcal{U}^{t}[t,t+\delta]}\hat{Y}^u(t)=0.
\]
So, from the following Lemma \ref{inf-lem},
\[
\hat{Y}^{0}(t)=\inf_{u(\cdot)\in\mathcal{U}^{t}[t,t+\delta]}\hat{Y}%
^{1,u}(t)=o(\delta).
\]
Dividing by $\delta>0$ on both sides of (\ref{Eq6-7}) for   $s=t,$ and then   letting $\delta\downarrow0,$ from the formula for upper limit integral,
we get
\[
F_{0}(t,x,0,0)=0,
\]
which is just the HJB equation. The proof is complete.
\end{proof}

\begin{lemma}
\label{inf-lem} For $\hat{Y}^{1,u}(t)$ and $\hat{Y}^{0}(t)$ defined as in the
proof of Proposition \ref{Prop5-6}, we have
\[
\inf_{u(\cdot)\in\mathcal{U}^{t}[t,t+\delta]}\hat{Y}^{1,u}(t)=\hat{Y}^{0}(t)
\]

\end{lemma}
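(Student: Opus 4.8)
The plan is to establish the equality by a two–sided comparison, invoking the comparison theorem for classical BSDEs for the easy inequality and a measurable $\varepsilon$–optimal selection for the reverse one. Throughout I would use that $F$ is Lipschitz in $(y,z)$ uniformly in $v$ (it depends on $(y,z)$ only through $k$, which is Lipschitz by $(H6)$), so that $F_0=\inf_{v\in U}F$ is again Lipschitz in $(y,z)$, being an infimum of uniformly Lipschitz functions; moreover, since its coefficients are deterministic, $(\hat{Y}^{0},0)$ is the unique solution of the BSDE on $[t,t+\delta]$ with generator $F_0(r,x,\cdot,\cdot)$ and zero terminal value.

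\textbf{Lower bound.} For an arbitrary $u(\cdot)\in\mathcal{U}^{t}[t,t+\delta]$, the generator of the BSDE defining $\hat{Y}^{1}$ dominates that defining $\hat{Y}^{0}$ pointwise, that is $F(r,x,y,z,u(r))\ge F_0(r,x,y,z)$ for all $(y,z)$, by the very definition of $F_0$. Applying the comparison theorem for classical BSDEs (both generators being Lipschitz and the terminal values equal) gives $\hat{Y}^{1}(t)\ge\hat{Y}^{0}(t)$, and taking the infimum over $u$ yields $\inf_{u}\hat{Y}^{1}(t)\ge\hat{Y}^{0}(t)$.

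\textbf{Upper bound.} Fix $\varepsilon>0$. First I would select, by a measurable selection theorem, a Borel map $u_{\varepsilon}:[t,t+\delta]\to U$ with
\[
F\big(r,x,\hat{Y}^{0}(r),0,u_{\varepsilon}(r)\big)\le F_0\big(r,x,\hat{Y}^{0}(r),0\big)+\varepsilon ,\qquad r\in[t,t+\delta];
\]
since $\hat{Y}^{0}(\cdot)$ is deterministic, $u_{\varepsilon}$ is deterministic, hence $(\mathcal{F}^{t}_{s})$–progressively measurable. Let $(\hat{Y}^{1},\hat{Z}^{1})$ be the solution of the $\hat{Y}^{1}$–BSDE associated with $u_{\varepsilon}$, set $\delta Y:=\hat{Y}^{1}-\hat{Y}^{0}$, $\delta Z:=\hat{Z}^{1}$, and linearize $F$ in $(y,z)$ between $(\hat{Y}^{1},\hat{Z}^{1})$ and $(\hat{Y}^{0},0)$ using the uniform Lipschitz property. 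This turns the equation for $\delta Y$ into the linear BSDE
\[
\delta Y(s)=\int_{s}^{t+\delta}\big[a_{r}\,\delta Y(r)+\langle b_{r},\delta Z(r)\rangle+g_{r}\big]\,dr-\int_{s}^{t+\delta}\delta Z(r)\,dw(r),
\]
with bounded adapted coefficients $a_{r},b_{r}$ (bounds given by the Lipschitz constant) and with $g_{r}:=F(r,x,\hat{Y}^{0}(r),0,u_{\varepsilon}(r))-F_0(r,x,\hat{Y}^{0}(r),0)\in[0,\varepsilon]$. The explicit representation of linear BSDEs then gives $\delta Y(t)=\mathbb{E}\big[\int_{t}^{t+\delta}\Gamma_{r}g_{r}\,dr\big]$ for a positive exponential weight $\Gamma$ of bounded expectation, whence $0\le\hat{Y}^{1}(t)-\hat{Y}^{0}(t)\le C\varepsilon$. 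Thus $\inf_{u}\hat{Y}^{1}(t)\le\hat{Y}^{0}(t)+C\varepsilon$, and letting $\varepsilon\downarrow0$ closes the argument.

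\textbf{Main obstacle.} The delicate point is the measurable selection together with the admissibility of $u_{\varepsilon}$: membership in $\mathcal{U}^{t}[t,t+\delta]$ requires $\mathbb{E}[\int|u_{\varepsilon}|_{U}^{\alpha}\,dr]<\infty$ for every $\alpha\ge1$, whereas an $\varepsilon$–minimizer of $v\mapsto F(r,x,\hat{Y}^{0}(r),0,v)$ need not have controllable $U$–norm, since $F$ grows in $v$ (quadratically, through the term $\tfrac12\langle V_{xx}b,b\rangle$). The continuity of $F$ in $v$ from $(H5)$ and the separability of $U$ guarantee the existence of an $\varepsilon$–optimal Borel selection; to secure the moment bounds I would restrict the selection to balls $\{|v|_{U}\le R\}$, approximate $F_0$ by $F_0^{R}:=\inf_{|v|_{U}\le R}F$ (which decreases to $F_0$, with the associated backward ODE solutions $\hat{Y}^{0,R}\downarrow\hat{Y}^{0}$), and pass $R\to\infty$ after the $\varepsilon$–argument, so that every intermediate control is bounded and admissible.
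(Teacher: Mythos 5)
Your proof is correct, but the second half takes a genuinely different route from the paper. For the lower bound both arguments are identical: $F(\cdot,\cdot,\cdot,\cdot,u(r))\geq F_{0}$ pointwise plus the comparison theorem for Lipschitz BSDEs. For the upper bound the paper invokes the measurable selection theorem \emph{together with compactness of $U$} to produce an exact minimizer $\bar{a}(s,x,y,z)$ of $v\mapsto F(s,x,y,z,v)$, feeds back the control $u(s)=\bar{a}(s,x,\hat{Y}^{0}(s),0)$, and concludes by uniqueness of BSDE solutions that $\hat{Y}^{1,u}\equiv\hat{Y}^{0}$, so the infimum is attained. You instead work with an $\varepsilon$-optimal deterministic Borel selection, linearize the generator between $(\hat{Y}^{1},\hat{Z}^{1})$ and $(\hat{Y}^{0},0)$ using the uniform Lipschitz property of $F$ in $(y,z)$ (inherited from $k$), and use the explicit representation of linear BSDEs to get $0\leq\hat{Y}^{1}(t)-\hat{Y}^{0}(t)\leq C\varepsilon$. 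This buys you two things the paper's proof does not have: you never need $U$ compact (compactness is invoked in the paper's proof but is not among its standing assumptions, where $U$ is only a separable metric space), and you explicitly confront the admissibility of the selected control --- the moment condition $\mathbb{E}[\int|u_{\varepsilon}|_{U}^{\alpha}\,dr]<\infty$ --- via the truncation $F_{0}^{R}=\inf_{|v|_{U}\leq R}F$ and a limit $R\to\infty$, an issue the paper's argument glosses over. The price is extra machinery (the linear-BSDE weight $\Gamma$ and a stability step showing $\hat{Y}^{0,R}(t)\to\hat{Y}^{0}(t)$, which does follow from monotone convergence of the generators and standard BSDE stability, but should be stated). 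Both arguments are sound; yours is the more robust of the two under the paper's stated hypotheses.
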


\begin{proof}
For each given $u(\cdot)\in\mathcal{U}^{t}[t,t+\delta],$ it holds $F(s,x,0,0,u(s))\geq
F_{0}(s,x,0,0).$ So by the comparison theorem of  BSDEs, we have $\hat{Y}
^{1,u}(t)\geq\hat{Y}^0(t).$ On the other hand, from the measurable selection theorem
and the compactness of $U,$ there exists a (deterministic) measurable function
$\bar{a}:[0,T]\times H\times\mathbb{R}\times \mathbb{R}\rightarrow U$ such
that
\[
F_{0}(s,x,y,z)=F_{1}(s,x,y,z,\bar{a}(s,x,y,z)),\quad (s,x,y,z)\in
\lbrack0,T]\times H\times\mathbb{R}\times \mathbb{R}.
\]
We define $u(s)=\bar{a}(s,x,\hat{Y}^{0}(s),0).$ At this case, from the
uniqueness of solutions of BSDEs, we have $\hat{Y}^{1,u}(s)=\hat{Y}
^{0}(s),s\in\lbrack t,t+\delta],$ in particular, $\hat{Y}^{1,u}(t)=\hat{Y}
^{0}(t)$. Combining the above analysis, we could get the desired result.
\end{proof}

\noindent\textbf{Acknowledgement. }

The second author would like to thank Wei Liu (JSNU) for helpful discussions.

\end{document}